\documentclass[11pt]{amsart}

\usepackage{amsfonts, amstext, amsmath, amsthm, amscd, amssymb}

\usepackage{graphicx, color}

\usepackage{microtype}

\usepackage[hidelinks,pagebackref,pdftex]{hyperref}

\usepackage[margin=3cm]{geometry}

\newtheorem{theorem}{Theorem}[section]
\newtheorem{proposition}[theorem]{Proposition}
\newtheorem{lemma}[theorem]{Lemma}
\newtheorem{corollary}[theorem]{Corollary}

\newtheorem*{namedtheorem}{\theoremname}
\newcommand{\theoremname}{testing}

\theoremstyle{definition}
\newtheorem{definition}[theorem]{Definition}

\newtheorem{remark}[theorem]{Remark}

\title[]{Lorenz Links that are not Horseshoe and Rossler Links}
\author{Thiago de Paiva}
\address[]{Beijing International Center for Mathematical Research, Peking University, Beijing 100871, China P.R.}
\email[]{thhiagodepaiva@gmail.com}
\author{Yi Liu}
\address{Beijing International Center for Mathematical Research, Peking University, Beijing 100871, China P.R.}
\email{liuyi@bicmr.pku.edu.cn}

\begin{document}

\begin{abstract}
We compare the periodic orbit types of the Lorenz, horseshoe, and R\"ossler
systems through their associated templates. Lorenz links are carried by the
Lorenz template, while horseshoe links are carried by the horseshoe template.
For the standard template model considered here, the R\"ossler template is
identified, up to inversion symmetry, with the template of the horseshoe
mechanism. Thus, in this paper, R\"ossler links and horseshoe links are treated
as belonging to the same template class.

We show that the relationship between Lorenz links and horseshoe/R\"ossler
links has two complementary sides. First, we prove that the overlap between
the two families is nontrivial by constructing infinite families of
links which can be embedded in both templates. This verifies, for these families, a conjecture stated by Kofman 
that horseshoe links should also be Lorenz links. 
On the other hand, we prove that the two
families are far from being the same. Holmes and Williams showed that many
Lorenz torus knots cannot be embedded in the horseshoe template: if the torus
knot \(T(p,q)\), with \(p<q\), is a horseshoe knot, then \(3p\leq 2q\).
We show that this phenomenon is much broader. We extend the Holmes--Williams
obstruction from torus knots to torus links, and we construct infinitely many
hyperbolic Lorenz knots and links, as well as infinitely many satellite Lorenz
knots and links, which cannot be embedded in the horseshoe template.
Consequently, these examples are Lorenz links which are not horseshoe links
and hence not R\"ossler links.
\end{abstract}

\maketitle

\section{Introduction}

One of the classical ways of studying chaotic dynamical systems is through
the knot and link types of their periodic orbits. In several important
examples, these periodic orbits are carried by templates, that is, embedded
branched surfaces endowed with a semiflow. Thus a dynamical problem about
periodic orbits can be translated into a topological problem about links
embedded in a template.

The three motivating examples in this paper are the Lorenz system, the
suspension of Smale's horseshoe map, and the R\"ossler system. We do not study
these systems for arbitrary parameter values. Rather, we consider the
classical chaotic regimes in which their periodic orbits are modeled by
templates.

The Lorenz system is the flow in \(\mathbb R^3\) defined by
\[
\dot{x}=\sigma(y-x),\qquad
\dot{y}=x(\rho-z)-y,\qquad
\dot{z}=xy-\beta z.
\]
For the classical chaotic parameters, such as
\[
\sigma=10,\qquad \rho=28,\qquad \beta=\frac{8}{3},
\]
the periodic orbits on the Lorenz attractor are modeled by the Lorenz
template; see Figure~\ref{LT}. Links carried by this template are called
Lorenz links.

The study of Lorenz links has its roots in the foundational work of Birman
and Williams \cite{periodicorbits,birman1983knotted}, who showed that the
periodic orbits of the Lorenz system can be studied using the Lorenz template.
Birman and Kofman later gave a braid-theoretic description of Lorenz links in
terms of T-links \cite{newtwis}. Further developments on Lorenz links can
be found in
\cite{dePaivaPurcell2024,de2022torus,dePaiva2025,de2021satellites,unexpected}.

The Smale horseshoe is a diffeomorphism of the disk with a chaotic invariant
set. By taking a suspension, one obtains a flow whose periodic orbits are
modeled by the horseshoe template; see Figure~\ref{HT}. Links carried by
this template are called horseshoe links. The braid types of periodic orbits
in the horseshoe template have been studied extensively. In particular,
de Carvalho and Hall studied forcing relations, conjugacies, and decoration
invariants for horseshoe braids
\cite{deCarvalhoHall2002Forcing,deCarvalhoHall2003Conjugacies,
deCarvalhoHall2010Decoration}.

The R\"ossler system is the flow in \(\mathbb R^3\) defined by
\[
\dot{x}=-y-z,\qquad
\dot{y}=x+ay,\qquad
\dot{z}=b+z(x-c).
\]
For classical chaotic parameter values, the topology of its periodic orbits
can be described by a template, usually called the R\"ossler template
\cite{LetellierDutertreMaheu1995}. For recent work on knots and chaos in the
R\"ossler system, see \cite{Igra2025Rossler}. For the standard template model
considered in this paper, the R\"ossler template is identified, up to inversion
symmetry, with the template of the horseshoe mechanism
\cite{LetellierGouesbet1996,RosalieLetellier2013}. Therefore, throughout this
paper, R\"ossler links and horseshoe links are treated as belonging to the same
template class.

From this perspective, comparing Lorenz links with horseshoe and R\"ossler
links is a way of comparing the periodic orbit types of the corresponding
systems. If two flows are orbit equivalent, then there is a homeomorphism
between their ambient manifolds which sends orbits of one flow to orbits of
the other. Hence periodic orbits are sent to periodic orbits, and complements
of periodic orbit links are sent to complements of periodic orbit links.
Therefore, differences between the link types realized by two templates give
topological obstructions to orbit equivalence. This provides one of the
motivations for studying which Lorenz links can, or cannot, be realized in
the horseshoe template.

More generally, whenever two natural families of knots and links arise from
dynamical systems, a basic problem is to compare their link spectra. One would
like to understand which links belong to both families, which links belong to
one family but not the other. Thus the comparison between Lorenz links and horseshoe/R\"ossler links
fits into a broader classification problem for links carried by templates.

We use the following terminology. A link \(L\subset S^3\) is called a
\emph{Lorenz link}, \emph{horseshoe link}, or \emph{R\"ossler link} if it is
carried by the Lorenz, horseshoe, or R\"ossler template, respectively; that is,
if it can be realized as the link of a finite collection of periodic orbits on
the corresponding template.

Throughout the paper, links are considered up to ambient isotopy in \(S^3\).
Thus two links are said to be equivalent if there is an ambient isotopy of
\(S^3\) taking one link to the other.

For the standard template model considered in this paper, the R\"ossler
template is identified, up to inversion symmetry, with the template of the
horseshoe mechanism. Thus R\"ossler links and horseshoe links are treated here
as belonging to the same template class. We therefore use the terms
\emph{R\"ossler link} and \emph{horseshoe link} interchangeably in this sense.

A natural problem is to compare the periodic orbit types of these systems. In
particular, one may ask which Lorenz links can also be embedded in the
horseshoe template, and conversely which horseshoe links can be realized on
the Lorenz template. In his lecture notes, Kofman conjectured that every
horseshoe link, equivalently every R\"ossler link in the setting considered
here, is a Lorenz link \cite{KofmanHorseshoeBeamer}. Thus the comparison
between Lorenz links and horseshoe/R\"ossler links is not only a question of
separating the two families, but also a question of understanding the possible
overlap between them.

In this paper, we show that the relationship between these two families has
two complementary sides. On one hand, the overlap is nontrivial: we prove
Kofman's conjecture for an infinite family of horseshoe/R\"ossler links by showing
that these links are also \(V\)-links and hence Lorenz links. On the other
hand, the two families are far from being the same. Extending the classical
obstruction of Holmes and Williams for torus knots
\cite{HolmesWilliams1985}, we construct large classes of Lorenz links,
including torus links, hyperbolic links, and satellite links, which cannot be
embedded in the horseshoe template, and hence cannot be realized as R\"ossler
links.

Our approach is braid-theoretic. Lorenz links admit positive minimal braid
representatives called \(V\)-braids \cite{de2024lorenz}. We prove the
corresponding result for horseshoe links: their positive minimal braid
representatives are precisely \(W\)-braids. Thus the problem of comparing
Lorenz links with horseshoe links can be studied through the comparison
between \(V\)-braids and \(W\)-braids.

We now recall the notation for \(V\)-braids. Let \(p,q\) be positive integers
with \(2\leq p\leq q\). Let
\[
2\leq u_1<\cdots<u_m\leq p
\qquad\text{and}\qquad
2\leq r_1<\cdots<r_n<p
\]
be possibly empty sequences, and let
\[
v_1,\dots,v_m,s_1,\dots,s_n
\]
be positive integers. The associated \(V\)-braid is the positive braid on
\(p\) strands
\[
\begin{aligned}
\beta_V={}&
(\sigma_{p-1}\sigma_{p-2}\cdots\sigma_{p-u_1+1})^{v_1}
\cdots
(\sigma_{p-1}\sigma_{p-2}\cdots\sigma_{p-u_m+1})^{v_m} \\
&\cdot
(\sigma_1\sigma_2\cdots\sigma_{r_1-1})^{s_1}
\cdots
(\sigma_1\sigma_2\cdots\sigma_{r_n-1})^{s_n}
(\sigma_1\sigma_2\cdots\sigma_{p-1})^q.
\end{aligned}
\]
If one of the sequences is empty, the corresponding product is omitted. Since
\(q\geq p\), the final torus block contains at least one full twist on the
\(p\) strands.

The closure of \(\beta_V\) is denoted by
\[
V((u_1,\overline{v_1}),\dots,(u_m,\overline{v_m}),
(r_1,s_1),\dots,(r_n,s_n),(p,q)).
\]
The bar over \(v_i\) indicates that the corresponding block is a decreasing
block, whereas the unbarred pairs \((r_j,s_j)\) denote increasing blocks.

We now introduce the class of \(W\)-braids, which will play the role of
minimal braid representatives for horseshoe links. Let \(m\geq 2\), and let
\(s_2,\dots,s_m\) be positive integers with
\[
s_m\geq m.
\]
Let
\[
m\geq w_1>\cdots>w_k\geq 2
\]
be a possibly empty decreasing sequence. If this sequence is nonempty, assume
also that
\[
s_m>w_1.
\]
A \(W\)-braid is a positive braid on \(m\) strands of the form
\[
\begin{aligned}
\beta_W={}&
(\sigma_1\sigma_2\cdots\sigma_{w_1-1})
\cdots
(\sigma_1\sigma_2\cdots\sigma_{w_k-1}) \\
&\cdot
(\sigma_1)^{s_2}
(\sigma_2\sigma_1)^{s_3}
\cdots
(\sigma_{m-1}\sigma_{m-2}\cdots\sigma_1)^{s_m}.
\end{aligned}
\]
If the sequence \(w_1,\dots,w_k\) is empty, the initial product is omitted.
Since \(s_m\geq m\), the final decreasing block contains at least one full
twist on the \(m\) strands.

The closure of \(\beta_W\) is called a \(W\)-link and is denoted by
\[
W((w_1,1),\dots,(w_k,1),
(2,\overline{s_2}),\dots,(m,\overline{s_m})).
\]
The initial pairs \((w_i,1)\) denote increasing blocks with exponent one,
whereas the barred pairs \((j,\overline{s_j})\) denote decreasing blocks
\[
(\sigma_{j-1}\sigma_{j-2}\cdots\sigma_1)^{s_j}.
\]

Our first structural result shows that \(W\)-braids give exactly the minimal
braid representatives of horseshoe links.

\begin{theorem}\label{thm:intro-minimal-W}
A minimal braid representative of a horseshoe link is a \(W\)-braid.
Conversely, every \(W\)-braid is a minimal braid representative of a
horseshoe link.
\end{theorem}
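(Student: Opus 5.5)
I will model the argument on the proof that minimal braid representatives of Lorenz links are \(V\)-braids \cite{de2024lorenz}, with the horseshoe template in place of the Lorenz template. The basic tool is the standard coding of periodic orbits on the horseshoe template by words in \(\{x,y\}\), read off in the Birman--Williams way. A finite collection of periodic orbits is then a finite set of cyclic words. Cutting the template along a transversal gives a permutation braid, and the orbit link is its closure. Since the branch containing the half twist contributes positive crossings, this braid is positive. Its strand count equals the total word length, so it is far from minimal, and the work consists of reducing it.

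\textbf{Step 1 (normal form).} I will use the unimodal (kneading) order on orbit points to rewrite the horseshoe braid. The points coded by the twisted branch appear in reversed order. Sliding strands through the branch line then produces decreasing blocks \((\sigma_{j-1}\cdots\sigma_1)\) rather than the increasing blocks of the Lorenz case. Following Birman--Kofman, I will group the points according to the number of consecutive symbols of the twisted type. Conjugation together with Markov destabilisations, which remove the strands belonging to the ``straight'' branch one at a time, should collapse the braid onto \(m\) strands. Here \(m\) is the number of orbit segments passing through the twisted band; it plays the role that \(p\) (the number of \(L\)-letters, or trips) plays for Lorenz links. What remains is a word \((\sigma_1)^{s_2}(\sigma_2\sigma_1)^{s_3}\cdots(\sigma_{m-1}\cdots\sigma_1)^{s_m}\), preceded by increasing blocks of exponent one. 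The exponent-one restriction should come from the fact that in the horseshoe each return passes through the fold only once. I will then verify the inequalities \(s_m\ge m\) and \(s_m>w_1\). These should follow because the outermost strand on the twisted branch must cross every other strand at least once per return, which yields a full twist.

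\textbf{Step 2 (converse realisation).} Given a \(W\)-braid, I will construct an explicit set of words whose braid reduces to it by running Step 1 backwards. Each block \((j,\overline{s_j})\) is assigned \(s_j\) strands with prescribed itineraries, and each \((w_i,1)\) is assigned one extra return. I will check that these words are admissible horseshoe orbits, which is always the case in the full horseshoe, so the only real issue is ordering.

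\textbf{Step 3 (minimality).} This is the step I expect to be the main obstacle. I will apply the Franks--Williams / Morton--Franks--Williams inequality to positive braids containing a full twist. For a positive braid with a full twist on \(m\) strands, the minimal \(v\)-degree of the HOMFLY polynomial forces the braid index to be \(m\). A \(W\)-braid contains a full twist because \(s_m\ge m\), so Franks--Williams gives braid index exactly \(m\). This is the same argument used for \(V\)-braids. Combined with Step 1, which lands on a \(W\)-braid with a full twist, it shows that the reduced braid is minimal. The remaining delicate point is to show that any minimal representative is this one, up to the paper's notion of equivalence. I would argue this by uniqueness of the normal form obtained in Step 1. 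The difficulty is that Step 1 must be carried out carefully to guarantee that the full twist survives the destabilisations; I would try first to establish this by tracking the orbit of maximal kneading order.
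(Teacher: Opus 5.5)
Your minimality step (Franks--Williams applied to a positive braid containing a full twist) is the same as the paper's. Step~1, however, has a genuine gap. The full twist \(s_m\ge m\), the inequality \(s_m>w_1\) and the exponent-one increasing blocks are the whole content of the theorem, and the heuristic you give for them is false.

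What the template analysis actually yields is an \(S\)-braid \(A_1^{s_2}A_2^{s_3}\cdots A_{m-1}^{s_m}\), with \(A_j=\sigma_j\cdots\sigma_1\), \emph{arbitrary} positive exponents and no initial blocks. The paper obtains this from the \(T^1\)-link description of horseshoe links together with the identity \(P_j^a\Delta_j=\Delta_jA_{j-1}^a\). A strand that crosses all the others once per return contributes one factor \(A_{m-1}\) per return, not a full twist. Indeed \(s_m<m\) genuinely occurs. For example, \(A_1A_2^2A_3^2\) is a horseshoe braid on \(4\) strands whose closure is also the closure of \(\sigma_1\cdot A_1A_2^4\), which contains the full twist \(A_2^3\); so its braid index is \(3\). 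Hence careful tracking of the orbit of maximal kneading order will not make a full twist ``survive''. In general there is none at that stage, and it has to be created by a reduction that lowers the strand count further.

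That reduction is the missing idea. Put \(t_j=s_j+\cdots+s_m\) and let \(i\) be the largest index with \(t_i\ge i\). For \(j=m,m-1,\dots,i+1\) one has \(t_j<j\), and one applies the move
\[
\widehat{\beta A_{j-1}^{t}}=\widehat{(\sigma_1\cdots\sigma_{t-1})\,\beta\,A_{j-2}^{t}},\qquad 1<t<j,\quad \beta\in\langle\sigma_1,\dots,\sigma_{j-2}\rangle .
\]
This move destabilises the last strand and then conjugates the leftover factor \(\sigma_{j-t}\cdots\sigma_{j-2}\) through \(A_{j-2}^{t}\); when \(t_m=1\) it is an ordinary destabilisation. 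Each application deletes one strand, creates one increasing block of width \(t_j\) and exponent one on the left, and merges \(A_{j-2}^{t_j}\) with \(A_{j-2}^{s_{j-1}}\) into \(A_{j-2}^{t_{j-1}}\). This single mechanism produces everything you were trying to explain:
\begin{enumerate}
\item the exponent-one blocks, one per deleted strand (not ``one pass through the fold''), with strictly decreasing widths \(t_{i+1}>\cdots>t_m\);
\item the full twist \(t_i\ge i\);
\item the inequality \(t_i=s_i+t_{i+1}>w_1\).
\end{enumerate}
Only after this reduction does Franks--Williams apply, and the braid index is \(i\).

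The converse is this move run backwards. The conditions \(s_m>w_1>\cdots>w_k\) are exactly what make the exponents \(s_m-w_1,\,w_1-w_2,\dots,w_k\) of the rebuilt \(S\)-braid positive, so no orbit-by-orbit realisation is needed.

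Finally, drop the uniqueness argument in Step~3. The theorem only asserts that each horseshoe link \emph{admits} a minimal \(W\)-braid representative; conjugates of \(W\)-braids are minimal but need not be \(W\)-braids. The \(W\)-parameters are not unique anyway: for \(s_3\ge 3\), the braids of \(W((2,1),(2,\overline{s_2}),(3,\overline{s_3}))\) and \(W((2,\overline{s_2+1}),(3,\overline{s_3}))\) coincide.
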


The relationship between \(V\)-links and \(W\)-links is not simply one of
disjointness. Our first comparison result shows that the two families have a
natural infinite intersection. This is motivated by a conjecture of Kofman
that every horseshoe link should be a Lorenz link. The result below verifies Kofman's conjecture
for an infinite family of \(W\)-links by showing that these links can be
rewritten as \(V\)-links, and hence are Lorenz links.

\begin{theorem}\label{thm:intro-V-W-intersection}
Let \(m\geq 2\), and let \(s_2,\dots,s_m\) be positive integers with
\(s_m\geq m\). Then the \(W\)-link
\[
W((2,\overline{2s_2}), (3,\overline{3s_3}), \dots,
(m-1,\overline{(m-1)s_{m-1}}), (m,\overline{s_m}))
\]
is equivalent to the \(V\)-link
\[
V((2,\overline{2s_2}), (3,\overline{3s_3}), \dots,
(m-1,\overline{(m-1)s_{m-1}}),(m,s_m)).
\]
Here the pairs indexed by \(2,\dots,m-1\) are omitted when \(m=2\).
\end{theorem}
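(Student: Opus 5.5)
The plan is to show that the two braids are conjugate in \(B_m\). Closures of conjugate braids are isotopic, so this gives the equivalence of links. The two inputs are the standard identification of the blocks with exponent a multiple of \(j\) as powers of full twists, and the "flip" automorphism of \(B_m\) given by conjugation by the half twist \(\Delta_m\).

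First I would rewrite the \(W\)-braid. For \(2\le j\le m\), let \(\Delta^2_{[1,j]}\) denote the full twist on the first \(j\) strands. The standard identity
\[
(\sigma_{j-1}\sigma_{j-2}\cdots\sigma_1)^{j}=\Delta^2_{[1,j]}
\]
gives \((\sigma_{j-1}\cdots\sigma_1)^{js_j}=\Delta^{2s_j}_{[1,j]}\). Hence the \(W\)-braid in the statement is
\[
\beta_W=\Delta^{2s_2}_{[1,2]}\Delta^{2s_3}_{[1,3]}\cdots\Delta^{2s_{m-1}}_{[1,m-1]}\,(\sigma_{m-1}\sigma_{m-2}\cdots\sigma_1)^{s_m}.
\]
In the same way, let \(\Delta^2_{[m-j+1,m]}\) denote the full twist on the last \(j\) strands. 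The identity
\[
(\sigma_{m-1}\sigma_{m-2}\cdots\sigma_{m-j+1})^{j}=\Delta^2_{[m-j+1,m]}
\]
shows that the \(V\)-braid with \(p=m\) and \(q=s_m\) is
\[
\beta_V=\Delta^{2s_2}_{[m-1,m]}\cdots\Delta^{2s_{m-1}}_{[2,m]}\,(\sigma_1\sigma_2\cdots\sigma_{m-1})^{s_m}.
\]
Its blocks are the decreasing blocks \(u_i=i\), \(v_i=is_i\) for \(2\le i\le m-1\), followed by the torus block \((m,s_m)\). I would also check that the data are admissible. We have \(2\le u_1<\dots<u_{m-2}=m-1\le p\) and \(p=m\le q=s_m\), so this is indeed a \(V\)-braid.

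Next I would apply the automorphism \(\tau\colon\sigma_i\mapsto\sigma_{m-i}\). This is realized by conjugation by \(\Delta_m\), since \(\Delta_m\sigma_i\Delta_m^{-1}=\sigma_{m-i}\). The automorphism \(\tau\) sends \(\sigma_{j-1}\cdots\sigma_1\) to \(\sigma_{m-j+1}\cdots\sigma_{m-1}\). Therefore it sends \(\Delta^2_{[1,j]}\) to \(\Delta^2_{[m-j+1,m]}\); equivalently, one can use \((\sigma_{m-j+1}\cdots\sigma_{m-1})^j=\Delta^2_{[m-j+1,m]}\) as well. It also sends \((\sigma_{m-1}\cdots\sigma_1)^{s_m}\) to \((\sigma_1\cdots\sigma_{m-1})^{s_m}\). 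Applying \(\tau\) factor by factor, in the same order, gives exactly \(\tau(\beta_W)=\beta_V\). Thus \(\beta_V=\Delta_m\beta_W\Delta_m^{-1}\), and the closures coincide. When \(m=2\) there are no intermediate blocks, and the statement reduces to \(\sigma_1^{s_2}\) on both sides.

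The only point requiring care is the full-twist identities. I would verify them by the standard argument: \((\sigma_{j-1}\cdots\sigma_1)\) and \((\sigma_1\cdots\sigma_{j-1})\) are both \(j\)-th roots of the full twist on their \(j\) strands, and a reversed word represents the same element of \(B_j\) up to the anti-automorphism, which fixes \(\Delta^2\). I would also make sure that the grouping of exponents in the stated notation (\(js_j\) rather than \(s_j\)) is exactly what makes each block a power of a full twist. I expect no other obstacle.
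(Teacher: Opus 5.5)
Your proof is correct and follows essentially the paper's argument. The paper cites \cite[Lemma~3.2]{de2024lorenz} to flip the \(V\)-braid into \(\bigl(\prod_{j=2}^{m-1} P_j^{js_j}\bigr)A_{m-1}^{s_m}\) and then applies \(P_j^j=A_{j-1}^j\). You instead carry out the same flip explicitly as conjugation by \(\Delta_m\) and use the same full-twist identities, which makes the argument self-contained.
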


Thus there are infinitely many links which can be embedded in both the Lorenz
template and the horseshoe template. This shows that the comparison between
Lorenz links and horseshoe links is subtler than simply separating two
disjoint classes.

On the other hand, Holmes and Williams proved that the two classes do not
coincide even among torus knots: if the torus knot \(T(p,q)\), with \(p<q\),
is a horseshoe knot, then
\[
3p\leq 2q.
\]
Thus many Lorenz torus knots cannot be realized as horseshoe knots.

In this paper, we show that this difference is much larger than the
Holmes--Williams obstruction suggests. We extend this phenomenon from torus
knots to torus links, and we construct infinitely many hyperbolic Lorenz knots
and links, as well as infinitely many satellite Lorenz knots and links, which
cannot be embedded in the horseshoe template.

Our main tool for detecting \(V\)-links which are not \(W\)-links is the
following crossing-number obstruction.

\begin{theorem}\label{thm:intro-crossing-obstruction}
Let \(p,q\) be integers with \(2\leq p\leq q\), and let
\[
V=
V((u_1,\overline{v_1}),\dots,(u_m,\overline{v_m}),
(r_1,s_1),\dots,(r_n,s_n),(p,q))
\]
be a \(V\)-link, where the sequences \(u_1,\dots,u_m\) and
\(r_1,\dots,r_n\) are allowed to be empty. Whenever they are nonempty, assume
that
\[
2\leq u_1<\cdots<u_m\leq p
\]
and
\[
2\leq r_1<\cdots<r_n<p.
\]
Set
\[
E(V)=
(q-p)(p-1)
+\sum_{i=1}^{m}v_i(u_i-1)
+\sum_{j=1}^{n}s_j(r_j-1).
\]
If
\[
E(V)<\frac{(p-1)(p-2)}{2},
\]
then \(V\) is not a \(W\)-link. In particular, \(V\) cannot be embedded in the
horseshoe template.

Moreover, if \(V\) is a knot and
\[
E(V)<\frac{p(p-1)}{2},
\]
then \(V\) is not a \(W\)-link.
\end{theorem}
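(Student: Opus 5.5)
The plan is to compare crossing numbers of positive braid words, using two facts. First, both \(V\)-braids and \(W\)-braids are minimal braid representatives: this is the result of \cite{de2024lorenz} for \(V\)-braids and Theorem~\ref{thm:intro-minimal-W} for \(W\)-braids. Second, for a positive braid word the crossing count is determined by the link. Concretely, suppose \(V\) were equivalent to a \(W\)-link \(W\) given by a \(W\)-braid on \(m\) strands. Since both braids are minimal, the braid index gives \(m=p\). Both braids are positive and contain a full twist, so their closures are non-split. By Stallings/Bennequin, the Seifert surface obtained from a positive braid word with \(n\) strands and \(c\) crossings is a fiber surface of maximal Euler characteristic \(\chi=n-c\). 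Since \(\chi\) is a link invariant and \(n=p\) for both braids, the two words have the same number of crossings.

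Counting letters in \(\beta_V\) gives \(q(p-1)+\sum v_i(u_i-1)+\sum s_j(r_j-1)=p(p-1)+E(V)\). So \(E(V)\) is the number of crossings beyond one full twist. For the \(W\)-braid on \(p\) strands the corresponding excess is
\[
\sum_{i=1}^k (w_i-1)+\sum_{j=2}^{p-1}s_j(j-1)+(s_p-p)(p-1).
\]
Using \(s_j\geq 1\) and \(s_p\geq p\), this is at least \(\sum_{j=2}^{p-1}(j-1)=\frac{(p-1)(p-2)}{2}\). Hence equality of crossing counts is impossible when \(E(V)<\frac{(p-1)(p-2)}{2}\), which proves the first claim. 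Since horseshoe links are exactly \(W\)-links by Theorem~\ref{thm:intro-minimal-W}, \(V\) is then not embeddable in the horseshoe template.

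For the knot case, assume \(E(V)<\frac{p(p-1)}{2}=\frac{(p-1)(p-2)}{2}+(p-1)\). If \(s_p>p\), the term \((s_p-p)(p-1)\) already contributes at least \(p-1\), giving a contradiction. So \(s_p=p\), and the last block is exactly the full twist \(\Delta^2\), which is a pure braid. Moreover, if the \(w\)-sequence is nonempty, then \(w_1<s_p=p\), so every \(w_i\leq p-1\). All remaining letters are then \(\sigma_1,\dots,\sigma_{p-2}\): this covers the increasing blocks with \(w_i\leq p-1\) and the decreasing blocks \((\sigma_{j-1}\cdots\sigma_1)^{s_j}\) with \(j\leq p-1\). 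Therefore the permutation of \(\beta_W\) fixes the \(p\)-th strand, and the closure has at least two components (as \(p\geq 2\)). This contradicts \(V\) being a knot.

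The main point needing care is the invariance step. One must justify that the crossing count of a positive braid word on a fixed number of strands is a link invariant, via \(\chi=n-c\) for the (possibly disconnected-boundary but connected) fiber surface. One must also justify that the braid indices coincide, which relies on minimality of both representatives. Everything else is direct letter counting together with the permutation argument.
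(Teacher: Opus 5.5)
Your proposal is correct and is essentially the paper's proof. Minimality of both the $V$-braid and the $W$-braid forces the same strand number $p$, Stallings' theorem ($\chi=p-c$ for the fiber surface) then forces equal crossing counts, and the lower bound $\frac{(p-1)(p-2)}{2}$ on the $W$-excess gives the contradiction. Your knot case is the same permutation argument the paper isolates as a separate lemma (using $s_p>w_1$ to show that $s_p=p$ would fix the $p$-th strand), just inlined as a case split on whether $s_p>p$.
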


This obstruction is the main mechanism used in the paper to construct Lorenz
links which cannot be embedded in the horseshoe template. It allows us to
extend the Holmes--Williams phenomenon beyond torus knots. In particular, we
obtain multi-component Lorenz torus links, hyperbolic Lorenz knots and links,
and satellite Lorenz knots and links which are not horseshoe links.

We now state the main geometric applications.

\begin{theorem}\label{thm:intro-torus-link-obstruction}
Let \(2\leq p<q\). If the torus link \(T(p,q)\) is a \(W\)-link, then
\[
2q\geq 3p-2.
\]
If \(T(p,q)\) is a knot and is a \(W\)-link, then
\[
2q\geq 3p.
\]
\end{theorem}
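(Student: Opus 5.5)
The plan is to derive this directly from Theorem~\ref{thm:intro-crossing-obstruction}, applied to the torus link viewed as a \(V\)-link with empty decreasing and increasing sequences. For \(2\leq p\leq q\), the torus link \(T(p,q)\) is the closure of \((\sigma_1\sigma_2\cdots\sigma_{p-1})^q\). This is exactly \(\beta_V\) with \(m=n=0\), so \(T(p,q)=V((p,q))\), and the quantity in Theorem~\ref{thm:intro-crossing-obstruction} reduces to
\[
E(V)=(q-p)(p-1).
\]

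For the link case, suppose \(T(p,q)\) is a \(W\)-link. By the contrapositive of Theorem~\ref{thm:intro-crossing-obstruction} we must have
\[
(q-p)(p-1)\geq \frac{(p-1)(p-2)}{2}.
\]
Since \(p\geq 2\), we have \(p-1>0\), and dividing gives \(q-p\geq (p-2)/2\). This is equivalent to \(2q\geq 3p-2\).

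For the knot case, \(T(p,q)\) is a knot precisely when \(\gcd(p,q)=1\). The second clause of Theorem~\ref{thm:intro-crossing-obstruction} then forces
\[
(q-p)(p-1)\geq \frac{p(p-1)}{2},
\]
that is, \(q-p\geq p/2\), or \(2q\geq 3p\). This recovers the Holmes--Williams bound as a consistency check.

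The only point requiring care is identifying \(T(p,q)\) with the \(V\)-link \(V((p,q))\) and confirming that the hypotheses of Theorem~\ref{thm:intro-crossing-obstruction} are met. Empty sequences are explicitly allowed there, and \(p\leq q\) holds because \(p<q\). The hypothesis \(p<q\) is not needed beyond this; it only excludes the degenerate case \(q=p\), where \(E(V)=0\) and the obstruction applies trivially for \(p\geq 3\).

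No real obstacle is expected. All the substance lies in Theorem~\ref{thm:intro-crossing-obstruction}, which I take as given, and what remains is the algebra of dividing by \(p-1\).
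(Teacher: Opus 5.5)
Your proposal is correct and is essentially the paper's proof. The paper identifies \(T(p,q)\) with the \(V\)-link \(V((p,q))\), computes \(E(V)=(q-p)(p-1)\), and applies both clauses of Theorem~\ref{thm:V-not-W-crossing-obstruction} in the form ``if \(2q<3p-2\) (resp.\ \(2q<3p\) for knots) then \(E(V)\) is below the threshold.'' That is exactly the contrapositive of your division by \(p-1>0\).
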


\begin{theorem}\label{thm:intro-hyperbolic-knots}
There are infinitely many hyperbolic Lorenz knots which are not horseshoe
links. Consequently, there are infinitely many hyperbolic Lorenz knots which
are not R\"ossler links.
\end{theorem}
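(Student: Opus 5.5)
Our plan is to produce an explicit infinite family of \(V\)-links and to check three properties for each member: it is a knot, it is hyperbolic, and it satisfies the knot version of the crossing-number obstruction in Theorem~\ref{thm:intro-crossing-obstruction}. Once these hold, that theorem shows each member is not a \(W\)-link. By Theorem~\ref{thm:intro-minimal-W}, it is then not a horseshoe link. By the identification of R\"ossler links with horseshoe links adopted throughout the paper, it is not a R\"ossler link either.

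\textbf{The family.} The knot criterion requires \(E(V)<p(p-1)/2\). To keep the torus contribution small we take \(q=p+1\), so \((q-p)(p-1)=p-1\). We then add one short increasing block \((r,s)=(2,1)\), contributing \(r-1=1\). The candidate is
\[
K_p=V((2,1),(p,p+1)),
\]
the closure of \(\sigma_1(\sigma_1\cdots\sigma_{p-1})^{p+1}\). Its value is \(E(K_p)=(p-1)+1=p\), and \(p<p(p-1)/2\) for \(p\ge 4\).

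\textbf{Checking it is a knot.} The permutation of \((\sigma_1\cdots\sigma_{p-1})^{p+1}\) is a \(p\)-cycle raised to the power \(p+1\), which is again a \(p\)-cycle. Composing it with the transposition \((1\,2)\) splits it into two cycles. So \(K_p\) is in fact a link, and we must adjust the family.

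\textbf{Adjusting the family.} We have two options:
\begin{itemize}
\item Use a block \((r,1)\) with \(r\) odd. The permutation \((1\,2\cdots r)\) is then even, and its product with a \(p\)-cycle can be a single \(p\)-cycle. The cost is \(E=p-1+(r-1)\), which stays below \(p(p-1)/2\) for small fixed \(r\) and large \(p\).
\item Use \(q=p+2\) with \(p\) odd together with a \(2\)-cycle-type correction block.
\end{itemize}
We would take \(r=3\) and choose \(p\) with \(\gcd(p,p+1)=1\) and the right parity so that the closure is a knot. Checking the single-cycle condition is a direct permutation computation.

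\textbf{Hyperbolicity.} This is the main obstacle. For this step we would rely on the cited structural results on Lorenz links. Lorenz knots given as \(T\)-links with one short extra block are known to be hyperbolic outside explicit torus and satellite exceptions (de Paiva and Purcell's work on satellites and hyperbolicity of \(T\)-links). We would first try to place \(K_p\) in the setting of those theorems, namely a torus knot \(T(p,p+1)\) with a small number of added twists on few strands, where hyperbolicity is proved.

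If that fails, we would argue directly:
\begin{itemize}
\item \(K_p\) is not a torus knot, because its braid index \(p\) and genus, computed from the positive braid as \((c-p+1)/2\), do not match any torus knot of braid index \(p\).
\item \(K_p\) is not a satellite, using the classification of satellite Lorenz knots from the cited satellite paper, whose parameters do not match \(K_p\).
\end{itemize}

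\textbf{Infinitely many distinct knots.} Distinct values of \(p\) give distinct knots. Since \(V\)-braids are minimal, the braid index is \(p\), so the family is infinite.
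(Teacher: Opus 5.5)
The genuine gap is hyperbolicity, which your proposal only lists as things to try.

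Everything else agrees with the paper: the knot case of the obstruction, the passage to horseshoe links via the $W$-braid characterization, and distinctness by braid index. Your repaired family $V((3,1),(p,p+1))$ is indeed a knot for $p\ge 4$. The strand permutation of $\sigma_1\sigma_2(\sigma_1\cdots\sigma_{p-1})^{p+1}$ is the single cycle $1\mapsto 2\mapsto p\mapsto p-1\mapsto\cdots\mapsto 3\mapsto 1$. You should compute this rather than appeal to parity, since parity alone does not force a single cycle. It also has $E=p+1<p(p-1)/2$.

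The block $\sigma_1\sigma_2$ is only a third of a full twist on three strands, so $V((3,1),(p,p+1))$ is not a twisted torus knot. The results you gesture at (Lee's theorem on twisted torus knots, de Paiva--Purcell on Lorenz links obtained by twisting) concern blocks that are full twists $(r,rs)$, often with many twists. None of them covers this family, and your claim that Lorenz knots with one short extra block are known to be hyperbolic outside explicit exceptions overstates what is available. Your fallback does not close the gap either. The genus argument correctly rules out torus knots: $g=(p^2-p+2)/2$ would force $(p-1)\mid 2$. But there is no classification of satellite Lorenz knots to compare parameters against. The de Paiva--Purcell satellite paper constructs satellite families; it does not characterize them. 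So ``the parameters do not match'' proves nothing, and hyperbolicity of your family remains unproved.

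The missing idea is to make the correction block a full twist on two strands, as the paper does. Take $V((2,2),(p,p+1))$, which is the twisted torus knot $T(p,p+1;2,1)$. Since $\sigma_1^2$ induces the identity permutation, the closure is still a knot, and $E=(p-1)+2=p+1<p(p-1)/2$ for $p\ge 4$. Hyperbolicity is then available in the literature: \cite[Corollary~8.6]{de2024lorenz} covers one full twist, and \cite[Theorem~1.1]{MR3801447} covers blocks $(2,s_1)$ with $s_1\ge 4$ even. With this substitution the rest of your argument goes through unchanged.
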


\begin{theorem}\label{thm:intro-hyperbolic-links}
There are infinitely many hyperbolic Lorenz links with more than one component
which are not horseshoe links. Consequently, there are infinitely many
hyperbolic Lorenz links with more than one component which are not R\"ossler
links.
\end{theorem}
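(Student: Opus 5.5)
We need to construct infinitely many hyperbolic Lorenz links with at least two components, all of whose data satisfy the crossing obstruction of Theorem~\ref{thm:intro-crossing-obstruction}. The most economical route is a single explicit family of \(V\)-links with small \(E(V)\) relative to \(p\).

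\textbf{Choice of family.} We take \(q=p\), so the torus block is exactly one full twist on \(p\) strands, and add one short increasing block, for instance
\[
L_p=V((2,s),(p,p)) \quad\text{or}\quad L_p=V((r,1),(p,p)),
\]
with \(r\) small and \(p\) large. Then
\[
E(L_p)=0+(r-1)s,
\]
which is bounded while \((p-1)(p-2)/2\) grows quadratically. Hence for all large \(p\) we get \(E(L_p)<(p-1)(p-2)/2\), and Theorem~\ref{thm:intro-crossing-obstruction} shows that \(L_p\) is not a \(W\)-link, hence not a horseshoe link.

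\textbf{Number of components.} The closure of \((\sigma_1\cdots\sigma_{p-1})^p\) is \(T(p,p)\), which has \(p\) components, one per strand. Multiplying by \((\sigma_1\cdots\sigma_{r-1})^{s}\) permutes the first \(r\) strands by an \(r\)-cycle to the power \(s\). So we can choose parameters, for example \(s=1\), giving one \(r\)-cycle plus \(p-r\) fixed points. The closure then has \(p-r+1\ge 2\) components and is a genuine multi-component link. Its components are pairwise distinct, so varying \(p\) gives infinitely many inequivalent links; this can be detected by the component count or by braid index, since \(V\)-braids are minimal.

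\textbf{Hyperbolicity (main obstacle).} We must show \(L_p\) is hyperbolic, i.e.\ neither a torus link nor a satellite.

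The first thing to try is to cite known results on hyperbolicity of Lorenz links and T-links with a full twist plus a short block. The de Paiva--Purcell results state that T-links of the form \(T((r_1,s_1),\dots,(p,q))\) with \(r_i\) small relative to \(p\) and \(q\) a multiple of \(p\) are hyperbolic under explicit conditions. These links are obtained by Dehn filling a fully augmented / twisted link complement, and the filling slopes are long enough for hyperbolicity.

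If the citation requires \(q\) large, we instead take \(q=kp\) with \(k\) fixed. Then \(E=(k-1)p(p-1)+\dots\), which is too large, so this fails. We therefore keep \(q=p\) and instead use the satellite/torus classification for Lorenz links: a Lorenz link with a single full twist and one extra block of small width is neither a torus link nor a satellite, by de Paiva's characterization of satellite T-links. Concretely, we rule out a torus link by comparing braid index \(p\) and genus or crossing number, using positivity: for positive braids the minimal crossing count is \(p(p-1)+(r-1)s\), and \(T(p,p')\) has a different count or component number. We rule out a satellite structure through the known description that satellite Lorenz links arise only from cabling patterns with block widths dividing appropriately, which a single block of width \(r\) with \(s=1\), \(r\nmid p\) avoids.

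Choosing \(r=3\), \(s=1\), and \(p\equiv 1,2 \pmod 3\) large yields the infinite family. The word ``Consequently'' then follows because Rössler links are identified with horseshoe links.
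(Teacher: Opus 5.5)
Your crossing-number step and component count are fine. The hyperbolicity step is the gap, and it is more than a missing reference: the family you chose is not hyperbolic. With $q=p$ the torus block is the central full twist $\Delta_p^2$, which induces the identity permutation. It factors as $\Delta_3^2$ on strands $1,2,3$ times the full twist on $p-2$ strands, in which strands $1,2,3$ run together as one blackboard-parallel fat strand.

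Consequently the closure of $\sigma_1\sigma_2\Delta_p^2$, that is $V((3,1),(p,p))$, is obtained from $T(p-2,p-2)$ as follows. Take these $p-2$ fibers of the Hopf fibration and replace one component $C$ by the pattern $\widehat{(\sigma_1\sigma_2)^4}$, the $(3,4)$-cable of $C$. The torus $\partial N(C)$ splits the exterior into a cable space and $P\times S^1$, where $P$ is a sphere with $p-2$ holes. For $p\geq 5$ both pieces have incompressible boundary and neither is $T^2\times I$, so this torus is essential and $L_p$ is a satellite link.

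The same argument applies to $V((r,s),(p,p))$ whenever $p-r\geq 2$: the $p-r$ strands untouched by the extra block close up to separate Hopf fibers, and the $r$ twisted strands sit in a solid torus around another fiber. So no hyperbolicity theorem for T-links can cover your family, and your proposed satellite criterion points the wrong way. The condition $r\nmid p$ (and the congruence $p\equiv 1,2 \pmod 3$) is irrelevant; the divisibility that produces the essential torus is $p\mid q$.

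The missing idea is that $E(V)$ does not need to be bounded. You discarded larger $q$ after trying only $q=kp$. However, if $q-p$ is a small constant, then $(q-p)(p-1)$ grows only linearly in $p$ and is still eventually below the quadratic threshold $(p-1)(p-2)/2$. Moving $q$ off the multiples of $p$ is also what lets you invoke a genuine hyperbolicity theorem.

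This is exactly what the paper does. It takes $V((3,12),(2p,2p+2))$ for primes $p\geq 7$:
\begin{itemize}
\item hyperbolicity comes from de Paiva's theorem on hyperbolic twisted torus links (Lemma~\ref{lem:hyperbolic-V-3-3s1-2p-2pplus2});
\item it has exactly two components because $\gcd(2p,2p+2)=2$ and $(\sigma_1\sigma_2)^{12}$ induces the identity permutation;
\item it is not a $W$-link because $E(V)=4p+22<(2p-1)(p-1)=\frac{(2p-1)(2p-2)}{2}$;
\item different primes give distinct links because the braid index is $2p$.
\end{itemize}
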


\begin{theorem}\label{thm:intro-satellite}
There are infinitely many satellite Lorenz knots and links which are not
horseshoe links. Consequently, there are infinitely many satellite Lorenz knots
and links which are not R\"ossler links.
\end{theorem}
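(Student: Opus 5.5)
The plan is to apply the crossing-number obstruction, Theorem~\ref{thm:intro-crossing-obstruction}, to an explicit infinite family of $V$-links already known to be satellites. For links the hypothesis is $E(V)<(p-1)(p-2)/2$, and for knots it is $E(V)<p(p-1)/2$. Since every $V$-link is a Lorenz link, it suffices to exhibit infinitely many satellite $V$-links with small $E(V)$ relative to $p$. Once such a $V$-link is shown not to be a $W$-link, Theorem~\ref{thm:intro-minimal-W} says it is not a horseshoe link. By the identification of templates adopted in the introduction, it is then not a R\"ossler link either.

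For the family I would use the simplest $V$-links with one increasing block,
\[
V=V((r,s),(p,q)),\qquad \beta_V=(\sigma_1\cdots\sigma_{r-1})^{s}(\sigma_1\cdots\sigma_{p-1})^{q},
\]
which are the twisted torus links / T-links of Birman--Kofman. For these,
\[
E(V)=(q-p)(p-1)+s(r-1).
\]
I would take $s=rk$, a multiple of $r$, so that the first block is $k$ full twists on $r$ strands. I would then invoke the satellite results of de Paiva and de Paiva--Purcell \cite{de2021satellites,dePaivaPurcell2024} that are cited in the introduction. These give conditions on $(r,k,p,q)$, for instance $r$ dividing $p$ and $q$ in a suitable way, or $r$ equal to the number of strands of a sub-torus-braid, under which the closure is a cable of a torus knot, or more generally a satellite. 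The preferred route is to cite that result directly. If needed, the satellite structure can also be seen by hand: the $r$ twisted strands sit inside a solid torus that is a neighbourhood of a torus knot $T(a,b)$ traced by the block structure, and one shows that the boundary of this solid torus is essential.

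The key step is choosing parameters inside the satellite range so that $E(V)$ is small. I would take $q-p$ as small as the satellite criterion allows, ideally $q=p+c$ with $c$ bounded, and take $k$ and $r$ fixed or growing much slower than $p$. Then
\[
E(V)=c(p-1)+rk(r-1),
\]
which is linear in $p$, while $(p-1)(p-2)/2$ is quadratic in $p$. So for all sufficiently large $p$ in the family the inequality holds, and Theorem~\ref{thm:intro-crossing-obstruction} shows that $V$ is not a $W$-link. To get both knots and links, I would separate two subfamilies by parity or gcd conditions that control the number of components, which is determined by the permutation of $\beta_V$. For knots the weaker bound $p(p-1)/2$ is available, but the quadratic-versus-linear comparison already suffices. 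Distinct values of $p$ give distinct links: the braid index of a $V$-braid closure is $p$, by the minimality of $V$-braids \cite{de2024lorenz}. Hence the family is infinite.

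The main obstacle is the satellite step: one must make sure that the known satellite criterion is compatible with $q-p$ staying small compared to $p$. Many satellite constructions force $q$ to be a large multiple of $p$, and then $(q-p)(p-1)$ alone is quadratic in $p$ and kills the inequality. I would first look for cables arising with $q=p+1$ or $q\equiv 1 \pmod p$ with small quotient. If that fails, I would use several decreasing blocks $(u_i,\overline{v_i})$ to build the companion torus knot, since these contribute only $v_i(u_i-1)$ to $E(V)$, keeping the final torus block $(p,q)$ close to a single full twist.
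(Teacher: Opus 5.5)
Your overall strategy matches the paper's: feed a satellite $V$-family with $q-p$ bounded into the crossing obstruction, use linear-versus-quadratic growth, and separate the examples by braid index. The gap is the step you flag as the main obstacle. No family is actually shown to be satellite, and the family you set up cannot produce the knot half of the statement.

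With $s=rk$, the block $(\sigma_1\cdots\sigma_{r-1})^{rk}$ is a pure braid, so the closure has exactly $\gcd(p,q)$ components. Your ``parity or gcd conditions'' therefore reduce to $\gcd(p,q)$, and a knot forces $\gcd(p,q)=1$. That rules out the cabling mechanism you propose, where $r$ divides $p$ and $q$ so that the twisted strands lie in a solid torus around a smaller torus knot. Your fallbacks do not help either:
\begin{itemize}
\item For $r=2$ and $q=p+1$, your family $V((2,2k),(p,p+1))$ is exactly the family the paper proves \emph{hyperbolic} (Lemma~\ref{lem:hyperbolic-V-2-s1-p-pplus1}).
\item $q\equiv 1 \pmod p$ with quotient at least $2$ is excluded by your own inequality, since $(q-p)(p-1)<p(p-1)/2$ forces $q<3p/2$.
\item The decreasing-block alternative is not carried out.
\end{itemize}

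The missing idea is to drop the full-twist restriction and allow an odd exponent on two strands. The paper takes $r=2$, $p\ge 4$ even, $q=p+2$, and any $s_1\geq 1$.
\begin{itemize}
\item $T(p,p+2)$ consists of two parallel copies of $T(p/2,p/2+1)$. The factor $\sigma_1^{s_1}$ twists two adjacent parallel strands inside a neighbourhood of that companion, so $V((2,s_1),(p,p+2))$ is satellite by \cite[Theorem~6.1]{de2024lorenz}.
\item The permutation of $(\sigma_1\cdots\sigma_{p-1})^{p+2}$ has two cycles, one on the odd strands and one on the even strands. An odd $s_1$ adds the transposition $(1\,2)$ and merges them into a knot; an even $s_1$ leaves two components.
\item Since $E(V)=2(p-1)+s_1$, the knot criterion holds whenever $s_1<(p-1)(p-4)/2$ (e.g.\ $s_1=1$, $p\ge 6$ even). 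The link criterion holds whenever $s_1<(p-1)(p-6)/2$ (e.g.\ $s_1=2$, $p\ge 8$ even).
\end{itemize}
Your choice $r=2$, $s=2k$ with $p$ even and $q=p+2$ would recover the link half. The knot half needs the odd exponent, which your restriction $s=rk$ excludes.
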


Finally, the obstruction is inherited by superlinks. Indeed, if a link can be
embedded in the horseshoe template, then every sublink can also be embedded in
the same template. Therefore, if a Lorenz link contains a sublink which is not
a horseshoe link, then the whole Lorenz link is not a horseshoe link. This
shows that our examples generate many more Lorenz links which are not
R\"ossler links.

These results show that the difference between Lorenz links and
horseshoe/R\"ossler links is substantially larger than the classical
torus-knot obstruction suggests. They also illustrate how braid-theoretic
invariants can be used to distinguish templates through the link types of
their periodic orbits.

The paper is organized as follows. In
Section~\ref{sec:horseshoe-braids}, we recall the braid description of links
carried by the horseshoe template and relate horseshoe links to the braid
families used later in the paper. In
Section~\ref{sec:minimal-braids-horseshoe-links}, we introduce \(W\)-braids
and prove that they are precisely the minimal braid representatives of
horseshoe links. In Section~\ref{sec:V-links-that-are-W-links}, we exhibit an
infinite family of links which are both \(V\)-links and \(W\)-links. Finally,
in Section~\ref{sec:crossing-obstruction-V-W}, we prove the crossing-number
obstruction and apply it to construct torus, hyperbolic, and satellite Lorenz
links which cannot be embedded in the horseshoe template, and hence are not
R\"ossler links.

\subsection*{Acknowledgements}
The authors would like to thank Tali Pinsky and Andr{\'e} de Carvalho for helpful discussions.

\begin{figure}
\includegraphics[scale=0.15]{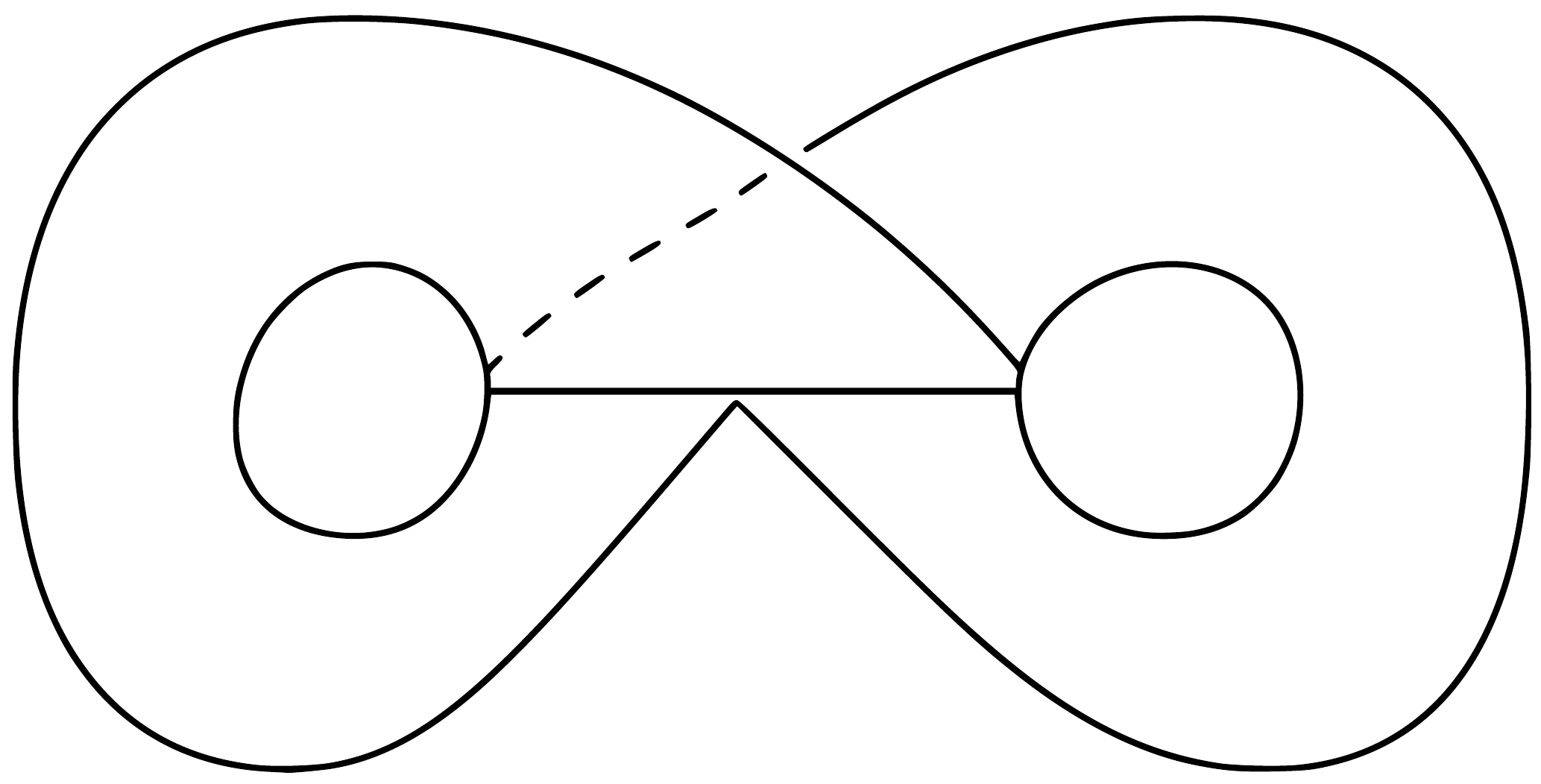} 
  \caption{Lorenz Template}
  \label{LT}
\end{figure}

\begin{figure}
\includegraphics[scale=0.15]{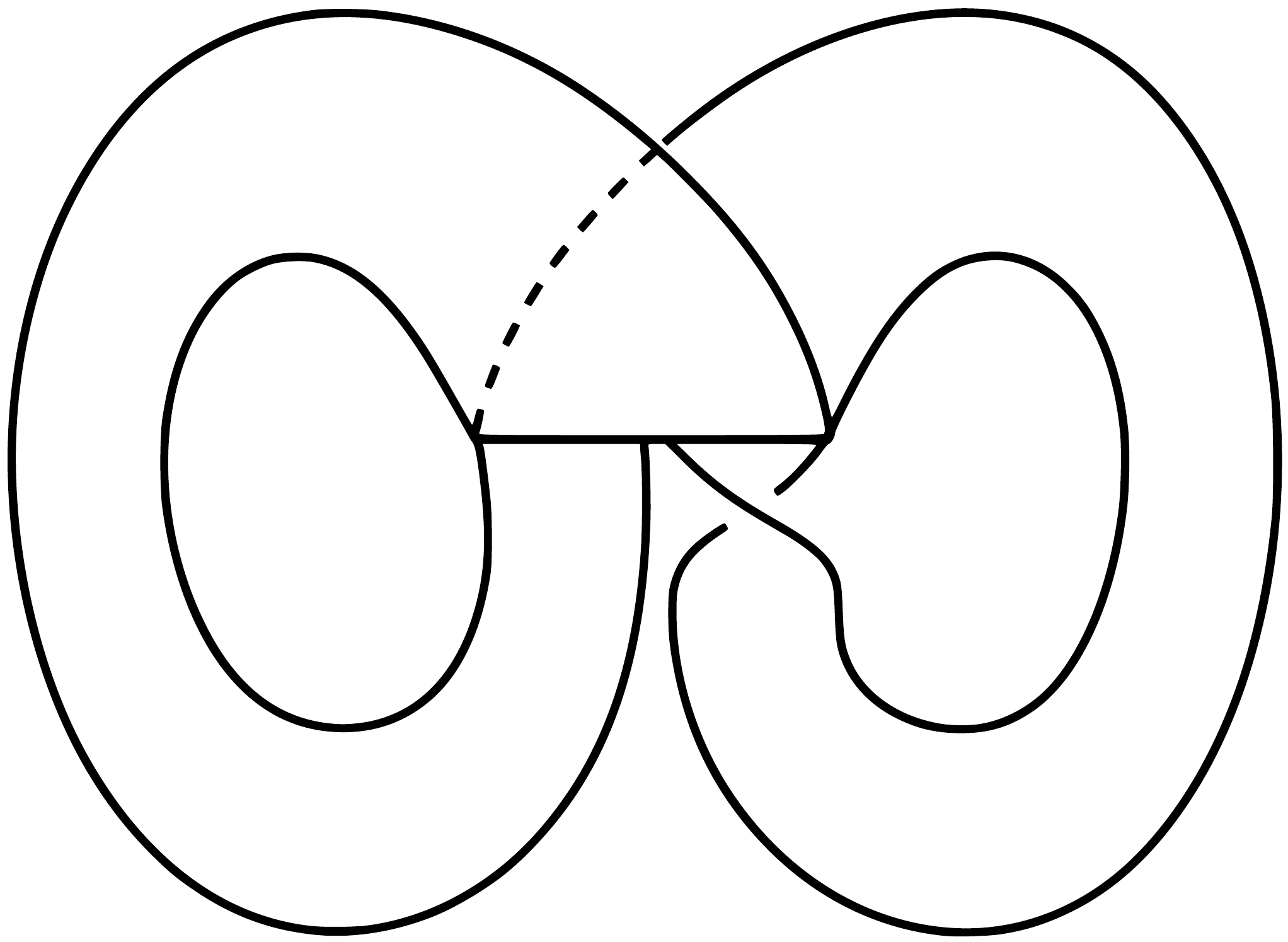} 
  \caption{Horseshoe Template}
  \label{HT}
\end{figure} 
 
\section{From the horseshoe template to horseshoe braids}
\label{sec:horseshoe-braids}

In this section we recall a braid description of links carried by the
horseshoe template. Our goal is to pass from the template definition of a
horseshoe link to a concrete family of positive braids that we shall use
throughout the paper.

We use the convention that boundary-parallel trivial components of the
template are ignored. More precisely, if a link carried by the horseshoe
template contains split unknotted components parallel to a boundary component
of the template, we discard these components. These components correspond to
the parameter \(d\) in the \(T^1\)-link description below, and they do not play
a role in the questions considered in this paper.

We identify the horseshoe template with the Lorenz-like template
\(\mathcal L(0,1)\). We now recall the corresponding \(T^1\)-link notation.

\begin{definition}
Let \(k\geq 0\). Let \(d\geq 0\), let \(r_{k+1}\geq 1\), and, if
\(k\geq 1\), let
\[
2\leq r_1<\cdots<r_k\leq r_{k+1}
\]
and let \(s_1,\dots,s_k\) be positive integers. The \emph{\(T^1\)-link}
\[
T^1((r_1,s_1),\ldots,(r_k,s_k),(r_{k+1};d))
\]
is defined to be the closure of the braid
\[
(\sigma_{1+d}\sigma_{2+d}\cdots\sigma_{d+r_1-1})^{s_1}
\cdots
(\sigma_{1+d}\sigma_{2+d}\cdots\sigma_{d+r_k-1})^{s_k}
\Delta_{r_{k+1},d+r_{k+1}},
\]
where \(\Delta_{r_{k+1},d+r_{k+1}}\) denotes a positive half twist on the last
\(r_{k+1}\) strands of the trivial braid with \(d+r_{k+1}\) strands.

If \(k=0\), we define
\[
T^1((r_1;d))
\]
to be the closure of
\[
\Delta_{r_1,d+r_1}.
\]
In particular, \(T^1((1;d))\) is the unlink with \(d+1\) components represented
by the trivial braid with \(d+1\) strands.
\end{definition}

The following theorem gives the braid description of links carried by the
horseshoe template. It is the case \(n=1\) of \cite[Theorem~4.5]{Volumebounds}.

\begin{theorem}\label{thm:horseshoe-T1-links}
A link can be embedded in the horseshoe template if and only if it is a
\(T^1\)-link. Equivalently, every link embedded in the horseshoe template is a
\(T^1\)-link, and every \(T^1\)-link can be embedded in the horseshoe template.
\end{theorem}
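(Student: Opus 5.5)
The statement is the specialization \(n=1\) of \cite[Theorem~4.5]{Volumebounds}, so my plan is to reduce to that result by checking that its hypotheses and conventions match ours. I would first identify the horseshoe template with the Lorenz-like template \(\mathcal L(0,1)\). This template has two branches, one untwisted and one carrying a single positive half twist. The identification is done by an isotopy of the branched surface in \(S^3\) that preserves the semiflow. Since an isotopy of templates carries periodic orbits to periodic orbits, the link types carried by the two templates coincide.

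\textbf{Forward direction.} Take a finite collection of periodic orbits on \(\mathcal L(0,1)\) and cut the template along its branch line. Each orbit is coded by a word in \(\{L,R\}\). Orbit points on the branch line that go through the left (untwisted) branch are shifted monotonically. Points that go through the right branch pass through a half twist, which contributes a positive half twist \(\Delta\) on the strands in that branch.

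Following Birman--Williams and Birman--Kofman, I would group the strands by how many consecutive passes through the untwisted branch they make before reaching the twisted one. This regrouping rewrites the braid as a product of increasing blocks \((\sigma_{1+d}\cdots\sigma_{d+r_i-1})^{s_i}\) with \(r_1<\cdots<r_k\), followed by the half twist on the last \(r_{k+1}\) strands. The \(d\) strands that never meet the twist correspond to orbits trivially parallel to the boundary; they give the unlinked trivial strands on the left. The result is exactly the \(T^1\)-braid.

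\textbf{Converse.} Given parameters \((r_i,s_i)\), \(r_{k+1}\) and \(d\), I would construct explicit \(L/R\)-words realizing the block structure, reversing the counting above. I would then check that these words are valid (not shift-equivalent to repeated words), so that they give actual distinct periodic orbits.

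The main obstacle is the bookkeeping that converts the permutation and half-twist braid from the template into the block form with strictly increasing \(r_i\) and \(r_k\le r_{k+1}\). In particular, one must justify the conjugations and the commutations of the half twist past the blocks. Here I would rely directly on the general argument of \cite{Volumebounds} for \(\mathcal L(0,n)\), setting \(n=1\), rather than redoing it.
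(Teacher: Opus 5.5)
Your proposal is correct and matches the paper's proof: the paper identifies the horseshoe template with the Lorenz-like template \(\mathcal L(0,1)\) and invokes Theorem~4.5 of \cite{Volumebounds} with \(n=1\). The braid-bookkeeping sketch you give for the two directions is extra motivation that the paper does not carry out; like you, it defers entirely to that general result.
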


\begin{proof}
The horseshoe template is identified with the Lorenz-like template
\(\mathcal L(0,1)\). The result follows from \cite[Theorem~4.5]{Volumebounds}, by
taking \(n=1\).
\end{proof}

We now introduce the explicit braid family used in this paper.

\begin{definition}
Let \(m\geq 2\), and let \(s_2,\dots,s_m\) be positive integers. The link
\[
S((2,\overline{s_2}), (3,\overline{s_3}), \dots,
(m-1,\overline{s_{m-1}}), (m,\overline{s_m}))
\]
is defined to be the closure of the braid
\[
(\sigma_1)^{s_2}
(\sigma_2\sigma_1)^{s_3}
\cdots
(\sigma_{m-2}\cdots\sigma_2\sigma_1)^{s_{m-1}}
(\sigma_{m-1}\cdots\sigma_2\sigma_1)^{s_m}.
\]
We call this braid an \emph{\(S\)-braid}.
\end{definition}

The following correspondence appears, in an informal form, in Kofman's lecture
notes \cite{KofmanHorseshoeBeamer}. There, the horseshoe template is related
to the Lorenz-like template \(\mathcal L(0,1)\), and the corresponding braid
description is stated in terms of reversed braid factors. For completeness, we
give a formal statement and proof in the notation used in this paper.

\begin{theorem}\label{thm:horseshoe-S-links}
Up to boundary-parallel trivial components, a link is a horseshoe link if and
only if it is of the form
\[
S((2,\overline{s_2}), (3,\overline{s_3}), \dots,
(m-1,\overline{s_{m-1}}), (m,\overline{s_m}))
\]
for some \(m\geq 2\) and positive integers \(s_2,\dots,s_m\).
\end{theorem}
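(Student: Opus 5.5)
The plan is to reduce everything to Theorem~\ref{thm:horseshoe-T1-links}: a link is a horseshoe link if and only if it is a \(T^1\)-link. Discarding boundary-parallel trivial components amounts to taking \(d=0\). So it suffices to show two things. First, every \(T^1\)-link with \(d=0\) is an \(S\)-link. Second, every \(S\)-link is such a \(T^1\)-link. The bridge between the two descriptions is the standard identity expressing a half twist as a product of decreasing (or increasing) blocks,
\[
\Delta_r=(\sigma_1)(\sigma_2\sigma_1)\cdots(\sigma_{r-1}\cdots\sigma_1),
\]
together with the symmetry of the braid group that reverses braid words. Word reversal is the antiautomorphism \(\sigma_i\mapsto\sigma_i\) with the product order reversed; it turns increasing blocks \(\sigma_1\cdots\sigma_{r-1}\) into decreasing blocks \(\sigma_{r-1}\cdots\sigma_1\). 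The closure of a reversed braid is the original closed braid with its orientation reversed, and for the unoriented, or reversible, link types considered here this gives the same link. This is exactly the ``reversed braid factors'' remark attributed to Kofman.

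\textbf{Step 1 (\(T^1\Rightarrow S\)).} Take the \(T^1\) braid \((\sigma_1\cdots\sigma_{r_1-1})^{s_1}\cdots(\sigma_1\cdots\sigma_{r_k-1})^{s_k}\Delta_{r_{k+1}}\) with \(d=0\). Set \(m=r_{k+1}\). Write \(\Delta_m\) as the product of the increasing blocks \((\sigma_1)(\sigma_1\sigma_2)\cdots(\sigma_1\cdots\sigma_{m-1})\), using the version of the identity read with increasing factors. Then merge each factor \((\sigma_1\cdots\sigma_{j-1})^{s}\) with the single block \((\sigma_1\cdots\sigma_{j-1})\) coming from \(\Delta_m\). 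Conjugating cyclically moves the factors into increasing order of \(j\), which is legitimate because the \(r_i\) are already increasing and precede \(\Delta\). The result is
\[
(\sigma_1)^{t_2}(\sigma_1\sigma_2)^{t_3}\cdots(\sigma_1\cdots\sigma_{m-1})^{t_m},
\]
where \(t_j=1+s_i\) if \(j=r_i\) and \(t_j=1\) otherwise, so every \(t_j\geq1\). Reversing the word gives a product of decreasing blocks in the opposite order. To bring this into the \(S\) form, with the block indexed by \(2\) first, conjugate by \(\Delta_m\). Conjugation by \(\Delta_m\) sends \(\sigma_i\mapsto\sigma_{m-i}\), which may complicate the block shapes. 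Alternatively, use the flip anti-automorphism combined with \(\sigma_i\mapsto\sigma_{m-i}\), and choose whichever combination of reversal, flip and conjugation yields exactly \((\sigma_1)^{t_2}(\sigma_2\sigma_1)^{t_3}\cdots(\sigma_{m-1}\cdots\sigma_1)^{t_m}\). The edge case \(m=1\) gives an unlink component, which is discarded.

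\textbf{Step 2 (\(S\Rightarrow T^1\)).} Reverse the computation. Given positive \(s_2,\dots,s_m\), peel one copy of each block off to reassemble \(\Delta_m\). This leaves exponents \(s_j-1\geq 0\). The indices \(j\) with \(s_j-1>0\) give an increasing sequence \(r_1<\dots<r_k\leq m=r_{k+1}\), with exponents \(s_j-1\), and this is a \(T^1\)-link with \(d=0\). The positivity of the \(s_j\) is exactly what makes the two parameter sets correspond bijectively.

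\textbf{Main obstacle.} The only delicate point is Step 1's bookkeeping: checking that a single symmetry (word reversal, the flip \(\sigma_i\mapsto\sigma_{m-i}\), or conjugation by \(\Delta_m\)) turns the increasing-block form into the decreasing-block form with the blocks in the required order, and that it preserves the link type of the closure. I would verify the identity \(\Delta_m=(\sigma_1)(\sigma_2\sigma_1)\cdots(\sigma_{m-1}\cdots\sigma_1)=(\sigma_1)(\sigma_1\sigma_2)\cdots(\sigma_1\cdots\sigma_{m-1})\) by induction on \(m\). For link equivalence I would use the fact that reversing the word of a braid yields the closure with reversed orientation, which is the same unoriented link.
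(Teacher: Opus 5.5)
\textbf{There is a genuine gap in Step 1.}

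Your Step 1 rests on the factorization \(\Delta_m=(\sigma_1)(\sigma_1\sigma_2)\cdots(\sigma_1\cdots\sigma_{m-1})\), and this factorization is false. For \(m=3\) the right-hand side is \(\sigma_1^2\sigma_2\), which induces the transposition \((2\,3)\), whereas \(\Delta_3=\sigma_1\sigma_2\sigma_1\) induces \((1\,3)\). Write \(P_j=\sigma_1\cdots\sigma_{j-1}\) and \(A_{j-1}=\sigma_{j-1}\cdots\sigma_1\). The correct statements are \(\Delta_m=A_1A_2\cdots A_{m-1}\) and, after reversing, \(\Delta_m=P_mP_{m-1}\cdots P_2\), with the increasing blocks in decreasing length.

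With the correct ordering, the merging step fails. The copies of \(P_{m-1},\dots,P_3\) inside \(\Delta_m\) are separated from the \(T^1\)-factors by non-commuting blocks, and cyclic conjugation only moves a factor from one end of the word to the other.

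Your intermediate form \(P_2^{t_2}\cdots P_m^{t_m}\) in fact gives the wrong link. Take the \(T^1\)-braid \(P_4\Delta_4\), i.e.\ \(t=(1,1,2)\). Then \(P_4\Delta_4=\Delta_4A_3=A_1A_2A_3^2\), which induces the permutation \((2\,4)\) and closes to a three-component link. Your braid \(P_2P_3P_4^2\) instead induces a \(4\)-cycle and closes to a knot.

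Reversal, the flip \(\sigma_i\mapsto\sigma_{m-i}\), and conjugation all preserve link type up to orientation. So no combination of them can repair this. The step you singled out as the main obstacle cannot be completed, and Step 2, being its reverse, inherits the same error.

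\textbf{The missing idea.} Your remark that conjugation by \(\Delta_m\) acts as \(\sigma_i\mapsto\sigma_{m-i}\) is the right tool. It has to be applied to the nested half twists \(\Delta_j\), one at a time for \(j=m,m-1,\dots,2\).
\begin{enumerate}
\item Conjugation by \(\Delta_j\) gives \(P_j\Delta_j=\Delta_jA_{j-1}\).
\item Combined with \(\Delta_j=\Delta_{j-1}A_{j-1}\), this yields \(P_j^{a}\Delta_j=\Delta_{j-1}A_{j-1}^{a+1}\).
\item Pushing the half twist leftward through the \(T^1\)-blocks produces the exact braid identity \(P_2^{a_2}\cdots P_m^{a_m}\Delta_m=A_1^{a_2+1}\cdots A_{m-1}^{a_m+1}\), where \(a_j\geq 0\) is the exponent of \(P_j\) in the \(T^1\)-braid.
\end{enumerate}
This is the paper's argument. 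It handles both directions at once through \(s_j=a_j+1\), and it needs neither word reversal nor any orientation considerations.
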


\begin{proof}
Let \(L\) be a horseshoe link. By definition, \(L\) can be embedded in the
horseshoe template. By Theorem~\ref{thm:horseshoe-T1-links}, \(L\) is a
\(T^1\)-link.

By our convention, we ignore boundary-parallel trivial components. Thus we may
assume that the \(T^1\)-representative has no extra trivial strands, that is,
\(d=0\). Therefore \(L\) is represented by a braid of the form
\[
(\sigma_1\cdots\sigma_{r_1-1})^{s_1}
\cdots
(\sigma_1\cdots\sigma_{r_k-1})^{s_k}
\Delta_m,
\]
where \(m=r_{k+1}\).

For notational convenience, for each \(2\leq j\leq m\), let \(a_j\) be the
exponent of the block \((\sigma_1\sigma_2\cdots\sigma_{j-1})\), and set
\(a_j=0\) if no such block occurs. Then the braid can be written uniformly as
\[
(\sigma_1)^{a_2}
(\sigma_1\sigma_2)^{a_3}
\cdots
(\sigma_1\sigma_2\cdots\sigma_{m-1})^{a_m}
\Delta_m,
\]
where \(a_2,\dots,a_m\) are non-negative integers.

Set
\[
P_j=\sigma_1\sigma_2\cdots\sigma_{j-1}
\qquad\text{and}\qquad
A_{j-1}=\sigma_{j-1}\sigma_{j-2}\cdots\sigma_1.
\]
We write the positive half twist on \(j\) strands as
\[
\Delta_j=A_1A_2\cdots A_{j-1}.
\]
We shall use the standard braid identity
\[
P_j\Delta_j=\Delta_j A_{j-1}.
\]
Hence, for every \(a\geq 0\),
\[
P_j^a\Delta_j=\Delta_j A_{j-1}^a.
\]

Now consider the braid
\[
P_2^{a_2}P_3^{a_3}\cdots P_m^{a_m}\Delta_m.
\]
Since
\[
\Delta_m=\Delta_{m-1}A_{m-1},
\]
we have
\[
P_m^{a_m}\Delta_m
=
\Delta_m A_{m-1}^{a_m}
=
\Delta_{m-1}A_{m-1}^{a_m+1}.
\]
Therefore
\[
P_2^{a_2}\cdots P_m^{a_m}\Delta_m
=
P_2^{a_2}\cdots P_{m-1}^{a_{m-1}}
\Delta_{m-1}A_{m-1}^{a_m+1}.
\]
Applying the same identity to
\(P_{m-1}^{a_{m-1}}\Delta_{m-1}\), we obtain
\[
P_{m-1}^{a_{m-1}}\Delta_{m-1}
=
\Delta_{m-1}A_{m-2}^{a_{m-1}}
=
\Delta_{m-2}A_{m-2}^{a_{m-1}+1}.
\]
Hence
\[
P_2^{a_2}\cdots P_m^{a_m}\Delta_m
=
P_2^{a_2}\cdots P_{m-2}^{a_{m-2}}
\Delta_{m-2}
A_{m-2}^{a_{m-1}+1}
A_{m-1}^{a_m+1}.
\]
Iterating this argument gives
\[
P_2^{a_2}P_3^{a_3}\cdots P_m^{a_m}\Delta_m
=
A_1^{a_2+1}A_2^{a_3+1}\cdots A_{m-1}^{a_m+1}.
\]
Thus the closure of
\[
(\sigma_1)^{a_2}
(\sigma_1\sigma_2)^{a_3}
\cdots
(\sigma_1\sigma_2\cdots\sigma_{m-1})^{a_m}
\Delta_m
\]
is the closure of
\[
A_1^{a_2+1}A_2^{a_3+1}\cdots A_{m-1}^{a_m+1},
\]
that is, the closure of the braid
\[
(\sigma_1)^{a_2+1}
(\sigma_2\sigma_1)^{a_3+1}
\cdots
(\sigma_{m-1}\cdots\sigma_2\sigma_1)^{a_m+1}.
\]
This is an \(S\)-braid. Hence \(L\) is an \(S\)-link.

Conversely, suppose \(L\) is the closure of an \(S\)-braid
\[
(\sigma_1)^{s_2}
(\sigma_2\sigma_1)^{s_3}
\cdots
(\sigma_{m-1}\cdots\sigma_2\sigma_1)^{s_m}.
\]
Since each \(s_j\) is positive, write
\[
s_j=a_j+1
\qquad
(2\leq j\leq m),
\]
where \(a_j\geq 0\). Reversing the braid manipulation above, the closure of
this braid is equivalent to the closure of
\[
(\sigma_1)^{a_2}
(\sigma_1\sigma_2)^{a_3}
\cdots
(\sigma_1\sigma_2\cdots\sigma_{m-1})^{a_m}
\Delta_m.
\]
After omitting the zero powers, this is a \(T^1\)-braid with \(d=0\).
Therefore, by Theorem~\ref{thm:horseshoe-T1-links}, \(L\) can be embedded in
the horseshoe template. Hence \(L\) is a horseshoe link.

Thus, up to boundary-parallel trivial components, the links carried by the
horseshoe template are exactly the links represented by \(S\)-braids.
\end{proof}
 
\begin{remark}\label{rem:boundary-parallel-components}
Throughout this paper, we disregard components of a horseshoe link that are
parallel to boundary components of the horseshoe template. These components
are split trivial components. In the \(T^1\)-link description, they correspond
to the parameter \(d\). Thus, if a horseshoe link contains such components,
then it is the split union of its nontrivial part with a finite number of
unknotted components. Conversely, such split unknotted components may be added
without changing the part of the link carried by the nontrivial dynamics of
the template.

This convention does not affect the arguments in this paper. Indeed, if a
link is carried by the horseshoe template, then every sublink obtained by
deleting boundary-parallel trivial components is also carried by the
horseshoe template. Therefore, the obstructions used below apply to the
nontrivial part of the link. In the extreme case where all components are
boundary-parallel, the link is simply an unlink, and this case is completely
understood. Hence we ignore boundary-parallel trivial components in what
follows.
\end{remark}

\section{Minimal braids of horseshoe links}
\label{sec:minimal-braids-horseshoe-links}

In this section we describe minimal braid representatives of horseshoe links.
As in Remark~\ref{rem:boundary-parallel-components}, we ignore
boundary-parallel trivial components. By
Theorem~\ref{thm:horseshoe-S-links}, after deleting such components, every
horseshoe link is represented by an \(S\)-braid. The main result is that every
such link admits a minimal braid representative in a standard form which we
call a \(W\)-braid. Conversely, every \(W\)-braid is a minimal braid
representative of a horseshoe link.

We recall the terminology used in this section. If \(\beta\in B_p\), its
closure will be denoted by \(\widehat{\beta}\). A link \(L\subset S^3\) is
said to be represented by the closed \(p\)-braid \(\widehat{\beta}\) if
\(\widehat{\beta}\) is equivalent to \(L\), that is, ambient isotopic to \(L\)
in \(S^3\). The \emph{braid index} \(b(L)\) of \(L\) is the smallest integer
\(p\) such that \(L\) can be represented by a closed \(p\)-braid. A braid
representative \(\beta\in B_p\) of \(L\) is called \emph{minimal} if
\(p=b(L)\).

We use the following notation. For \(j\geq 1\), set
\[
A_j=\sigma_j\sigma_{j-1}\cdots\sigma_1.
\]
Thus the \(S\)-braid
\[
S((2,\overline{s_2}), (3,\overline{s_3}),\dots,
(m,\overline{s_m}))
\]
is
\[
A_1^{s_2}A_2^{s_3}\cdots A_{m-1}^{s_m}.
\]
In the proof below, intermediate braids may contain zero powers \(A_j^0\);
these factors are always omitted.

Let \(B\in B_p\). We say that \(B\) has \emph{span contained in} \([1,q]\),
and write
\[
\operatorname{span}(B)\subset [1,q],
\]
if \(B\) lies in the standard embedded subgroup of \(B_p\) generated by
\[
\sigma_1,\dots,\sigma_{q-1}.
\]
Equivalently, \(B\) admits a braid word involving only the generators
\(\sigma_1^{\pm1},\dots,\sigma_{q-1}^{\pm1}\).

\begin{definition}
Let \(m\geq 2\), let \(s_2,\dots,s_m\) be positive integers, and let
\(w_1,\dots,w_k\) be a possibly empty sequence of positive integers satisfying
\[
m\geq w_1>\cdots>w_k\geq 2
\]
whenever \(k>0\). Assume also that
\[
s_m\geq m,
\]
and, if \(k>0\), that
\[
s_m>w_1.
\]
A \(W\)-braid is a braid of the form
\[
(\sigma_1\sigma_2\cdots\sigma_{w_1-1})
\cdots
(\sigma_1\sigma_2\cdots\sigma_{w_k-1})
A_1^{s_2}A_2^{s_3}\cdots A_{m-1}^{s_m}.
\]
Its closure is called a \(W\)-link.

Equivalently, we denote such a braid, or its closure, by
\[
W((w_1,1),\dots,(w_k,1),
(2,\overline{s_2}),\dots,(m,\overline{s_m})).
\]
If \(k=0\), the initial sequence is omitted.

We also regard the trivial one-strand braid as a degenerate \(W\)-braid. Its
closure is the unknot.
\end{definition}

The condition \(s_m>w_1\), when the initial sequence is nonempty, is included
so that \(W\)-braids form the precise standard class corresponding to minimal
braid representatives of horseshoe links. Indeed, in the proof of
Theorem~\ref{thm:minimal-horseshoe-W}, every \(S\)-braid is reduced to a
\(W\)-braid by successively removing strands from the right. If an initial
\(W\)-block of width \(w_1\) is created, then the final exponent in the
remaining tail is of the form
\[
t_i=s_i+t_{i+1},
\]
where \(w_1=t_{i+1}\). Since \(s_i>0\), the resulting \(W\)-braid automatically
satisfies
\[
t_i>w_1.
\]
Thus the condition \(s_m>w_1\) is forced by the construction from
\(S\)-braids.

Conversely, this condition guarantees that, when a \(W\)-braid is converted
back into an \(S\)-braid by reversing the construction, all exponents in the
resulting \(S\)-braid are positive. If the condition were omitted, the inverse
construction could produce a braid with a zero exponent in the \(S\)-braid
tail. Such a braid would not be an \(S\)-braid in the standard sense used in
Theorem~\ref{thm:horseshoe-S-links}. Hence the condition is imposed to obtain
the desired equivalence between \(W\)-braids and minimal braid representatives
of horseshoe links.

\begin{lemma}\label{lem:W-braids-are-minimal}
\(W\)-braids are minimal braid representatives of their closures.
\end{lemma}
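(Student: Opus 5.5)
The plan is to invoke the Franks--Williams criterion: a positive braid on \(m\) strands that contains a positive full twist \(\Delta_m^2\) realizes the braid index of its closure. Its proof is the Morton--Franks--Williams inequality. For a braid \(\beta\in B_m\) with exponent sum \(e\), the \(v\)-degree span of the HOMFLYPT polynomial of \(\widehat\beta\) is at most \(2(m-1)\). Franks and Williams showed that equality holds when \(\beta\) is positive and contains a full twist. Since the MFW lower bound on braid index is \(\tfrac12\,\mathrm{span}_v+1\), equality gives \(b(\widehat\beta)=m\). So the whole task is to show that every \(W\)-braid is positive and contains \(\Delta_m^2\) as a subword, up to conjugation and positive braid relations. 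The degenerate one-strand \(W\)-braid has the unknot as closure and is trivially minimal.

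The first step is positivity, which is immediate from the definition: every factor \(\sigma_1\cdots\sigma_{w_i-1}\) and every \(A_j^{s_{j+1}}\) is a positive word.

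The second step is to locate the full twist. The final block is \(A_{m-1}^{s_m}=(\sigma_{m-1}\cdots\sigma_1)^{s_m}\) with \(s_m\geq m\). The standard identity \((\sigma_{m-1}\cdots\sigma_1)^m=\Delta_m^2\) holds because \(\sigma_{m-1}\cdots\sigma_1\) is the reverse of the Garside generator \(\sigma_1\cdots\sigma_{m-1}\), whose \(m\)-th power is central and equals \(\Delta_m^2\); reversal fixes \(\Delta_m^2\). Hence the \(W\)-braid equals
\[
(\sigma_1\cdots\sigma_{w_1-1})\cdots(\sigma_1\cdots\sigma_{w_k-1})\,A_1^{s_2}\cdots A_{m-2}^{s_{m-1}}A_{m-1}^{s_m-m}\,\Delta_m^2 ,
\]
which is a positive word multiplied by a full twist on all \(m\) strands. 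By Franks--Williams, or equivalently Morton's corollary that positive braids containing a full twist are minimal, the braid index of the closure is exactly \(m\). The hypothesis \(s_m>w_1\) is not needed for minimality, since \(s_m\geq m\) already suffices.

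The main point to check carefully is that the cited criterion applies exactly in this generality. We need the positive word outside the full twist to be arbitrary, not required to be homogeneous or to have a particular shape. I would cite Franks--Williams (Trans. AMS 1987, Corollary 2.4), which covers braids of the form \(\Delta_m^2 P\) with \(P\) positive, and check that the conjugation moving \(\Delta_m^2\) to the front is harmless. That check is clear, since \(\Delta_m^2\) is central. For completeness I would also note that the MFW bound is insensitive to orientation conventions, so the same argument works for links as well as knots.
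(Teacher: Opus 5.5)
Your proposal is correct and is essentially the paper's proof. Both arguments treat the degenerate one-strand case separately. For the general case, both observe that the \(W\)-braid is positive and that its final block \(A_{m-1}^{s_m}\) with \(s_m\geq m\) contains the positive full twist \(A_{m-1}^m=\Delta_m^2\), and then apply Franks--Williams \cite[Corollary~2.4]{Franks}. Your added details on the MFW inequality, the identity \((\sigma_{m-1}\cdots\sigma_1)^m=\Delta_m^2\), and the centrality of \(\Delta_m^2\) are correct elaborations of steps the paper leaves implicit.
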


\begin{proof}
The degenerate one-strand \(W\)-braid is minimal, since its closure is the
unknot.

Now let \(W\) be a non-degenerate \(W\)-braid on \(m\geq 2\) strands. By
definition, \(W\) is positive and has final factor
\[
A_{m-1}^{s_m}=(\sigma_{m-1}\sigma_{m-2}\cdots\sigma_1)^{s_m},
\]
with \(s_m\geq m\). In particular, it contains
\[
A_{m-1}^m=(\sigma_{m-1}\sigma_{m-2}\cdots\sigma_1)^m,
\]
which is a positive full twist on all \(m\) strands. Therefore, by
Franks--Williams \cite[Corollary~2.4]{Franks}, the braid realizes the braid
index of its closure. Hence every \(W\)-braid is a minimal braid
representative.
\end{proof}

We shall use the following elementary isotopy.

\begin{proposition}\label{prop:horseshoe-isotopy-one-step}
Let \(\beta\in B_p\) be a braid whose span is contained in \([1,p-1]\), and
let \(q\) be an integer such that
\[
1<q\leq p-1.
\]
Then the closure of the \(p\)-braid
\[
\beta(\sigma_{p-1}\sigma_{p-2}\cdots \sigma_1)^q
\]
is isotopic to the closure of the \((p-1)\)-braid
\[
\beta
(\sigma_{p-q}\sigma_{p-q+1}\cdots \sigma_{p-2})
(\sigma_{p-2}\sigma_{p-3}\cdots \sigma_1)^q,
\]
where \(\beta\) is regarded as a braid on the first \(p-1\) strands.
\end{proposition}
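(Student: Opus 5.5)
The plan is to prove a braid identity on \(p\) strands that isolates a single occurrence of \(\sigma_{p-1}\), and then to remove the \(p\)-th strand by a Markov destabilization. Write \(A_j=\sigma_j\sigma_{j-1}\cdots\sigma_1\) as in this section. The identity I aim for is
\[
A_{p-1}^{\,q}=(\sigma_{p-q}\sigma_{p-q+1}\cdots\sigma_{p-1})\,A_{p-2}^{\,q}
\qquad (1\le q\le p-1).
\]
Sanity check at \(p=3\), \(q=2\): the right-hand side is \(\sigma_1\sigma_2\sigma_1\sigma_1\). By the braid relation this equals \(\sigma_2\sigma_1\sigma_2\sigma_1=A_2^2\).

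I will prove the identity by induction on \(q\). For \(q=1\) it is the definition \(A_{p-1}=\sigma_{p-1}A_{p-2}\). The inductive step uses the standard shifting relation
\[
\sigma_{j}\,A_{p-1}=A_{p-1}\,\sigma_{j-1}\qquad (2\le j\le p-1),
\]
which follows from the braid relations and the far commutation relations. Assume the identity for \(q-1\le p-2\). Write \(A_{p-1}^{\,q}=A_{p-1}\cdot A_{p-1}^{\,q-1}\) and substitute the inductive expression for \(A_{p-1}^{\,q-1}\). Then push the factors \(\sigma_{p-q+1}\cdots\sigma_{p-1}\) leftward through \(A_{p-1}\); each index drops by one. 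Next combine the leading \(\sigma\) of \(A_{p-1}\) with the resulting string. Finally, shift the leftover \(A_{p-2}\) so that it merges with \(A_{p-2}^{\,q-1}\).

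This inductive step is where I expect the main obstacle: the index bookkeeping and the point where \(q\le p-1\) is needed, since the indices must stay \(\ge 1\). A cleaner alternative is to compare permutations and use that both sides are positive braids of equal length in which each pair of strands crosses at most once. As positive permutation braids they would then be equal exactly when their permutations agree. That holds here provided \(q\le p-1\), since \(A_{p-1}^{\,q}\) is then a permutation braid, being a prefix of a half twist.

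Given the identity, the braid becomes
\[
\beta\,(\sigma_{p-q}\cdots\sigma_{p-2})\,\sigma_{p-1}\,A_{p-2}^{\,q}.
\]
Conjugating by \(A_{p-2}^{\,q}\) does not change the closure, so we may cyclically move that factor to the front. This yields \(A_{p-2}^{\,q}\,\beta\,(\sigma_{p-q}\cdots\sigma_{p-2})\,\sigma_{p-1}\), where the word preceding \(\sigma_{p-1}\) lies in the span \([1,p-1]\) because of the hypothesis on \(\beta\). Markov destabilization then deletes \(\sigma_{p-1}\) and the last strand. Conjugating back yields the \((p-1)\)-braid
\[
\beta\,(\sigma_{p-q}\cdots\sigma_{p-2})\,A_{p-2}^{\,q},
\]
as claimed.
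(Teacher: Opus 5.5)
Your main argument is correct, and it takes a different, more explicit route than the paper. The paper argues geometrically: it follows the $p$-th strand through the final block, pushes it once around the braid axis and shrinks it, and asserts that the path of this strand leaves behind the factor $\sigma_{p-q}\cdots\sigma_{p-2}$. You instead prove the identity $A_{p-1}^q=(\sigma_{p-q}\cdots\sigma_{p-1})A_{p-2}^q$ in $B_p$ for $1\le q\le p-1$. The induction works because $A_{p-1}(\sigma_{p-q+1}\cdots\sigma_{p-1})=(\sigma_{p-q}\cdots\sigma_{p-2})A_{p-1}$ and $A_{p-1}=\sigma_{p-1}A_{p-2}$. After the identity, $\sigma_{p-1}$ occurs exactly once, and conjugation followed by a single Markov destabilization finishes the proof.

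Your version gives a checkable derivation of the factor that the paper reads off from a picture. It also shows exactly where each hypothesis is used. The bound $q\le p-1$ keeps the shifted indices at least $2$. The span condition on $\beta$ puts the word in front of $\sigma_{p-1}$ into $B_{p-1}$. The paper's argument conveys the geometric intuition but leaves this bookkeeping implicit.

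Two slips should be fixed.

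\textbf{The shifting relation is displayed backwards.} The correct form is $A_{p-1}\sigma_j=\sigma_{j-1}A_{p-1}$ for $2\le j\le p-1$. Already for $p=3$ and $j=2$, your displayed version would say $\sigma_2\sigma_2\sigma_1=\sigma_2\sigma_1\sigma_1$. This is false, since the two sides induce different permutations. Your verbal use of the relation, that pushing leftward through $A_{p-1}$ lowers each index by one, matches the correct form. So the induction itself is unaffected.

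\textbf{The permutation-braid alternative should be discarded.} For every $q\ge 2$, the power $A_{p-1}^q$ is not a positive permutation braid. The first copy of $A_{p-1}$ carries one strand across all the others, and the second copy carries a different strand across all the others. These two strands therefore cross twice. Your own sanity check exhibits this: $A_2^2$ has length $4$, which exceeds the length $3$ of the half twist on three strands.
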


\begin{proof}
Since \(\operatorname{span}(\beta)\subset [1,p-1]\), the \(p\)-th strand of
\(\beta\) is trivial. Thus the braid
\[
\beta(\sigma_{p-1}\sigma_{p-2}\cdots\sigma_1)^q
\]
may be viewed as a braid obtained from the nontrivial part of \(\beta\) by
adding one extra strand on the right.

Under the factor
\[
(\sigma_{p-1}\sigma_{p-2}\cdots\sigma_1)^q,
\]
this extra strand starts in the \(p\)-th position and is carried through the
block until it reaches the \((p-q)\)-th position. In the closure, this strand
goes once around the braid axis and can be pushed across the closure and
shrunk. This removes the \(p\)-th strand and reduces the last block from
\[
(\sigma_{p-1}\sigma_{p-2}\cdots\sigma_1)^q
\]
to
\[
(\sigma_{p-2}\sigma_{p-3}\cdots\sigma_1)^q.
\]
During this isotopy, the path of the shrunk strand contributes the additional
factor
\[
\sigma_{p-q}\sigma_{p-q+1}\cdots\sigma_{p-2}.
\]
Therefore the closure of
\[
\beta(\sigma_{p-1}\sigma_{p-2}\cdots\sigma_1)^q
\]
is isotopic to the closure of the \((p-1)\)-braid
\[
\beta
(\sigma_{p-q}\sigma_{p-q+1}\cdots\sigma_{p-2})
(\sigma_{p-2}\sigma_{p-3}\cdots\sigma_1)^q,
\]
where \(\beta\) is now regarded as a braid on the first \(p-1\) strands.
This proves the proposition.
\end{proof}

\begin{remark}\label{rem:horseshoe-isotopy-special-case}
We shall mainly use Proposition~\ref{prop:horseshoe-isotopy-one-step} in the
following form. If \(p=r+1\), then the closure of the \((r+1)\)-braid
\[
\beta A_r^q
\]
is isotopic to the closure of the \(r\)-braid
\[
\beta
(\sigma_{r-q+1}\sigma_{r-q+2}\cdots\sigma_{r-1})
A_{r-1}^q.
\]
By cyclically permuting the braid closure, the factor
\[
\sigma_{r-q+1}\sigma_{r-q+2}\cdots\sigma_{r-1}
\]
may be pulled once around the closure and placed on the leftmost \(q\) strands.
Thus the same link is also represented by the closure of
\[
(\sigma_1\sigma_2\cdots\sigma_{q-1})\beta A_{r-1}^q.
\]
\end{remark}

\begin{theorem}\label{thm:minimal-horseshoe-W}
After deleting boundary-parallel trivial components, every horseshoe link
admits a minimal braid representative which is a \(W\)-braid. Conversely,
every \(W\)-braid is a minimal braid representative of a horseshoe link.
\end{theorem}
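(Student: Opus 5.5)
The plan is to prove the two directions separately. The converse direction is quicker, so I handle it first. Let
\[
(\sigma_1\cdots\sigma_{w_1-1})\cdots(\sigma_1\cdots\sigma_{w_k-1})A_1^{s_2}\cdots A_{m-1}^{s_m}
\]
be a \(W\)-braid. Lemma~\ref{lem:W-braids-are-minimal} already gives minimality, so it remains to show that the closure is a horseshoe link. I will run Remark~\ref{rem:horseshoe-isotopy-special-case} backwards, one initial block at a time, starting from the last block \(w_k\) and moving toward \(w_1\). Each block \(\sigma_1\cdots\sigma_{q-1}\), with \(q=w_i\), is pushed around the closure to the right end. It is absorbed by adding a strand, which turns the current tail \(A_{r-1}^{q}\) into \(A_r^{q}\). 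The exponent \(q\) has to be split off from the current last exponent of the tail. This is exactly where the hypothesis \(s_m>w_1\), together with the strict decrease of the \(w_i\), is used: after each splitting the remaining exponent stays positive. When all initial blocks have been absorbed, we are left with an \(S\)-braid with all exponents positive. Theorem~\ref{thm:horseshoe-S-links} then shows that the closure is a horseshoe link.

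For the forward direction, let \(L\) be a horseshoe link with boundary-parallel trivial components deleted. By Theorem~\ref{thm:horseshoe-S-links}, \(L\) is the closure of an \(S\)-braid \(A_1^{s_2}\cdots A_{m-1}^{s_m}\). I will induct on the number of strands. If \(s_m\geq m\), the braid is already a \(W\)-braid with \(k=0\). If \(s_m=1\), the last strand enters only once, and a Markov destabilization removes it; the exponent then merges into the previous block, giving \(A_{m-2}^{s_{m-1}+1}\). If \(1<s_m\le m-1\), I apply Remark~\ref{rem:horseshoe-isotopy-special-case} with \(r=m-1\) and \(q=s_m\). This produces the braid
\[
(\sigma_1\cdots\sigma_{q-1})A_1^{s_2}\cdots A_{m-2}^{s_{m-1}}A_{m-2}^{q},
\]
whose tail is the \(S\)-braid on \(m-1\) strands with last exponent \(t=s_{m-1}+q\), and it carries one new initial increasing block of width \(q\). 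Note that \(t>q\). I then repeat the same step on the new tail. At each stage, let \(t\) be the current last exponent on \(r+1\) strands. If \(t\ge r+1\), I stop. If \(1<t\le r\), another initial block of width \(t\) is created. If \(t=1\), the step is a destabilization.

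The bookkeeping has two parts. First, the initial blocks stay on the left and have span inside the remaining strands, so Proposition~\ref{prop:horseshoe-isotopy-one-step} continues to apply to the tail with \(\beta\) equal to everything preceding it. This requires checking that the new block width \(q\) is at most the current strand count, which holds because \(q\le r\). Second, the widths created in successive steps are strictly increasing in time, since each new width is \(t_i=s_i+t_{i+1}>t_{i+1}\). Read left to right after cyclic moves, they therefore form a decreasing sequence \(w_1>\cdots>w_k\). The process terminates because the number of strands drops at each step. It ends either with \(t\ge\) the number of strands, giving \(s_m\ge m\) with \(s_m>w_1\) as explained after the definition, or at one strand, giving the degenerate \(W\)-braid. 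Minimality then follows from Lemma~\ref{lem:W-braids-are-minimal}.

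The step I expect to be the main obstacle is the ordering of the blocks. Each application of the remark places the new block at the far left, in front of the previously created blocks. I will verify that this yields the order \(w_1>\cdots>w_k\) in the stated form, using conjugation and the fact that the blocks all start at \(\sigma_1\). A second point to check is that a destabilization case (\(t=1\)) occurring after blocks have been created does not collide with their spans. If it does, I would first try to commute the offending block past the tail using \(P_j\Delta_j=\Delta_jA_{j-1}\).
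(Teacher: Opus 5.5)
Your forward direction is essentially the paper's proof. The paper also reduces the \(S\)-braid from the right using Remark~\ref{rem:horseshoe-isotopy-special-case}, destabilizing when \(s_m=1\), and it stops at the largest \(i\) with \(t_i=s_i+\cdots+s_m\ge i\), which is your stopping rule. The two obstacles you flag there do not arise. Each new block is strictly wider than the earlier ones and is placed at the far left, so the widths already decrease from left to right. Also, \(t=1\) can only happen at the first step, because \(t_j\ge 2\) for every \(j<m\). One correction: the span condition you actually need is not \(q\le r\). You need the earlier blocks to avoid \(\sigma_{j-1}\) on the current \(j\) strands, and this follows from their widths being \(<t_j<j\).

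The converse, however, absorbs the blocks in the wrong order. For \(k\geq 2\), absorbing \(w_k\) first fails for two reasons.

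\emph{Position.} The inverse of Remark~\ref{rem:horseshoe-isotopy-special-case} needs a closure of the form \((\sigma_1\cdots\sigma_{q-1})\beta A_{r-1}^{q}\). Equivalently, after cycling the block to the right end, it needs the form in Proposition~\ref{prop:horseshoe-isotopy-one-step}. So the absorbed block must be the leftmost one, which is \(\sigma_1\cdots\sigma_{w_1-1}\). The blocks in front of \(\sigma_1\cdots\sigma_{w_k-1}\) do not commute with it, so it cannot be brought into that position.

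\emph{Positivity.} Even granting the first absorption, the tail would then end in \(A_m^{w_k}\). Splitting off \(w_{k-1}>w_k\) from it would leave the negative exponent \(w_k-w_{k-1}\). So your claim that strict decrease of the \(w_i\) keeps the remaining exponents positive is false in the order you chose.

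The fix is to invert the forward construction, in which \(w_1\) was the last block created. Absorb \(w_1\) first, using \(A_{m-1}^{s_m}=A_{m-1}^{s_m-w_1}A_{m-1}^{w_1}\), which has positive exponents because \(s_m>w_1\). Then absorb \(w_2\) using \(A_m^{w_1}=A_m^{w_1-w_2}A_m^{w_2}\), and so on. This ends with the \(S\)-braid
\[
A_1^{s_2}\cdots A_{m-2}^{s_{m-1}}A_{m-1}^{s_m-w_1}A_m^{w_1-w_2}\cdots A_{m+k-2}^{w_{k-1}-w_k}A_{m+k-1}^{w_k},
\]
which is exactly what the paper does.
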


\begin{proof}
Let \(L\) be a horseshoe link, and let \(L'\) be the link obtained from \(L\)
after deleting boundary-parallel trivial components. By
Theorem~\ref{thm:horseshoe-S-links}, \(L'\) is the closure of an \(S\)-braid
\[
H_m=A_1^{s_2}A_2^{s_3}\cdots A_{m-1}^{s_m}
\]
on \(m\) strands. We show that the closure of \(H_m\) can be represented by a
\(W\)-braid.

If \(s_m\geq m\), then \(H_m\) is already a \(W\)-braid, with empty initial
sequence. Hence, by Lemma~\ref{lem:W-braids-are-minimal}, it is a minimal
braid representative of \(L'\).

Assume now that \(s_m<m\). If \(m=2\) and \(s_2=1\), then the closure of
\(H_m=A_1\) is the unknot. It is represented by the trivial one-strand braid,
which is a degenerate \(W\)-braid. Hence the conclusion holds in this case.

We may therefore assume that \(H_m\) is not this degenerate case. Define
\[
t_j=s_j+s_{j+1}+\cdots+s_m
\qquad
(2\leq j\leq m).
\]
Since we have excluded the case \(m=2\) and \(s_2=1\), we have
\[
t_2\geq 2.
\]
Hence there is a largest index \(i\) such that
\[
t_i\geq i.
\]

We now reduce the braid from the right. Suppose that, at some stage, the
current braid has the form
\[
B_j A_{j-1}^{t_j},
\]
where \(B_j\) is a \(j\)-strand braid with span contained in \([1,j-1]\), and
where \(j>i\). Since \(i\) is the largest index with \(t_i\geq i\), we have
\[
t_j<j.
\]

If \(t_j=1\), then necessarily \(j=m\) and \(s_m=1\), because all exponents
\(s_2,\dots,s_m\) are positive. In this case the last strand destabilizes, and
no initial \(W\)-block is created. Thus the closure is represented by the braid
\[
A_1^{s_2}A_2^{s_3}\cdots A_{m-2}^{s_{m-1}+1}.
\]

If \(1<t_j<j\), then Remark~\ref{rem:horseshoe-isotopy-special-case} applies
to the factor \(A_{j-1}^{t_j}\). It replaces the closure of
\[
B_jA_{j-1}^{t_j}
\]
by the closure of
\[
(\sigma_1\sigma_2\cdots\sigma_{t_j-1})B_jA_{j-2}^{t_j}.
\]
Thus one strand is removed from the \(S\)-braid tail, and one initial
\(W\)-block of width \(t_j\) is created on the left.

At each step, the previously created initial blocks have widths
\(t_{j+1},t_{j+2},\dots,t_m\), which are strictly smaller than \(t_j\). Hence
they involve only generators among
\[
\sigma_1,\dots,\sigma_{j-2},
\]
so the required span condition is preserved and the next step of the
iteration is valid.

Iterating this procedure for \(j=m,m-1,\dots,i+1\), we obtain a braid whose
closure is equal to the closure of
\[
(\sigma_1\sigma_2\cdots\sigma_{t_{i+1}-1})
(\sigma_1\sigma_2\cdots\sigma_{t_{i+2}-1})
\cdots
(\sigma_1\sigma_2\cdots\sigma_{t_m-1})
A_1^{s_2}A_2^{s_3}\cdots A_{i-2}^{s_{i-1}}A_{i-1}^{t_i},
\]
where any empty product is omitted.

Since \(i\) is the largest index satisfying \(t_i\geq i\), we have
\[
i\geq t_{i+1}>t_{i+2}>\cdots>t_m.
\]
Moreover, if the initial sequence is nonempty, then
\[
t_i=t_{i+1}+s_i>t_{i+1}.
\]
After omitting any block of width \(1\), the initial blocks have strictly
decreasing widths at least \(2\), the final \(S\)-braid tail has \(i\) strands
with last exponent \(t_i\geq i\), and the last exponent is strictly larger
than the first initial width whenever an initial block is present. Therefore
the resulting braid is a \(W\)-braid. By
Lemma~\ref{lem:W-braids-are-minimal}, this \(W\)-braid is a minimal braid
representative of \(L'\).

Conversely, let \(W\) be a \(W\)-braid. If \(W\) is the degenerate one-strand
\(W\)-braid, then its closure is the unknot, which is a horseshoe link. Thus
assume that \(W\) is non-degenerate. Then \(W\) has the form
\[
W=
(\sigma_1\sigma_2\cdots\sigma_{w_1-1})
\cdots
(\sigma_1\sigma_2\cdots\sigma_{w_k-1})
A_1^{s_2}A_2^{s_3}\cdots A_{m-1}^{s_m},
\]
where
\[
m\geq w_1>\cdots>w_k\geq 2,
\qquad
s_m\geq m,
\]
and, if \(k>0\),
\[
s_m>w_1.
\]
By Lemma~\ref{lem:W-braids-are-minimal}, \(W\) is a minimal braid
representative of its closure.

It remains to show that the closure of \(W\) is a horseshoe link. If \(k=0\),
then
\[
W=A_1^{s_2}A_2^{s_3}\cdots A_{m-1}^{s_m},
\]
which is already an \(S\)-braid. Hence its closure is a horseshoe link.

We may therefore assume that \(k\geq 1\). We reverse the construction above.
Since
\[
s_m>w_1,
\]
we may split the last factor as
\[
A_{m-1}^{s_m}=A_{m-1}^{s_m-w_1}A_{m-1}^{w_1}.
\]
Write
\[
\Gamma_1=
(\sigma_1\sigma_2\cdots\sigma_{w_2-1})
\cdots
(\sigma_1\sigma_2\cdots\sigma_{w_k-1})
A_1^{s_2}A_2^{s_3}\cdots A_{m-2}^{s_{m-1}}A_{m-1}^{s_m-w_1}.
\]
Then the closure of \(W\) is the closure of
\[
(\sigma_1\sigma_2\cdots\sigma_{w_1-1})\Gamma_1A_{m-1}^{w_1}.
\]
Applying the inverse of Remark~\ref{rem:horseshoe-isotopy-special-case}, this
closure is represented by the closure of
\[
\Gamma_1A_m^{w_1}.
\]
Thus the first initial block has been absorbed into the \(S\)-braid tail, and
the number of strands has increased by one.

Now repeat the same operation for the next initial block. Since
\[
w_1>w_2>\cdots>w_k,
\]
at the second step we can split
\[
A_m^{w_1}=A_m^{w_1-w_2}A_m^{w_2}
\]
and absorb the block
\[
\sigma_1\sigma_2\cdots\sigma_{w_2-1}
\]
into the tail, producing a new final factor \(A_{m+1}^{w_2}\). Continuing in
this way, each initial block is absorbed into the \(S\)-braid tail.

After finitely many steps, all initial blocks have been absorbed into the
\(S\)-braid tail. If \(k=1\), the closure of \(W\) is represented by
\[
A_1^{s_2}\cdots A_{m-2}^{s_{m-1}}
A_{m-1}^{s_m-w_1}
A_m^{w_1}.
\]
If \(k\geq 2\), the closure of \(W\) is represented by
\[
A_1^{s_2}\cdots A_{m-2}^{s_{m-1}}
A_{m-1}^{s_m-w_1}
A_m^{w_1-w_2}
\cdots
A_{m+k-2}^{w_{k-1}-w_k}
A_{m+k-1}^{w_k}.
\]
In both cases, all exponents are positive. Hence the resulting braid is an
\(S\)-braid. Therefore, by Theorem~\ref{thm:horseshoe-S-links}, its closure is
a horseshoe link.

Thus, after deleting boundary-parallel trivial components, every horseshoe
link admits a minimal braid representative which is a \(W\)-braid, and every
\(W\)-braid is a minimal braid representative of a horseshoe link.
\end{proof}

\section{Some \(V\)-links that are \(W\)-links}
\label{sec:V-links-that-are-W-links}

The relationship between \(V\)-links and \(W\)-links is not simply one of
disjointness. Our first comparison result shows that the two families have a
natural infinite intersection. This is motivated by a conjecture of Kofman
that every horseshoe link should be a Lorenz link
\cite{KofmanHorseshoeBeamer}. In the setting considered here, the same
interpretation applies to R\"ossler links, since they are identified with
horseshoe links. The result below verifies Kofman's conjecture for a concrete
infinite family of \(W\)-links: these links can be rewritten as \(V\)-links,
and hence are Lorenz links.

For each \(j\geq 2\), the increasing and decreasing standard positive roots
have the same \(j\)-th power:
\[
(\sigma_1\sigma_2\cdots\sigma_{j-1})^j
=
(\sigma_{j-1}\sigma_{j-2}\cdots\sigma_1)^j.
\]
Indeed, both braids are the positive full twist on the first \(j\) strands.
Therefore, whenever the exponent of an intermediate block is a multiple of
the number of strands involved, an increasing block appearing in a
\(V\)-braid can be replaced by a decreasing block. This produces a
\(W\)-braid with the same closure.

\begin{theorem}\label{thm:W-multiple-exponents-is-V}
Let \(m\geq 2\), and let \(s_2,\dots,s_m\) be positive integers with
\(s_m\geq m\). Then the \(W\)-link
\[
W((2,\overline{2s_2}), (3,\overline{3s_3}), \dots,
(m-1,\overline{(m-1)s_{m-1}}), (m,\overline{s_m}))
\]
is equivalent to the \(V\)-link
\[
V((2,\overline{2s_2}), (3,\overline{3s_3}), \dots,
(m-1,\overline{(m-1)s_{m-1}}),(m,s_m)).
\]
Here the pairs indexed by \(2,\dots,m-1\) are omitted when \(m=2\). Thus, for
\(m=2\), the statement says that
\[
W((2,\overline{s_2}))=V((2,s_2)).
\]
\end{theorem}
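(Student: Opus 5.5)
The plan is to show that the two braids in the statement are conjugate in \(B_m\), via conjugation by the positive half twist \(\Delta_m\). Conjugate braids have isotopic closures, so this gives the equivalence.

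First I write both braids explicitly, with \(p=m\) and \(q=s_m\). The hypothesis \(s_m\geq m\) gives \(q\geq p\), so the \(V\)-link is well defined. The decreasing parameters are \(u_i=i+1\) for \(i=1,\dots,m-2\), which satisfy \(2\leq u_1<\dots<u_{m-2}\leq p\), and there are no increasing blocks. Thus
\[
\beta_V=\prod_{j=2}^{m-1}(\sigma_{m-1}\sigma_{m-2}\cdots\sigma_{m-j+1})^{js_j}\,(\sigma_1\sigma_2\cdots\sigma_{m-1})^{s_m},
\]
with the product taken in increasing order of \(j\). The \(W\)-braid has an empty initial sequence, so its defining conditions reduce to \(s_m\geq m\), and
\[
\beta_W=A_1^{2s_2}A_2^{3s_3}\cdots A_{m-2}^{(m-1)s_{m-1}}A_{m-1}^{s_m}.
\]

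Next I use the standard fact that conjugation by \(\Delta_m\) is the involution \(\sigma_i\mapsto\sigma_{m-i}\) of \(B_m\). Applying it generator by generator gives two identities.
\begin{itemize}
\item The factor \(\sigma_{m-1}\cdots\sigma_{m-j+1}\) goes to \(\sigma_1\sigma_2\cdots\sigma_{j-1}\).
\item The torus factor \(\sigma_1\cdots\sigma_{m-1}\) goes to \(\sigma_{m-1}\cdots\sigma_1=A_{m-1}\).
\end{itemize}
Since the map is a homomorphism, the order of the factors is preserved. Therefore
\[
\Delta_m\beta_V\Delta_m^{-1}=\prod_{j=2}^{m-1}(\sigma_1\sigma_2\cdots\sigma_{j-1})^{js_j}\,A_{m-1}^{s_m}.
\]

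Finally, I use the identity recalled just before the statement, \((\sigma_1\cdots\sigma_{j-1})^j=(\sigma_{j-1}\cdots\sigma_1)^j\), since both equal the full twist on the first \(j\) strands. Raising it to the power \(s_j\) gives \((\sigma_1\cdots\sigma_{j-1})^{js_j}=A_{j-1}^{js_j}\). Substituting this into each factor turns the conjugated braid into exactly \(\beta_W\). This is the only place where the hypothesis that the intermediate exponents are multiples of \(j\) is used. For \(m=2\) the intermediate product is empty, and the argument reduces to \(\sigma_1^{s_2}\) being invariant under the conjugation, which gives \(W((2,\overline{s_2}))=V((2,s_2))\).

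The step I expect to need the most care is the index bookkeeping. The \(V\)-notation places the decreasing blocks on the last \(u\) strands, while the \(W\)-notation places the blocks on the first strands. I need to confirm that the flip \(\sigma_i\mapsto\sigma_{m-i}\) sends the block of width \(j\) to the block of the same width \(j\), and that the exponents and their order are unchanged. The rest of the argument is formal.
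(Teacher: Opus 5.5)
Your argument is correct and is essentially the paper's proof: both rewrite the $V$-braid as $\bigl(\prod_{j=2}^{m-1}(\sigma_1\cdots\sigma_{j-1})^{js_j}\bigr)A_{m-1}^{s_m}$ and then apply $(\sigma_1\cdots\sigma_{j-1})^{j}=A_{j-1}^{j}$ to each block. The only difference is that you derive the first rewriting yourself by conjugating with $\Delta_m$, which sends $\sigma_{m-1}\cdots\sigma_{m-j+1}\mapsto\sigma_1\cdots\sigma_{j-1}$ and $\sigma_1\cdots\sigma_{m-1}\mapsto A_{m-1}$, whereas the paper cites Lemma~3.2 of \cite{de2024lorenz} for it.
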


\begin{proof}
For \(j\geq 2\), set
\[
P_j=\sigma_1\sigma_2\cdots\sigma_{j-1}
\qquad\text{and}\qquad
A_{j-1}=\sigma_{j-1}\sigma_{j-2}\cdots\sigma_1.
\]
Products below are ordered with increasing \(j\) from left to right.

The \(W\)-link in the statement is the closure of the braid
\[
\left(\prod_{j=2}^{m-1} A_{j-1}^{j s_j}\right) A_{m-1}^{s_m}.
\]
When \(m=2\), the product is empty, and the braid is simply \(A_1^{s_2}\).

Now consider the \(V\)-link
\[
V((2,\overline{2s_2}), (3,\overline{3s_3}), \dots,
(m-1,\overline{(m-1)s_{m-1}}),(m,s_m)).
\]
By \cite[Lemma~3.2]{de2024lorenz}, this \(V\)-link is represented by the
closure of the braid
\[
\left(\prod_{j=2}^{m-1} P_j^{j s_j}\right) A_{m-1}^{s_m}.
\]
For each \(j=2,\dots,m-1\), we have
\[
P_j^j=A_{j-1}^j,
\]
because both sides are the positive full twist on the first \(j\) strands.
Hence
\[
P_j^{j s_j}=A_{j-1}^{j s_j}.
\]
Applying this identity for every \(j=2,\dots,m-1\), the braid representing the
\(V\)-link becomes
\[
\left(\prod_{j=2}^{m-1} A_{j-1}^{j s_j}\right) A_{m-1}^{s_m}.
\]
This is exactly the defining braid of the \(W\)-link in the statement.
Therefore the two links are equivalent.
\end{proof}

Theorem~\ref{thm:W-multiple-exponents-is-V} gives a simple infinite family of
\(V\)-links which are also \(W\)-links. In the next section, we show that this
phenomenon is far from universal: a crossing-number obstruction proves that
many \(V\)-links cannot be represented by \(W\)-braids.

\section{Crossing-number obstructions and applications}
\label{sec:crossing-obstruction-V-W}

In this section we prove a crossing-number obstruction showing that many
\(V\)-links cannot be \(W\)-links. The main idea is that \(W\)-braids have a
large number of forced crossings, while many \(V\)-braids have comparatively
few crossings. Since both families are represented by positive minimal braids,
the number of crossings becomes an obstruction.

Before proving the obstruction, we recall that horseshoe links themselves are
not restricted to a single geometric type. Torus links, hyperbolic links, and
satellite links all occur among horseshoe links. For example, for \(q\geq 2\),
the \(W\)-link
\[
W((2,\overline{q}))
\]
is the torus link \(T(2,q)\). Hyperbolic horseshoe knots also occur; see, for
instance, \cite[Theorem~1.2]{twofulltwists} and
\cite[Theorem~7.10]{de2024lorenz}. Satellite examples occur as well; they can be constructed by applying the same
idea as in \cite[Theorem~6.3]{de2024lorenz}. 
Thus the obstruction proved below is not a
restriction on the possible geometric types of horseshoe links. Rather, it
detects specific Lorenz links of various geometric types which cannot be
realized in the horseshoe template.

\subsection{The obstruction}

We begin with an elementary consequence of Stallings' theorem.

\begin{lemma}\label{lem:crossing-number-positive-same-strands}
Let \(\beta\) and \(\beta'\) be positive braid representatives of the same link.
If \(\beta\) and \(\beta'\) have the same number of strands, then they have the
same number of crossings.
\end{lemma}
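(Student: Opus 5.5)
The plan is to use the Euler characteristic of the fibre surface, which Stallings' theorem makes available for closed positive braids. Let \(\beta\in B_p\) be a positive braid word with \(c\) crossings, and let \(L=\widehat{\beta}\) have \(\mu\) components.

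The first step is to apply Seifert's algorithm to the closed braid diagram \(\widehat{\beta}\). This produces a Seifert surface \(F_\beta\) built from \(p\) disks, one for each strand, joined by \(c\) half-twisted bands, one for each crossing. Hence
\[
\chi(F_\beta)=p-c.
\]

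The second step is to show that \(F_\beta\) is an invariant of the link, which is where Stallings' theorem enters. Since \(\beta\) is positive, \(\widehat{\beta}\) is a fibred link and \(F_\beta\) is a fibre. A positive braid is a product of positive Hopf plumbings, which gives another route to the same conclusion. For a non-split fibred link, the fibre is a minimal genus Seifert surface and is unique up to isotopy. Consequently \(\chi(F_\beta)\) equals the maximal Euler characteristic \(\chi(L)\) of a Seifert surface for \(L\), and this number depends only on \(L\). If one prefers to avoid the uniqueness statement, Bennequin's argument gives the same thing: the surface obtained from a positive braid by Seifert's algorithm has minimal genus. Then \(g(F_\beta)=g(L)\), and therefore \(\chi(F_\beta)=2-2g(L)-\mu\) is determined by \(L\).

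The third step applies this to both braids. Apply the second step to \(\beta\) and to \(\beta'\), which have \(p\) strands and \(c\), \(c'\) crossings respectively. This gives
\[
p-c=\chi(L)=p-c',
\]
so \(c=c'\).

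The main subtlety is split links. The Seifert surface of a closed positive braid is connected exactly when every generator \(\sigma_1,\dots,\sigma_{p-1}\) appears in the word; otherwise the closure is split and the surface can be disconnected. I would deal with this by using \(\chi\) of a possibly disconnected, maximal Euler characteristic Seifert surface, since that is still a link invariant. I would then apply the fibredness and minimality argument to each split piece, each of which is a closed positive braid on its own block of strands. Summing over the pieces gives \(\chi(F_\beta)=\chi(L)\) in general, and the crossing counts agree.
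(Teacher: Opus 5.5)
Your proposal is correct and follows the paper's proof. Seifert's algorithm on the closed braid gives \(\chi=p-c\); Stallings' theorem makes this surface a fibre of maximal Euler characteristic, so \(p-c\) is a link invariant, and equal strand numbers force equal crossing numbers. Your extra treatment of split closures is a point the paper passes over: when some \(\sigma_i\) is absent, Stallings' theorem does not apply literally, and one needs additivity of the maximal Euler characteristic under split union, which you use without stating it.
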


\begin{proof}
Let \(L\) be the link represented by both braids. If \(L\) is represented by a
positive braid with \(p\) strands and \(c\) crossings, then the Seifert surface
obtained from the braid has Euler characteristic
\[
\chi=p-c.
\]
By Stallings' theorem \cite{Genus}, this surface is a fiber surface and
realizes the maximal Euler characteristic among Seifert surfaces for \(L\).
Thus \(p-c\) is an invariant of the link \(L\).

Now suppose that \(\beta\) and \(\beta'\) have the same number \(p\) of
strands, and let their crossing numbers be \(c\) and \(c'\), respectively.
Since both are positive braid representatives of the same link, we have
\[
p-c=p-c'.
\]
Hence \(c=c'\).
\end{proof}

We now record a simple constraint on \(W\)-braids whose closures are knots.
This is where the condition \(s_m>w_1\), when the initial sequence is
nonempty, is used.

\begin{lemma}\label{lem:W-knot-last-exponent}
Let
\[
W((w_1,1),\dots,(w_k,1),
(2,\overline{s_2}),\dots,(m,\overline{s_m}))
\]
be a \(W\)-link, where the initial sequence \(w_1,\dots,w_k\) is allowed to be
empty. If this \(W\)-link is a knot, then
\[
s_m>m.
\]
\end{lemma}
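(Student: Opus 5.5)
The plan is to argue by contradiction using the permutation of the braid. By the definition of a \(W\)-braid we already have \(s_m\geq m\). So it suffices to show that \(s_m=m\) forces the closure to have at least two components. The degenerate one-strand \(W\)-braid has no final block \(A_{m-1}^{s_m}\), so the statement concerns non-degenerate \(W\)-braids with \(m\geq 2\); I would note this at the outset and then assume \(m\geq 2\).

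Suppose \(s_m=m\). The first step is to observe that the final factor \(A_{m-1}^{m}=(\sigma_{m-1}\cdots\sigma_1)^m\) is the positive full twist on the \(m\) strands. This was already used in Lemma~\ref{lem:W-braids-are-minimal} and in Section~\ref{sec:V-links-that-are-W-links}. The full twist is a pure braid, so it induces the identity permutation. Hence the permutation of the whole \(W\)-braid equals the permutation of
\[
(\sigma_1\cdots\sigma_{w_1-1})\cdots(\sigma_1\cdots\sigma_{w_k-1})\,A_1^{s_2}A_2^{s_3}\cdots A_{m-2}^{s_{m-1}}.
\]

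The second step is a span check on this prefix. The tail factors \(A_1,\dots,A_{m-2}\) involve only \(\sigma_1,\dots,\sigma_{m-2}\). For the initial blocks, this is exactly where the hypothesis \(s_m>w_1\) enters. If \(k>0\), then \(w_1<s_m=m\), so \(w_1\leq m-1\), and each block \(\sigma_1\cdots\sigma_{w_i-1}\) uses only generators up to \(\sigma_{m-2}\). Thus the span of the prefix is contained in \([1,m-1]\), and its permutation fixes the point \(m\).

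The last step draws the conclusion. The permutation of the full \(W\)-braid fixes \(m\), so the \(m\)-th strand closes up by itself into a component of the closure. Since \(m\geq 2\), the remaining strands, for instance strand \(1\), belong to at least one other component. The closure therefore has at least two components and is not a knot. This contradicts the hypothesis, so \(s_m\neq m\), and hence \(s_m>m\).

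The only delicate point is the span check for the initial blocks. Without the condition \(s_m>w_1\), a block of width \(w_1=m\) could move strand \(m\) and connect it to the other strands. The argument works precisely because that condition gives \(w_1\leq m-1\) when \(s_m=m\).
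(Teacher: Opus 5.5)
Your proposal is correct and follows the paper's proof. As there, you assume \(s_m=m\), note that the final block \(A_{m-1}^{m}\) is a full twist and so induces the identity permutation, and use \(s_m>w_1\) to get \(w_1\leq m-1\); hence strand \(m\) is fixed and the closure has at least two components. Your explicit exclusion of the degenerate one-strand \(W\)-braid is a small clarification the paper leaves implicit.
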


\begin{proof}
By definition of a \(W\)-link, we have \(s_m\geq m\). Suppose, for a
contradiction, that
\[
s_m=m.
\]

Consider the permutation induced by the \(W\)-braid on its \(m\) strands. The
number of components of the closure of a braid is equal to the number of
cycles of its induced permutation.

The last factor of the \(W\)-braid is
\[
(\sigma_{m-1}\sigma_{m-2}\cdots\sigma_1)^m.
\]
Since \(\sigma_{m-1}\sigma_{m-2}\cdots\sigma_1\) induces an \(m\)-cycle, this
factor induces the identity permutation.

If the initial sequence is empty, then every remaining factor
\[
(\sigma_{j-1}\sigma_{j-2}\cdots\sigma_1)^{s_j},
\qquad 2\leq j\leq m-1,
\]
involves only the first \(m-1\) strands. Hence the induced permutation fixes
the \(m\)-th strand.

If the initial sequence is nonempty, then the defining condition \(s_m>w_1\)
and the assumption \(s_m=m\) imply
\[
w_1<m.
\]
Thus every initial factor
\[
\sigma_1\sigma_2\cdots\sigma_{w_i-1}
\]
also involves only the first \(m-1\) strands. Again, the induced permutation
fixes the \(m\)-th strand.

Therefore the induced permutation has at least two cycles, so the closure has
at least two components. This contradicts the assumption that the closure is a
knot. Hence \(s_m>m\).
\end{proof}

We next estimate the minimum possible number of crossings of a \(W\)-braid.

\begin{lemma}\label{lem:minimum-crossings-W}
Let \(W\) be a non-degenerate \(W\)-braid on \(p\geq 2\) strands. Then
\[
c(W)\geq p(p-1)+\frac{(p-1)(p-2)}{2}
=
\frac{(p-1)(3p-2)}{2}.
\]
Moreover, if the closure of \(W\) is a knot, then
\[
c(W)\geq (p+1)(p-1)+\frac{(p-1)(p-2)}{2}
=
\frac{3p(p-1)}{2}.
\]
\end{lemma}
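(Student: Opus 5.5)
The plan is to count crossings directly from the defining word of a \(W\)-braid, using only positivity of the exponents and the constraints in the definition. A non-degenerate \(W\)-braid on \(p\) strands has the form
\[
(\sigma_1\cdots\sigma_{w_1-1})\cdots(\sigma_1\cdots\sigma_{w_k-1})\,
A_1^{s_2}A_2^{s_3}\cdots A_{p-1}^{s_p},
\]
with \(s_2,\dots,s_{p-1}\geq 1\) and \(s_p\geq p\). Since the word is positive, its number of crossings is just its length. The factor \(A_{j-1}\) has length \(j-1\), so the length is
\[
c(W)=\sum_{i=1}^k (w_i-1)+\sum_{j=2}^{p} s_j(j-1).
\]

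For the general bound, drop the initial blocks, since they contribute a nonnegative amount. Then bound the tail below:
\begin{itemize}
\item the last factor gives \(s_p(p-1)\geq p(p-1)\);
\item each intermediate factor with \(2\leq j\leq p-1\) gives at least \(j-1\), so together they give at least \(\sum_{j=2}^{p-1}(j-1)=\frac{(p-1)(p-2)}{2}\).
\end{itemize}
Adding these yields \(p(p-1)+\frac{(p-1)(p-2)}{2}=\frac{(p-1)(3p-2)}{2}\), where the last equality is a short algebraic check. When \(p=2\) there are no intermediate factors, and the second term vanishes consistently.

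For the knot case, replace \(s_p\geq p\) by \(s_p\geq p+1\), which holds by Lemma~\ref{lem:W-knot-last-exponent} applied with \(m=p\). The last factor then contributes at least \((p+1)(p-1)\), and the same intermediate estimate gives \((p+1)(p-1)+\frac{(p-1)(p-2)}{2}=\frac{(p-1)(3p)}{2}\).

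There is no real obstacle. The only point requiring care is confirming that \(c(W)\) means word length of the positive word rather than some link invariant; the statement concerns the braid \(W\), so word length is correct. The permutation argument needed for the knot case is already supplied by Lemma~\ref{lem:W-knot-last-exponent}.
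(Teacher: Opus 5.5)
Your proposal is correct and follows essentially the same argument as the paper. Both count crossings as the length of the positive word, discard the initial blocks, and bound the tail below by $1+2+\cdots+(p-2)+p(p-1)$; in the knot case, both upgrade $s_p\geq p$ to $s_p\geq p+1$ using Lemma~\ref{lem:W-knot-last-exponent}.
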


\begin{proof}
By definition, a non-degenerate \(W\)-braid on \(p\geq 2\) strands has the
form
\[
W=
(\sigma_1\sigma_2\cdots\sigma_{w_1-1})
\cdots
(\sigma_1\sigma_2\cdots\sigma_{w_k-1})
A_1^{a_2}A_2^{a_3}\cdots A_{p-1}^{a_p},
\]
where the initial product may be empty,
\[
a_2,\dots,a_{p-1}>0,
\qquad
a_p\geq p.
\]
If the initial product is nonempty, then also
\[
a_p>w_1.
\]

The horseshoe tail alone contributes at least
\[
1+2+\cdots+(p-2)+p(p-1)
\]
crossings. Therefore
\[
c(W)\geq
\frac{(p-1)(p-2)}{2}+p(p-1)
=
\frac{(p-1)(3p-2)}{2}.
\]

Now assume that the closure of \(W\) is a knot. By
Lemma~\ref{lem:W-knot-last-exponent}, we have
\[
a_p>p.
\]
Since \(a_p\) is an integer, \(a_p\geq p+1\). Hence the last horseshoe factor
\[
A_{p-1}^{a_p}=(\sigma_{p-1}\cdots\sigma_1)^{a_p}
\]
contributes at least
\[
(p+1)(p-1)
\]
crossings. Therefore
\[
c(W)\geq
\frac{(p-1)(p-2)}{2}+(p+1)(p-1)
=
\frac{3p(p-1)}{2}. \qedhere
\]
\end{proof}

We can now prove the crossing-number obstruction.

\begin{theorem}\label{thm:V-not-W-crossing-obstruction}
Let \(p,q\) be integers with \(2\leq p\leq q\), and let
\[
V=
V((u_1,\overline{v_1}),\dots,(u_m,\overline{v_m}),
(r_1,s_1),\dots,(r_n,s_n),(p,q))
\]
be a \(V\)-link, where the sequences \(u_1,\dots,u_m\) and
\(r_1,\dots,r_n\) are allowed to be empty. Whenever they are nonempty, assume
that
\[
2\leq u_1<\cdots<u_m\leq p
\]
and
\[
2\leq r_1<\cdots<r_n<p.
\]
Set
\[
E(V)=
(q-p)(p-1)
+\sum_{i=1}^{m}v_i(u_i-1)
+\sum_{j=1}^{n}s_j(r_j-1).
\]
Then the following hold.
\begin{enumerate}
\item If
\[
E(V)<\frac{(p-1)(p-2)}{2},
\]
then \(V\) is not a \(W\)-link.

\item If \(V\) is a knot and
\[
E(V)<\frac{p(p-1)}{2},
\]
then \(V\) is not a \(W\)-link.
\end{enumerate}
\end{theorem}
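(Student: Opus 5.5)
Argue by contradiction: if \(V\) were a \(W\)-link, compare crossing counts of two positive minimal braids on the same number of strands, namely the \(V\)-braid and the \(W\)-braid. Lemma~\ref{lem:crossing-number-positive-same-strands} then forces equal crossing numbers, and Lemma~\ref{lem:minimum-crossings-W} bounds the \(W\)-side from below.

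First, the \(V\)-braid \(\beta_V\) is a positive braid on \(p\) strands. Since \(q\geq p\), its final torus block \((\sigma_1\cdots\sigma_{p-1})^q\) contains a positive full twist. Hence, by Franks--Williams \cite[Corollary~2.4]{Franks} (as in the proof of Lemma~\ref{lem:W-braids-are-minimal}, or directly \cite{de2024lorenz}), \(\beta_V\) is minimal and the braid index of \(V\) is \(p\). Its crossing number is
\[
c(\beta_V)=q(p-1)+\sum_i v_i(u_i-1)+\sum_j s_j(r_j-1)=p(p-1)+E(V).
\]

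Second, suppose \(V\) is a \(W\)-link, i.e.\ the closure of some \(W\)-braid \(W\). By Lemma~\ref{lem:W-braids-are-minimal}, \(W\) is minimal, so it has \(b(V)=p\) strands. If \(p\geq 2\), \(W\) is non-degenerate. The degenerate one-strand case cannot occur, since \(p\geq 2\) is the braid index; I would remark that \(V\) is not the unknot because \(\beta_V\) contains a full twist on \(p\geq 2\) strands. Both \(\beta_V\) and \(W\) are positive \(p\)-strand braids representing the same link, so Lemma~\ref{lem:crossing-number-positive-same-strands} gives \(c(W)=c(\beta_V)=p(p-1)+E(V)\). Lemma~\ref{lem:minimum-crossings-W} gives \(c(W)\geq p(p-1)+\tfrac{(p-1)(p-2)}{2}\). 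Hence \(E(V)\geq \tfrac{(p-1)(p-2)}{2}\), contradicting the hypothesis of (1).

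For (2), if \(V\) is a knot, the knot case of Lemma~\ref{lem:minimum-crossings-W} gives \(c(W)\geq (p+1)(p-1)+\tfrac{(p-1)(p-2)}{2}\). Subtracting \(p(p-1)\) yields
\[
E(V)\geq (p-1)+\tfrac{(p-1)(p-2)}{2}=\tfrac{p(p-1)}{2},
\]
again a contradiction. The main point needing care is the equality of strand numbers, i.e.\ that both braids are minimal. That relies on the full-twist criterion of Franks--Williams for \(\beta_V\) together with Lemma~\ref{lem:W-braids-are-minimal}. The rest is bookkeeping of crossings, with each block \((\cdots)^{e}\) of width \(r\) contributing \(e(r-1)\) crossings.
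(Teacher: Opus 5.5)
Your proposal is correct and follows essentially the same route as the paper: both braids are minimal, so they have the same number \(p\) of strands; Lemma~\ref{lem:crossing-number-positive-same-strands} then equates their crossing counts, and the lower bounds of Lemma~\ref{lem:minimum-crossings-W} give the contradiction in each case. The only differences are minor. You justify minimality of the defining \(V\)-braid directly by the Franks--Williams full-twist criterion, which is valid since \(q\geq p\), instead of citing \cite[Theorem~3.9]{de2024lorenz}. You also state explicitly that the degenerate one-strand \(W\)-braid is ruled out, whereas the paper leaves this implicit.
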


\begin{proof}
Let \(c(V)\) denote the number of crossings of the defining \(V\)-braid. Then
\[
c(V)
=
q(p-1)
+\sum_{i=1}^{m}v_i(u_i-1)
+\sum_{j=1}^{n}s_j(r_j-1).
\]
Equivalently,
\[
c(V)=p(p-1)+E(V).
\]

We first prove item (1). Suppose that
\[
E(V)<\frac{(p-1)(p-2)}{2}.
\]
Then
\[
c(V)<
p(p-1)+\frac{(p-1)(p-2)}{2}
=
\frac{(p-1)(3p-2)}{2}.
\]

Assume, for contradiction, that \(V\) is a \(W\)-link. By
\cite[Theorem~3.9]{de2024lorenz}, the defining \(V\)-braid is minimal. Also,
every \(W\)-braid is minimal by Lemma~\ref{lem:W-braids-are-minimal}. Hence
any \(W\)-braid representing \(V\) must have the same number of strands as the
defining \(V\)-braid, namely \(p\).

By Lemma~\ref{lem:minimum-crossings-W}, every non-degenerate \(W\)-braid on
\(p\) strands has at least
\[
\frac{(p-1)(3p-2)}{2}
\]
crossings. Therefore any \(W\)-braid representative of \(V\) has strictly more
crossings than the defining \(V\)-braid. This contradicts
Lemma~\ref{lem:crossing-number-positive-same-strands}, since the defining
\(V\)-braid and the \(W\)-braid would be positive braid representatives of the
same link with the same number of strands. Thus \(V\) is not a \(W\)-link.

We now prove item (2). Suppose that \(V\) is a knot and
\[
E(V)<\frac{p(p-1)}{2}.
\]
Then
\[
c(V)<
p(p-1)+\frac{p(p-1)}{2}
=
\frac{3p(p-1)}{2}.
\]

Assume again, for contradiction, that \(V\) is a \(W\)-link. Since \(V\) is a
knot, any \(W\)-braid representing \(V\) has closure a knot. As before, the
minimality of the defining \(V\)-braid and of \(W\)-braids implies that such a
\(W\)-braid has \(p\) strands.

By the knot case of Lemma~\ref{lem:minimum-crossings-W}, every non-degenerate
\(W\)-braid on \(p\) strands whose closure is a knot has at least
\[
\frac{3p(p-1)}{2}
\]
crossings. Therefore any \(W\)-braid representative of \(V\) has strictly more
crossings than the defining \(V\)-braid. This again contradicts
Lemma~\ref{lem:crossing-number-positive-same-strands}.

Hence \(V\) is not a \(W\)-link.
\end{proof}

\subsection{Immediate consequences}

The obstruction immediately gives many \(V\)-links which are not \(W\)-links.

\begin{corollary}\label{cor:many-V-not-W}
Let \(p\geq 3\). Consider a \(V\)-link
\[
V((u_1,\overline{v_1}),\dots,(u_m,\overline{v_m}),
(r_1,s_1),\dots,(r_n,s_n),(p,p)),
\]
where the sequences \(u_1,\dots,u_m\) and \(r_1,\dots,r_n\) are allowed to be
empty. Whenever they are nonempty, assume that
\[
2\leq u_1<\cdots<u_m\leq p
\]
and
\[
2\leq r_1<\cdots<r_n<p.
\]
If
\[
\sum_{i=1}^{m}v_i(u_i-1)
+\sum_{j=1}^{n}s_j(r_j-1)
<
\frac{(p-1)(p-2)}{2},
\]
then this \(V\)-link is not a \(W\)-link.
\end{corollary}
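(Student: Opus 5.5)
This is a direct specialization of Theorem~\ref{thm:V-not-W-crossing-obstruction}, part (1), to the case \(q=p\). We set \(q=p\) in the definition of \(E(V)\). The torus term \((q-p)(p-1)\) then vanishes, which leaves
\[
E(V)=\sum_{i=1}^{m}v_i(u_i-1)+\sum_{j=1}^{n}s_j(r_j-1).
\]
The hypothesis of the corollary is therefore exactly the inequality
\[
E(V)<\frac{(p-1)(p-2)}{2}
\]
required in item (1) of Theorem~\ref{thm:V-not-W-crossing-obstruction}.

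We then check that the standing assumptions of that theorem hold. We have \(2\leq p\leq q\) with \(q=p\). The conditions on the sequences \(u_i\) and \(r_j\) are copied verbatim from the theorem.

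The role of \(p\geq 3\) is only to make the bound meaningful. For \(p=2\), the right-hand side \((p-1)(p-2)/2\) equals \(0\), and no nonnegative \(E(V)\) can be smaller. The hypothesis would then be vacuous rather than false, so the restriction costs nothing.

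Applying item (1) yields that the \(V\)-link is not a \(W\)-link. I expect no genuine obstacle. The only point deserving a remark is that the theorem's proof relies on minimality of the defining \(V\)-braid, via \cite[Theorem~3.9]{de2024lorenz}, in the boundary case \(q=p\). Here the final block \((\sigma_1\cdots\sigma_{p-1})^p\) is still a full twist, so that minimality result and Lemma~\ref{lem:crossing-number-positive-same-strands} apply without change.
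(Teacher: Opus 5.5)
Your proposal is correct and is exactly the paper's argument: the corollary is the special case \(q=p\) of Theorem~\ref{thm:V-not-W-crossing-obstruction}(1), where the term \((q-p)(p-1)\) vanishes and the hypothesis becomes \(E(V)<\frac{(p-1)(p-2)}{2}\). Your side remarks are accurate but not needed for the argument: the case \(p=2\) is vacuous, and the block \((\sigma_1\cdots\sigma_{p-1})^p\) is still a full twist.
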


\begin{proof}
This is the special case \(q=p\) of
Theorem~\ref{thm:V-not-W-crossing-obstruction}.
\end{proof}

\begin{remark}
Corollary~\ref{cor:many-V-not-W} gives a large supply of \(V\)-links which are
not \(W\)-links. For fixed \(p\geq 3\), any choice of additional \(V\)-blocks
whose total crossing contribution is smaller than
\[
\frac{(p-1)(p-2)}{2}
\]
gives a \(V\)-link which cannot be represented by a \(W\)-braid.
\end{remark}

The same obstruction also recovers a Holmes--Williams type obstruction for
torus links.

\begin{proposition}\label{prop:torus-link-crossing-obstruction}
Let \(2\leq p<q\). If the torus link \(T(p,q)\) is a \(W\)-link, then
\[
2q\geq 3p-2.
\]
If \(T(p,q)\) is a knot and is a \(W\)-link, then
\[
2q\geq 3p.
\]
\end{proposition}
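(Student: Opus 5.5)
We apply Theorem~\ref{thm:V-not-W-crossing-obstruction} to the torus link \(T(p,q)\) viewed as a \(V\)-link with both additional sequences empty. Since \(2\leq p<q\), the torus link \(T(p,q)\) is the closure of \((\sigma_1\sigma_2\cdots\sigma_{p-1})^q\). This is exactly the defining braid of
\[
V((p,q)),
\]
with \(m=n=0\). In that case the quantity from the theorem is
\[
E(V)=(q-p)(p-1).
\]
The plan is to argue by contraposition: assume the inequality in the conclusion fails, check that the hypothesis of the appropriate item of Theorem~\ref{thm:V-not-W-crossing-obstruction} then holds, and conclude that \(T(p,q)\) is not a \(W\)-link.

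For the link statement, suppose \(2q<3p-2\), that is, \(2(q-p)<p-2\). If \(p=2\), this says \(2q<4\), which contradicts \(q>p=2\), so the claim is vacuous; hence assume \(p\geq 3\). Multiplying by \((p-1)>0\) gives
\[
(q-p)(p-1)<\frac{(p-1)(p-2)}{2}.
\]
This is item~(1) of Theorem~\ref{thm:V-not-W-crossing-obstruction}, so \(T(p,q)\) is not a \(W\)-link. Contrapositively, if \(T(p,q)\) is a \(W\)-link, then \(2q\geq 3p-2\).

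For the knot statement, suppose \(T(p,q)\) is a knot and \(2q<3p\), that is, \(2(q-p)<p\). Multiplying by \(p-1\) gives
\[
(q-p)(p-1)<\frac{p(p-1)}{2},
\]
and item~(2) of the theorem shows \(T(p,q)\) is not a \(W\)-link.

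The only point needing care is the identification of \(T(p,q)\) with the \(V\)-link \(V((p,q))\) satisfying the standing hypotheses \(2\leq p\leq q\). This holds because the \(V\)-braid with empty sequences is literally \((\sigma_1\cdots\sigma_{p-1})^q\). The minimality used inside the theorem then comes from the cited result \cite[Theorem~3.9]{de2024lorenz}, or directly from Franks--Williams, since \(q\geq p\) gives a full twist. Beyond that, the argument is elementary arithmetic, and I expect no real obstacle.
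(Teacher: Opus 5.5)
Your proposal is correct and follows the paper's proof essentially verbatim. You identify \(T(p,q)\) with \(V((p,q))\), compute \(E(V)=(q-p)(p-1)\), and show that the failure of each inequality gives the hypothesis of the corresponding item of Theorem~\ref{thm:V-not-W-crossing-obstruction}; your separate treatment of \(p=2\) is harmless but unnecessary, since multiplying by \(p-1=1>0\) is already valid there.
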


\begin{proof}
The torus link \(T(p,q)\) is represented by the positive \(p\)-braid
\[
(\sigma_1\sigma_2\cdots\sigma_{p-1})^q.
\]
In the notation of Theorem~\ref{thm:V-not-W-crossing-obstruction}, this is the
\(V\)-link \(V((p,q))\), and
\[
E(V)=(q-p)(p-1).
\]

If
\[
2q<3p-2,
\]
then
\[
(q-p)(p-1)<\frac{(p-1)(p-2)}{2}.
\]
By Theorem~\ref{thm:V-not-W-crossing-obstruction}, \(T(p,q)\) is not a
\(W\)-link. Hence, if \(T(p,q)\) is a \(W\)-link, then
\[
2q\geq 3p-2.
\]

If \(T(p,q)\) is a knot and
\[
2q<3p,
\]
then
\[
(q-p)(p-1)<\frac{p(p-1)}{2}.
\]
The knot case of Theorem~\ref{thm:V-not-W-crossing-obstruction} implies that
\(T(p,q)\) is not a \(W\)-link. Therefore, if the torus knot \(T(p,q)\) is a
\(W\)-link, then
\[
2q\geq 3p. \qedhere
\]
\end{proof}

\begin{remark}
Using the characterization of horseshoe links by \(W\)-braids,
Proposition~\ref{prop:torus-link-crossing-obstruction} recovers the
Holmes--Williams numerical obstruction for torus knots: if \(T(p,q)\), with
\(p<q\), is a horseshoe knot, then
\[
3p\leq 2q.
\]
For torus links with more than one component, the same crossing-number
mechanism gives the link analogue
\[
2q\geq 3p-2.
\]
\end{remark}

\subsection{Hyperbolic \(V\)-knots which are not \(W\)-links}

We now apply the obstruction to construct hyperbolic examples.

\begin{lemma}\label{lem:hyperbolic-V-2-s1-p-pplus1}
Let \(p\geq 4\), and let \(s_1\) be a positive even integer. Then the
\(V\)-link
\[
V((2,s_1),(p,p+1))
\]
is a hyperbolic knot.
\end{lemma}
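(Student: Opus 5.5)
We split the statement into two claims: the closure of the braid
\[
\beta=(\sigma_1)^{s_1}(\sigma_1\sigma_2\cdots\sigma_{p-1})^{p+1}
\]
is a knot, and this knot is hyperbolic.

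\emph{The closure is a knot.} We compute the induced permutation of $\beta$. The block $(\sigma_1\cdots\sigma_{p-1})^{p+1}$ equals a full twist times $\sigma_1\cdots\sigma_{p-1}$, so it induces a $p$-cycle. The factor $\sigma_1^{s_1}$ induces the identity, because $s_1$ is even. Hence the permutation of $\beta$ is a single $p$-cycle, and the closure has one component.

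\emph{The knot is hyperbolic.} By Thurston's trichotomy, it suffices to exclude torus knots and satellite knots. We use the reinterpretation of such $V$-links as T-links or Lorenz links in the sense of Birman--Kofman \cite{newtwis}. Passing to T-link notation, the knot is $T((2,s_1),(p,p+1))$. Equivalently, it is the torus knot $T(p,p+1)$ with $s_1$ additional full twists on two adjacent strands, since $\sigma_1^{s_1}$ is $s_1/2$ full twists.

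We then invoke the hyperbolicity results for Lorenz links with few generalized twists. The paper already cites \cite[Theorem~7.10]{de2024lorenz} and \cite[Theorem~1.2]{twofulltwists}, and we expect these to give hyperbolicity of $T((r_1,s_1),(p,q))$ for $r_1=2$, $s_1$ even and $p,q$ coprime with $q\equiv 1 \pmod p$, under a mild lower bound such as $p\geq 4$. If that citation does not apply verbatim, we argue directly with a two-step fallback.

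\emph{Fallback, step one: not a torus knot.} Since $\beta$ is a positive minimal braid (\cite[Theorem~3.9]{de2024lorenz}), the braid index is $p$. The genus is $g=(c-p+1)/2$ with $c=s_1+(p+1)(p-1)$, by Lemma~\ref{lem:crossing-number-positive-same-strands} and Stallings' theorem. A torus knot of braid index $p$ is $T(p,q')$ with $q'>p$ and genus $(p-1)(q'-1)/2$. Comparing the two genus formulas forces
\[
s_1+p^2-p=(p-1)(q'-1),
\]
so $(p-1)\mid s_1$ and $q'=p+1+s_1/(p-1)$. In that case a second invariant must rule out the match. Candidates are the Alexander polynomial, or the fact that torus knots have unique positive braid presentations on the minimal number of strands up to conjugacy, which the $V$-braid fails. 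This step is the one we regard as delicate.

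\emph{Fallback, step two: not a satellite.} Suppose the knot were a satellite. By the results of de~Paiva on satellite Lorenz knots \cite{de2021satellites}, satellites among T-links of this shape arise only when the twisted sub-braid forms a cable pattern, which requires $r_1$ to divide $p$-related data. The single small block $\sigma_1^{s_1}$ acting on a $T(p,p+1)$ with $p\geq 4$ does not fit that pattern.

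The main obstacle is this exclusion of satellites and torus knots. We expect to rely on the cited hyperbolicity theorem rather than reprove it. The hypotheses $p\geq 4$ and $s_1$ even match the range in which those twisted-torus-knot results apply.
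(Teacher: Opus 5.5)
Your permutation argument that the closure is a knot is correct and matches the paper. The gap is the hyperbolicity step, which is the entire substance of the lemma and which you never establish. You only ``expect'' \cite[Theorem~7.10]{de2024lorenz} or \cite[Theorem~1.2]{twofulltwists} to apply, without checking their hypotheses. The case $s_1=2$ is exactly where such general results are least likely to reach: there $\sigma_1^{s_1}$ is a single full twist on two strands, and the block $(\sigma_1\cdots\sigma_{p-1})^{p+1}$ contains only one full twist on $p$ strands. The paper's proof is a two-case citation. For $s_1\geq 4$ (at least two full twists on two adjacent strands of $T(p,p+1)$) it uses Lee's theorem on twisted torus knots \cite[Theorem~1.1]{MR3801447}. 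For $s_1=2$ it uses the separate result \cite[Corollary~8.6]{de2024lorenz}.

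Your fallback does not close the gap. In step one, the genus comparison leaves the case $(p-1)\mid s_1$, $q'=p+1+s_1/(p-1)$, and you never supply a second invariant. This residual case is not harmless, and it is where $p\geq 4$ has to enter. For $p=3$, $s_1=2$, centrality of $(\sigma_1\sigma_2)^3$ in $B_3$ gives $\sigma_1^2(\sigma_1\sigma_2)^4=(\sigma_1\sigma_2)^3\sigma_1^3\sigma_2$, while $(\sigma_1\sigma_2)^5=(\sigma_1\sigma_2)^3\sigma_1^2\sigma_2\sigma_1$, and these are conjugate by $\sigma_1$. So the closure is the torus knot $T(3,5)$, with precisely the $q'$ your formula predicts. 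Your assertion that the $V$-braid is not conjugate to a torus braid therefore needs a real argument, and nothing in your sketch uses $p\geq 4$.

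Step two cites no precise theorem from \cite{de2021satellites}. Its heuristic, that one small block $\sigma_1^{s_1}$ on a torus braid cannot produce a satellite, is false in general: Lemma~\ref{lem:V-2-s1-p-pplus2-satellite} shows $V((2,s_1),(p,p+2))$ is a satellite for every even $p\geq 4$. Whether a satellite occurs depends on $q$. Ruling it out for $q=p+1$ is precisely the hard content of the twisted-torus-knot results the paper cites. Without them, or a complete replacement, your proposal does not prove hyperbolicity.
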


\begin{proof}
First, the assumption that \(s_1\) is even implies that this \(V\)-link is a
knot. Indeed, the factor \((\sigma_1)^{s_1}\) induces the identity
permutation on the strands, while
\[
(\sigma_1\sigma_2\cdots\sigma_{p-1})^{p+1}
\]
induces a \(p\)-cycle, since \(\gcd(p,p+1)=1\). Hence the induced permutation
of the whole braid is a \(p\)-cycle, and the closure is a knot.

If \(s_1\geq 4\), then the result follows from
\cite[Theorem~1.1]{MR3801447}. It remains to consider the case \(s_1=2\).
In this case, the result follows from \cite[Corollary~8.6]{de2024lorenz}.
Hence \(V((2,s_1),(p,p+1))\) is hyperbolic for every positive even integer
\(s_1\).
\end{proof}

\begin{theorem}\label{thm:hyperbolic-V-2-s1-p-pplus1-not-W}
Let \(s_1\) be a positive even integer, and let \(p\geq 4\). Suppose that
\[
(p-1)(p-2)>2s_1.
\]
Then the \(V\)-knot
\[
V((2,s_1),(p,p+1))
\]
is hyperbolic and is not a \(W\)-link. Consequently, it cannot be embedded in
the horseshoe template, and hence it is not a R\"ossler link.
\end{theorem}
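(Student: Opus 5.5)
Hyperbolicity and the fact that the link is a knot come directly from Lemma~\ref{lem:hyperbolic-V-2-s1-p-pplus1}, since \(p\geq 4\) and \(s_1\) is a positive even integer. The remaining work is to place \(V((2,s_1),(p,p+1))\) in the notation of Theorem~\ref{thm:V-not-W-crossing-obstruction} and check its numerical hypothesis. Here the sequence \(u_1,\dots,u_m\) is empty. The increasing sequence is the single pair \((r_1,s_1)=(2,s_1)\), which satisfies \(2\leq r_1<p\) because \(p\geq 4\). The final block is \((p,q)=(p,p+1)\), so \(2\leq p\leq q\) holds.

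Computing the quantity \(E(V)\) gives
\[
E(V)=(q-p)(p-1)+s_1(r_1-1)=(p-1)+s_1 .
\]
We want item (2) of Theorem~\ref{thm:V-not-W-crossing-obstruction}, the knot case, which requires \(E(V)<p(p-1)/2\). This is equivalent to \(2(p-1)+2s_1<p(p-1)\), that is, \(2s_1<(p-1)(p-2)\). That is exactly the assumed inequality \((p-1)(p-2)>2s_1\). Since \(V\) is a knot, item (2) applies, so \(V\) is not a \(W\)-link.

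It remains to pass from "not a \(W\)-link" to "not carried by the horseshoe template." Suppose \(V\) were carried by the horseshoe template. After deleting boundary-parallel trivial components (Remark~\ref{rem:boundary-parallel-components}), Theorem~\ref{thm:minimal-horseshoe-W} would give a \(W\)-braid representative. For a hyperbolic knot no component can be deleted, since such a component would be a split unknot. The degenerate one-strand \(W\)-braid closes to the unknot, which is not hyperbolic. So \(V\) would be a non-degenerate \(W\)-link, a contradiction. Finally, since R\"ossler links are identified with horseshoe links in this paper, \(V\) is not a R\"ossler link.

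There is no real obstacle here. The only points needing care are the exact arithmetic equivalence between \(E(V)<p(p-1)/2\) and the hypothesis \(2s_1<(p-1)(p-2)\), and the small step that rules out discarded trivial components and the degenerate \(W\)-braid in the horseshoe case.
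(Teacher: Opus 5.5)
Your proposal is correct and follows the paper's proof essentially step for step: hyperbolicity and the knot property from Lemma~\ref{lem:hyperbolic-V-2-s1-p-pplus1}, the computation $E(V)=p-1+s_1$ and its equivalence with $2s_1<(p-1)(p-2)$, the knot case of Theorem~\ref{thm:V-not-W-crossing-obstruction}, and then Theorem~\ref{thm:minimal-horseshoe-W} to pass to the horseshoe and R\"ossler templates. Your added step ruling out deleted boundary-parallel components and the degenerate one-strand $W$-braid is a harmless refinement that the paper leaves implicit; the degenerate case is in any case excluded by the braid-index comparison inside the proof of the obstruction theorem.
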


\begin{proof}
By Lemma~\ref{lem:hyperbolic-V-2-s1-p-pplus1}, the \(V\)-link
\[
V((2,s_1),(p,p+1))
\]
is a hyperbolic knot.

It remains to show that it is not a \(W\)-link. For this \(V\)-knot, we have
\[
q=p+1,\qquad r_1=2.
\]
Thus
\[
E(V)
=
(q-p)(p-1)+s_1(r_1-1)
=
p-1+s_1.
\]
The hypothesis
\[
(p-1)(p-2)>2s_1
\]
is equivalent to
\[
p-1+s_1<\frac{p(p-1)}{2}.
\]
Since \(V((2,s_1),(p,p+1))\) is a knot, the knot case of
Theorem~\ref{thm:V-not-W-crossing-obstruction} implies that it is not a
\(W\)-link.

Finally, by Theorem~\ref{thm:minimal-horseshoe-W}, every horseshoe link admits
a minimal \(W\)-braid representative. Therefore, if this link were a horseshoe
link, it would be a \(W\)-link, contradicting what we have just proved. Hence
it is not a horseshoe link, and therefore it is not a R\"ossler link.
\end{proof}

\begin{corollary}\label{cor:infinitely-many-hyperbolic-V-not-W}
There are infinitely many hyperbolic \(V\)-knots which are not \(W\)-links.
More precisely, for every \(p\geq 4\), the \(V\)-knot
\[
V((2,2),(p,p+1))
\]
is hyperbolic and is not a \(W\)-link. Consequently, it is not a horseshoe link
and hence not a R\"ossler link.
\end{corollary}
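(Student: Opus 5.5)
The plan is to specialize Theorem~\ref{thm:hyperbolic-V-2-s1-p-pplus1-not-W} to \(s_1=2\) and let \(p\) range over all integers \(p\geq 4\). First, I check the hypotheses: \(s_1=2\) is a positive even integer, and the condition
\[
(p-1)(p-2)>2s_1=4
\]
holds for every \(p\geq 4\). Indeed, the left-hand side is increasing in \(p\geq 2\), and at \(p=4\) it equals \(6>4\). Hence, for each \(p\geq 4\), Theorem~\ref{thm:hyperbolic-V-2-s1-p-pplus1-not-W} shows that \(V((2,2),(p,p+1))\) is a hyperbolic knot which is not a \(W\)-link. The same theorem, using Theorem~\ref{thm:minimal-horseshoe-W}, also shows that it is neither a horseshoe link nor a R\"ossler link.

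It remains to see that these knots are pairwise distinct, so that the family is infinite. The defining \(V\)-braid of \(V((2,2),(p,p+1))\) has \(p\) strands and is minimal by \cite[Theorem~3.9]{de2024lorenz}, so its braid index is exactly \(p\). Different values of \(p\) therefore give inequivalent knots. As an independent check, one can use the invariant \(p-c\) from the proof of Lemma~\ref{lem:crossing-number-positive-same-strands}. Here
\[
c=2+(p+1)(p-1)=p^2+1,
\]
so \(\chi=p-p^2-1\), which is strictly decreasing in \(p\). Either argument distinguishes the knots.

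The main obstacle is only this distinctness step, and it is handled directly by the braid index (or by the Euler characteristic). Everything else is an immediate specialization of the preceding theorem.
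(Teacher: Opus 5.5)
Your proposal is correct and is essentially the paper's proof. The paper checks \(E(V)=p+1<p(p-1)/2\) for \(p\geq 4\) and invokes the knot case of Theorem~\ref{thm:V-not-W-crossing-obstruction} directly, which is exactly what specializing Theorem~\ref{thm:hyperbolic-V-2-s1-p-pplus1-not-W} to \(s_1=2\) does. It then distinguishes the knots by their braid index \(p\), as you do. Your extra Euler-characteristic check \(\chi=p-p^2-1\) is also valid, and it has the small advantage of not depending on the minimality theorem for \(V\)-braids.
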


\begin{proof}
By Lemma~\ref{lem:hyperbolic-V-2-s1-p-pplus1}, the \(V\)-link
\[
V((2,2),(p,p+1))
\]
is a hyperbolic knot for every \(p\geq 4\).

For
\[
V=V((2,2),(p,p+1)),
\]
we have
\[
q=p+1,\qquad r_1=2,\qquad s_1=2.
\]
Thus
\[
E(V)=(q-p)(p-1)+s_1(r_1-1)=p+1.
\]
Since
\[
p+1<\frac{p(p-1)}{2}
\]
for every \(p\geq 4\), the knot case of
Theorem~\ref{thm:V-not-W-crossing-obstruction} implies that
\[
V((2,2),(p,p+1))
\]
is not a \(W\)-link. Therefore it is not a horseshoe link by
Theorem~\ref{thm:minimal-horseshoe-W}, and hence it is not a R\"ossler link.

The knots in this family are pairwise distinct because their defining
\(V\)-braids have \(p\) strands and are minimal braid representatives. Thus
their braid indices are equal to \(p\), and different values of \(p\) give
different knots.
\end{proof}

\subsection{Hyperbolic Lorenz links with more than one component}

We now construct hyperbolic examples with more than one component.

\begin{lemma}\label{lem:hyperbolic-V-3-3s1-2p-2pplus2}
Let \(p\geq 3\) be a prime number, and let \(s_1>3\) be an integer. Then the
\(V\)-link
\[
V((3,3s_1),(2p,2p+2))
\]
is hyperbolic.
\end{lemma}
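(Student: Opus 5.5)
The plan is to view \(V((3,3s_1),(2p,2p+2))\) as a Dehn filling of a fixed hyperbolic link, and to obtain hyperbolicity from a twisting theorem for Lorenz links rather than by working directly with the braid. By \cite[Lemma~3.2]{de2024lorenz} (or directly from the definition of a \(V\)-braid), the link is the closure of
\[
(\sigma_1\sigma_2)^{3s_1}(\sigma_1\sigma_2\cdots\sigma_{2p-1})^{2p+2}.
\]
Since \((\sigma_1\sigma_2)^3\) is the positive full twist on the first three strands, the first factor is \(s_1\) full twists on three adjacent strands. Thus the link is obtained from the torus link \(T(2p,2p+2)\) by \(-1/s_1\) surgery on an unknotted circle \(C\) that bounds a disk meeting the first three strands of the standard braid. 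Equivalently, it is the \((-1/s_1)\)-filling of \(S^3\setminus (T(2p,2p+2)\cup C)\).

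The first step is to record the structure of \(T(2p,2p+2)\). Since \(\gcd(2p,2p+2)=2\) (using that \(p\) is odd), it has two components, each a \(T(p,p+1)\) torus knot. The three encircled strands are therefore not all strands of a single component in a way compatible with the torus structure. The second step is to show that \(M=S^3\setminus (T(2p,2p+2)\cup C)\) is hyperbolic, for which I would invoke the twisted torus link results of de Paiva and Purcell \cite{dePaivaPurcell2024,de2021satellites}. The relevant statement, as I recall it, is that for \(1<r<p'\), the complement of \(T(p',q')\) together with an augmenting circle around \(r\) strands is hyperbolic unless \(r\) is compatible with the cabling or torus structure. Roughly, the exceptions are when the \(r\) strands form a union of full parallel strand-sets of sub-torus-links, which here would need \(r\) to be a multiple of \(p'/\gcd(p',q')=p\) in a compatible position. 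With \(r=3\) and \(p'=2p\), I will check these hypotheses, using \(p\) prime to exclude essential annuli and tori. The case \(p=3\), where \(r=p\), deserves a separate check: there the three strands could a priori be one full \(T(3,4)\) component.

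The third step passes from \(M\) to the filling. Here I would apply the explicit form of the twisting theorem in \cite{dePaivaPurcell2024}, proved via the \(6\)-theorem of Agol and Lackenby using a horoball neighbourhood of \(C\) with slope length greater than \(6\) once \(|s_1|>3\). This would give that the \(-1/s_1\) filling is hyperbolic for all \(s_1>3\), which matches the hypothesis \(s_1>3\) in the statement.

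The main obstacle is the second step, namely verifying that \(T(2p,2p+2)\cup C\) is hyperbolic, and in particular ruling out satellite behaviour. Since \(T(2p,2p+2)\) consists of two parallel \(T(p,p+1)\) cables, a circle around strands of only one component could produce a satellite; indeed, twisting \(p\) strands of one component would give a cable. The primality of \(p\) and the choice \(r=3\) are designed to prevent this. If the cited theorem does not cover this case verbatim, I would fall back on the satellite classification of \cite{de2021satellites}. The plan would then be to show that a satellite structure forces the encircled strands to be a union of full components' strand sets, which is impossible for \(r=3\) unless \(p=3\). In that case, the positions of the strands in the braid show that the three first strands alternate between the two components, so this exception does not arise either.
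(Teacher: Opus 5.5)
\textbf{There is a genuine gap.} The paper does not build this result from scratch. It reads the lemma off directly from \cite[Theorem~1.1]{Pai1}, a hyperbolicity theorem for twisted torus links that applies to \(T(2p,2p+2)\) with \(s_1\) full twists on three adjacent strands. Your Dehn-filling outline is a reasonable strategy. As written, however, neither of its two load-bearing steps is established, and the first rests on a misremembered exceptional case.

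\emph{Step 2 targets the wrong exceptional case.} You place the danger at \(r\) being a multiple of \(p'/\gcd(p',q')\), i.e.\ \(C\) enclosing the strands of a single component. That configuration never occurs for adjacent strands, since adjacent strands run through the components cyclically. The real danger comes from the cabling structure. If \(d=\gcd(p',q')>1\), then \(T(p',q')\) is \(d\) parallel copies of \(T(p'/d,q'/d)\). A circle around \(d\) adjacent strands then lies in a solid torus \(V\) around the companion that contains all components, so \(\partial V\) is an essential torus when the companion is knotted. This is exactly the paper's Lemma~\ref{lem:V-2-s1-p-pplus2-satellite}: encircling two strands of \(T(p,p+2)\), with \(p\) even, gives satellites for every \(s_1\). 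Yet \(2\) is not a multiple of \(p/2\) once \(p\geq 6\), so your criterion would wrongly predict hyperbolicity there.

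For \(T(2p,2p+2)\), what you must rule out is compatibility of the encircled strands with the pairing of adjacent strands (\(d=2\)). That \(r=3\) is odd is the right reason to expect hyperbolicity. You still owe an actual argument excluding essential spheres, discs, tori and annuli in \(S^3\setminus(T(2p,2p+2)\cup C)\). The role you assign to the primality of \(p\) is also never explained.

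\emph{Step 3 is asserted, not derived.} The \(6\)-theorem needs an explicit lower bound on the length of the slope \(-1/s_1\) on a horoball cusp of \(C\). Such bounds come from explicit cusp geometry, as for crossing circles in augmented links. They do not follow from hyperbolicity of \(S^3\setminus(T(2p,2p+2)\cup C)\) alone. That alone, via Thurston's hyperbolic Dehn surgery theorem, gives hyperbolicity only for all but finitely many \(s_1\), not for every \(s_1>3\). Attributing the threshold \(s_1>3\) to an unstated ``explicit form'' of a theorem in \cite{dePaivaPurcell2024} does not close this gap.

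To turn the outline into a proof, either supply both steps in full, or do as the paper does: invoke \cite[Theorem~1.1]{Pai1} and check that its hypotheses cover prime \(p\geq 3\), three twisted strands, and \(s_1>3\).
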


\begin{proof}
This follows directly from \cite[Theorem~1.1]{Pai1}.
\end{proof}

\begin{theorem}\label{thm:infinitely-many-hyperbolic-Lorenz-links-not-horseshoe}
There are infinitely many hyperbolic Lorenz links with more than one component
which are not horseshoe links. More precisely, for every prime \(p\geq 7\), the
\(V\)-link
\[
V((3,12),(2p,2p+2))
\]
is a hyperbolic Lorenz link with two components, and it cannot be embedded in
the horseshoe template. Consequently, it is not a R\"ossler link.
\end{theorem}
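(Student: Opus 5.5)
The plan is to specialize the earlier results to \(s_1=4\), so that \(3s_1=12\), and to combine three ingredients: hyperbolicity from Lemma~\ref{lem:hyperbolic-V-3-3s1-2p-2pplus2}, a count of components from the induced permutation, and item (1) of Theorem~\ref{thm:V-not-W-crossing-obstruction}.

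\textbf{Hyperbolicity.} For every prime \(p\geq 7\), the hypotheses of Lemma~\ref{lem:hyperbolic-V-3-3s1-2p-2pplus2} hold, since \(p\geq 3\) is prime and \(s_1=4>3\). Hence \(V((3,12),(2p,2p+2))\) is hyperbolic. Being a \(V\)-link, it is a Lorenz link by \cite{de2024lorenz}.

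\textbf{Two components.} The defining braid has \(2p\) strands and is
\[
(\sigma_1\sigma_2)^{12}(\sigma_1\sigma_2\cdots\sigma_{2p-1})^{2p+2}.
\]
Since \((\sigma_1\sigma_2)^{12}\) is a power of the full twist on three strands, it induces the identity permutation. The factor \(\sigma_1\cdots\sigma_{2p-1}\) induces a \(2p\)-cycle, so its \((2p+2)\)-th power acts as a shift by \(2\) modulo \(2p\). This shift has \(\gcd(2p,2)=2\) cycles, so the closure has exactly two components.

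\textbf{Not a \(W\)-link.} Here the braid index is \(2p\), \(q=2p+2\), \(r_1=3\) and the exponent is \(12\). Therefore
\[
E(V)=(q-2p)(2p-1)+12\cdot 2=4p+22.
\]
Item (1) of Theorem~\ref{thm:V-not-W-crossing-obstruction}, applied with \(2p\) in place of \(p\), requires
\[
4p+22<\frac{(2p-1)(2p-2)}{2}=(2p-1)(p-1).
\]
This is equivalent to \(2p^2-7p-21>0\). At \(p=7\) the left side equals \(28>0\), and it is increasing for \(p\geq 2\), so the inequality holds for all \(p\geq 7\). It fails at \(p=5\), which explains the restriction to \(p\geq 7\). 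Hence the link is not a \(W\)-link.

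\textbf{Not horseshoe, not R\"ossler.} By Theorem~\ref{thm:minimal-horseshoe-W}, every horseshoe link has a minimal \(W\)-braid representative. Deleting boundary-parallel trivial components is harmless here: the link is hyperbolic, so it is non-split and has no such components. Thus the link is not a horseshoe link, and hence not a R\"ossler link.

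\textbf{Infinitely many.} The defining \(V\)-braid is minimal by \cite[Theorem~3.9]{de2024lorenz}, so the braid index of the link is \(2p\). Distinct primes therefore give distinct links, yielding infinitely many examples.

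The only step needing real care is the numerical inequality together with the component count; both reduce to the elementary checks above.
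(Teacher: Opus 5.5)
Your proposal is correct and follows essentially the same route as the paper. Both arguments get hyperbolicity from Lemma~\ref{lem:hyperbolic-V-3-3s1-2p-2pplus2} with \(s_1=4\), count components via the induced permutation, and apply item (1) of Theorem~\ref{thm:V-not-W-crossing-obstruction} using \(E(V)=4p+22<(2p-1)(p-1)\). They then conclude with Theorem~\ref{thm:minimal-horseshoe-W}, and get distinctness from braid index \(2p\). Your one addition is to note that a hyperbolic link is non-split, so no boundary-parallel trivial components have to be discarded before invoking Theorem~\ref{thm:minimal-horseshoe-W}; the paper leaves this implicit.
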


\begin{proof}
By Lemma~\ref{lem:hyperbolic-V-3-3s1-2p-2pplus2}, applied with \(s_1=4\), the
\(V\)-link
\[
V((3,12),(2p,2p+2))
\]
is hyperbolic for every prime \(p\geq 7\). Since \(V\)-links are Lorenz links,
this gives a hyperbolic Lorenz link.

We now show that this link has two components. Its defining braid is
\[
(\sigma_1\sigma_2)^{12}
(\sigma_1\sigma_2\cdots\sigma_{2p-1})^{2p+2}.
\]
The factor \((\sigma_1\sigma_2)^{12}\) induces the identity permutation,
because \(\sigma_1\sigma_2\) induces a \(3\)-cycle and \(12\) is divisible by
\(3\). Hence the number of components is determined by the permutation induced
by
\[
(\sigma_1\sigma_2\cdots\sigma_{2p-1})^{2p+2}.
\]
The braid \(\sigma_1\sigma_2\cdots\sigma_{2p-1}\) induces a \(2p\)-cycle. Since
\[
\gcd(2p,2p+2)=2,
\]
the induced permutation has two cycles. Therefore
\[
V((3,12),(2p,2p+2))
\]
has two components.

It remains to prove that this link is not a horseshoe link. Let
\[
V=V((3,12),(2p,2p+2)).
\]
In the notation of Theorem~\ref{thm:V-not-W-crossing-obstruction}, the number
of strands is \(2p\), and
\[
q=2p+2,\qquad r_1=3,\qquad s_1=12.
\]
Thus
\[
E(V)
=
(q-2p)(2p-1)+s_1(r_1-1)
=
2(2p-1)+12(3-1)
=
4p+22.
\]
On the other hand,
\[
\frac{(2p-1)(2p-2)}{2}
=
(2p-1)(p-1).
\]
For every prime \(p\geq 7\), we have
\[
4p+22<(2p-1)(p-1).
\]
Therefore
\[
E(V)<\frac{(2p-1)(2p-2)}{2}.
\]
By Theorem~\ref{thm:V-not-W-crossing-obstruction}, \(V\) is not a \(W\)-link.

Finally, by Theorem~\ref{thm:minimal-horseshoe-W}, every horseshoe link admits
a minimal \(W\)-braid representative. Therefore, if \(V\) could be embedded in
the horseshoe template, then it would be a \(W\)-link, contradicting what we
have just proved. Thus
\[
V((3,12),(2p,2p+2))
\]
is not a horseshoe link, and hence not a R\"ossler link.

Since there are infinitely many primes \(p\geq 7\), this gives infinitely many
examples. Moreover, these links are pairwise distinct: their defining
\(V\)-braids are minimal braid representatives and have \(2p\) strands. Hence
their braid indices are equal to \(2p\), and different values of \(p\) give
different links.
\end{proof}

\subsection{Satellite Lorenz links which are not horseshoe links}

We next obtain satellite examples.

\begin{lemma}\label{lem:V-2-s1-p-pplus2-satellite}
Let \(p\geq 4\) be even, and let \(s_1\) be a positive integer. Then the
\(V\)-link
\[
V((2,s_1),(p,p+2))
\]
is a satellite link.
\end{lemma}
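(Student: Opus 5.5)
The plan is to exhibit an essential torus directly from the braid, using the fact that the twist exponent \(p+2\) is divisible by \(2\), the width of the increasing block. The defining braid is
\[
(\sigma_1)^{s_1}(\sigma_1\sigma_2\cdots\sigma_{p-1})^{p+2}
=(\sigma_1)^{s_1}\,\Delta_p^2\,(\sigma_1\sigma_2\cdots\sigma_{p-1})^{2},
\]
where \(\Delta_p^2=(\sigma_1\cdots\sigma_{p-1})^p\) is central. Since \(p\) is even, \((\sigma_1\sigma_2\cdots\sigma_{p-1})^2\) sends strand \(i\) to \(i+2\) modulo \(p\). Hence it permutes the \(p/2\) consecutive pairs of strands \(\{1,2\},\{3,4\},\dots,\{p-1,p\}\) cyclically, and it is a cabling-type braid. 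Concretely, the standard identity \((\sigma_1\cdots\sigma_{p-1})^{p+2}\) equals the \(2\)-strand cable of the torus braid of \(T(p/2,p/2+1)\) (ribbons of width two), followed by a local \(\sigma_1\)-type twist inside one ribbon. I would verify this either by drawing the ribbon picture or by citing the known description of the torus link \(T(p,p+2)\) as a cable/satellite braid, as in \cite{de2024lorenz}. The factor \(\sigma_1^{s_1}\) acts only inside the first ribbon, so it can be slid into that ribbon.

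The link therefore lies in a solid torus \(N\) which is a regular neighbourhood of the closure of the \(p/2\)-strand braid. That closure is a torus knot \(T(p/2,p/2+1)\), since \(\gcd(p/2,p/2+1)=1\). Inside \(N\) the link is the closure of a \(2\)-strand braid \(\sigma_1^{s_1+c}\) in the pattern solid torus, where \(c\) counts the extra twisting contributed by the cable, namely the framing correction. It winds \(p/2\) times around the core of \(N\) counted via the companion, and twice around the pattern torus.

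It remains to show that \(\partial N\) is essential in the complement. The companion \(T(p/2,p/2+1)\) is nontrivial because \(p/2\geq 2\), so \(\partial N\) is incompressible on the outside. On the inside, the pattern is a \(2\)-braid with winding number \(2\), so it is not contained in a ball in \(N\). It is also not isotopic to the core, because a \(2\)-strand closed braid with positive winding \(2\) is never isotopic to the core, the winding number \(2\neq 1\) ruling this out. Thus \(\partial N\) is incompressible and not boundary parallel, and the link is a satellite.

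The main obstacle is the precise braid identity decomposing \((\sigma_1\cdots\sigma_{p-1})^{p+2}\) into a cable with a controlled twist. I would prove it by noting that \((\sigma_1\cdots\sigma_{p-1})^2\) equals the \(2\)-cabled generator \((\sigma_1\cdots\sigma_{p/2-1})\) with the first pair twisted by one half-twist, and that \(\Delta_p^2\) is the \(2\)-cable of \(\Delta_{p/2}^2\) times a full twist in each ribbon. Alternatively, I would simply invoke the satellite criterion for such \(V\)-links from \cite{de2024lorenz} if it applies. In the degenerate case where \(s_1+c\) makes the pattern a \(2\)-component parallel, the torus remains essential by the winding argument above.
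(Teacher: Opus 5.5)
Your argument is correct in substance, but it takes a different route from the paper, which proves the lemma in one line by citing \cite[Theorem~6.1]{de2024lorenz}. You instead exhibit the satellite structure directly. Write $\tau_i$ for the generators of $B_{p/2}$. For $p$ even, $(\sigma_1\cdots\sigma_{p-1})^{p+2}$ is the $2$-cable of $(\tau_1\cdots\tau_{p/2-1})^{p/2+1}$, with full twists inserted in the ribbons $\{2k-1,2k\}$. Since $\sigma_1^{s_1}$ lives in the ribbon $\{1,2\}$, the link is a closed $2$-braid in a regular neighbourhood of $T(p/2,p/2+1)$. Your route gives explicit companion and pattern data, and the component count of Lemma~\ref{lem:V-2-s1-p-pplus2-components} as a by-product. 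The citation is shorter and covers a much broader class of T-links at once.

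Two details need correcting, though neither affects the conclusion.

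First, $(\sigma_1\cdots\sigma_{p-1})^2$ is the cabled generator times a \emph{full} twist $\sigma_1^2$, not a half twist. Its permutation shifts every strand by $2$ modulo $p$, so it preserves the order inside each pair. For instance, $(\sigma_1\sigma_2\sigma_3)^2=\sigma_1^2\,\sigma_2\sigma_1\sigma_3\sigma_2$. Adding the $p/2$ ribbon twists coming from $\Delta_p^2$, the pattern is the closure of $\sigma_1^{s_1+p+2}$ with respect to the blackboard framing of the band. Its parity matches the component count, whereas a half twist would give the opposite parity.

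Second, the two-component case is not degenerate: it is exactly the case $s_1$ even. There each component has winding number $1$, so ``$2\neq 1$'' is not why $\partial N$ is essential. The correct reasons are these. The total winding number is $2$, so no meridian disk of $N$ misses the link, which gives incompressibility inside. Also, $N$ minus a neighbourhood of the link has three boundary tori, so it is not a product region and $\partial N$ is not boundary parallel.
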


\begin{proof}
This follows from \cite[Theorem~6.1]{de2024lorenz}.
\end{proof}

\begin{lemma}\label{lem:V-2-s1-p-pplus2-components}
Let \(p\geq 4\) be even, and let \(s_1\) be a positive integer. Then the
\(V\)-link
\[
V((2,s_1),(p,p+2))
\]
is a knot if \(s_1\) is odd, and has two components if \(s_1\) is even.
\end{lemma}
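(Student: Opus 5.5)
We count components via the permutation induced by the defining \(V\)-braid, as in the proofs of Lemma~\ref{lem:hyperbolic-V-2-s1-p-pplus1} and Theorem~\ref{thm:infinitely-many-hyperbolic-Lorenz-links-not-horseshoe}. The defining braid is
\[
(\sigma_1)^{s_1}(\sigma_1\sigma_2\cdots\sigma_{p-1})^{p+2}.
\]
The number of components of its closure equals the number of cycles of the induced permutation in the symmetric group on \(\{1,\dots,p\}\).

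First we compute the permutation of the torus part. The braid \(\sigma_1\sigma_2\cdots\sigma_{p-1}\) induces a \(p\)-cycle \(c\). Since \(c^p\) is the identity, \((\sigma_1\cdots\sigma_{p-1})^{p+2}\) induces \(c^2\). Because \(p\) is even, \(\gcd(p,2)=2\). Hence \(c^2\) splits into two cycles of length \(p/2\): one consists of the positions of one parity and the other of the positions of the opposite parity. Adjacent positions \(1\) and \(2\) therefore lie in different cycles of \(c^2\).

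Next we compose with \((\sigma_1)^{s_1}\), which induces the transposition \(\tau=(1\,2)^{s_1}\).
\begin{itemize}
\item If \(s_1\) is even, then \(\tau\) is the identity. The total permutation is \(c^2\), which has two cycles, so the closure has two components.
\item If \(s_1\) is odd, then \(\tau=(1\,2)\). We use the standard fact that multiplying a permutation by a transposition whose two points lie in different cycles merges those cycles. Since \(1\) and \(2\) lie in different cycles of \(c^2\), the product \(\tau c^2\) is a single \(p\)-cycle, and the closure is a knot.
\end{itemize}
This fact holds for either order of composition, so the order convention is irrelevant.

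The only point needing care is the claim that \(1\) and \(2\) lie in different cycles of \(c^2\). Writing \(c\) as \(i\mapsto i+1 \pmod p\) (or \(i-1\), depending on convention), \(c^2\) shifts by \(\pm 2\). Since \(p\) is even, it preserves parity classes, so its two orbits are exactly the odd positions and the even positions. This settles the claim, and everything else is routine.
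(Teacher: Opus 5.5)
Your proof is correct and follows essentially the same argument as the paper. Both compute that the torus block induces \(c^{p+2}=c^2\), which for even \(p\) splits into the odd-position and even-position cycles, and then note that \((\sigma_1)^{s_1}\) induces either the identity or \((1\,2)\), which merges the two cycles when \(s_1\) is odd; your explicit check that \(c\) is a shift by \(\pm 1\) (so the orbits of \(c^2\) are the parity classes) is a detail the paper leaves implicit.
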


\begin{proof}
This component count also follows from the general results on twisted torus
links in \cite{2025twisted}. For completeness, we give a direct proof in this
special case.

The defining braid is
\[
(\sigma_1)^{s_1}(\sigma_1\sigma_2\cdots\sigma_{p-1})^{p+2}.
\]
Let \(c\) be the \(p\)-cycle induced by
\[
\sigma_1\sigma_2\cdots\sigma_{p-1}.
\]
Since \(p\) is even, we have
\[
c^{p+2}=c^2,
\]
and \(c^2\) has two cycles, one on the odd-numbered strands and one on the
even-numbered strands.

If \(s_1\) is even, then \((\sigma_1)^{s_1}\) induces the identity
permutation. Hence the induced permutation has two cycles, and the closure has
two components.

If \(s_1\) is odd, then \((\sigma_1)^{s_1}\) induces the transposition
\((1\,2)\). The strands \(1\) and \(2\) lie in the two different cycles of
\(c^2\). Multiplying by this transposition merges the two cycles into one.
Hence the induced permutation has one cycle, and the closure is a knot.
\end{proof}

\begin{theorem}\label{thm:satellite-Lorenz-not-Rossler}
Let \(p\geq 4\) be even, and let \(s_1\) be a positive integer. Then
\[
V((2,s_1),(p,p+2))
\]
is a satellite Lorenz link. Moreover, the following hold.

\begin{enumerate}
\item If \(s_1\) is odd and
\[
s_1<\frac{(p-1)(p-4)}{2},
\]
then \(V((2,s_1),(p,p+2))\) is a satellite Lorenz knot which is not a
\(W\)-link. In particular, it cannot be embedded in the horseshoe template,
and hence it is not a R\"ossler link.

\item If \(s_1\) is even and
\[
s_1<\frac{(p-1)(p-6)}{2},
\]
then \(V((2,s_1),(p,p+2))\) is a two-component satellite Lorenz link which is
not a \(W\)-link. In particular, it cannot be embedded in the horseshoe
template, and hence it is not a R\"ossler link.
\end{enumerate}
\end{theorem}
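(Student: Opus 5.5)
The plan is to combine the satellite and component-count lemmas with the crossing-number obstruction, Theorem~\ref{thm:V-not-W-crossing-obstruction}, in the same way as in the hyperbolic case. First, Lemma~\ref{lem:V-2-s1-p-pplus2-satellite} says that \(V((2,s_1),(p,p+2))\) is a satellite link for every even \(p\geq 4\) and every positive \(s_1\). Since \(V\)-links are Lorenz links, it is a satellite Lorenz link. Next, Lemma~\ref{lem:V-2-s1-p-pplus2-components} settles the number of components: the closure is a knot when \(s_1\) is odd and has two components when \(s_1\) is even. This gives the topological type claimed in items (1) and (2).

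Then I compute the obstruction quantity. In the notation of Theorem~\ref{thm:V-not-W-crossing-obstruction} we have \(q=p+2\), \(r_1=2\), and there are no decreasing blocks. Hence
\[
E(V)=(q-p)(p-1)+s_1(r_1-1)=2(p-1)+s_1 .
\]

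In case (1) the closure is a knot, so I use the knot criterion \(E(V)<p(p-1)/2\). This becomes
\[
s_1<\frac{p(p-1)}{2}-2(p-1)=\frac{(p-1)(p-4)}{2},
\]
which is exactly the hypothesis. In case (2) I use the general criterion \(E(V)<(p-1)(p-2)/2\). This becomes
\[
s_1<\frac{(p-1)(p-2)}{2}-2(p-1)=\frac{(p-1)(p-6)}{2},
\]
again exactly the hypothesis. In both cases Theorem~\ref{thm:V-not-W-crossing-obstruction} shows that \(V\) is not a \(W\)-link.

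Finally, I convert "not a \(W\)-link" into "not a horseshoe link," as in the proof of Theorem~\ref{thm:hyperbolic-V-2-s1-p-pplus1-not-W}. By Theorem~\ref{thm:minimal-horseshoe-W}, every horseshoe link, after deleting boundary-parallel trivial components, has a minimal \(W\)-braid representative. A satellite link has no split unknotted components to discard, because the positive braid closure \(V\) is non-split. So if \(V\) were a horseshoe link it would be a \(W\)-link, which we have ruled out. By the identification of R\"ossler links with horseshoe links, \(V\) is therefore not a R\"ossler link.

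No step here is a real obstacle: the geometric input is entirely contained in the cited satellite lemma, and the rest is bookkeeping. The points that need care are the two algebraic simplifications above, and noting that the constraint \(2\leq r_1=2<p\) holds because \(p\geq 4\). I would also remark that in case (2) the hypothesis can only be satisfied when \(p\geq 8\), since \(s_1\) is positive. Should infinitely many examples be wanted, choosing \(s_1=1\) or \(s_1=2\) for all large even \(p\) gives pairwise distinct links, because their minimal defining braids have \(p\) strands and so their braid indices are \(p\).
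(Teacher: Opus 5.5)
Your proposal is correct and follows the paper's proof step for step. You use the satellite and component-count lemmas, compute \(E(V)=2(p-1)+s_1\), apply the knot criterion in case (1) and the general criterion in case (2), and then use Theorem~\ref{thm:minimal-horseshoe-W} to pass from ``not a \(W\)-link'' to ``not a horseshoe/R\"ossler link.'' Your added observation that the positive braid closure \(V\) is non-split, so no boundary-parallel trivial components need to be discarded, makes that final step slightly more careful than the paper's own write-up.
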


\begin{proof}
By Lemma~\ref{lem:V-2-s1-p-pplus2-satellite}, the link
\[
V((2,s_1),(p,p+2))
\]
is a satellite link. Since \(V\)-links are Lorenz links, it is a satellite
Lorenz link.

By Lemma~\ref{lem:V-2-s1-p-pplus2-components}, this \(V\)-link is a knot when
\(s_1\) is odd and has two components when \(s_1\) is even.

It remains to prove that it is not a \(W\)-link under the stated hypotheses.
Let
\[
V=V((2,s_1),(p,p+2)).
\]
Then
\[
q=p+2,\qquad r_1=2.
\]
Thus, in the notation of Theorem~\ref{thm:V-not-W-crossing-obstruction},
\[
E(V)
=
(q-p)(p-1)+s_1(r_1-1)
=
2(p-1)+s_1.
\]

Suppose first that \(s_1\) is odd and
\[
s_1<\frac{(p-1)(p-4)}{2}.
\]
Then
\[
E(V)
=
2(p-1)+s_1
<
2(p-1)+\frac{(p-1)(p-4)}{2}
=
\frac{p(p-1)}{2}.
\]
Since \(V\) is a knot, the knot case of
Theorem~\ref{thm:V-not-W-crossing-obstruction} implies that \(V\) is not a
\(W\)-link.

Now suppose that \(s_1\) is even and
\[
s_1<\frac{(p-1)(p-6)}{2}.
\]
Then
\[
E(V)
=
2(p-1)+s_1
<
2(p-1)+\frac{(p-1)(p-6)}{2}
=
\frac{(p-1)(p-2)}{2}.
\]
Therefore, by Theorem~\ref{thm:V-not-W-crossing-obstruction}, \(V\) is not a
\(W\)-link.

Finally, by Theorem~\ref{thm:minimal-horseshoe-W}, if \(V\) could be embedded
in the horseshoe template, then it would admit a \(W\)-braid representative.
This contradicts what we have just proved. Hence \(V\) is not a horseshoe
link, and therefore it is not a R\"ossler link.
\end{proof}

\begin{corollary}\label{cor:infinitely-many-satellite-Lorenz-not-Rossler}
There are infinitely many satellite Lorenz knots which are not R\"ossler links.
More precisely, for every even integer \(p\geq 6\), the knot
\[
V((2,1),(p,p+2))
\]
is a satellite Lorenz knot which cannot be embedded in the horseshoe template.
\end{corollary}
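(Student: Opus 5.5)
The corollary follows by specializing Theorem~\ref{thm:satellite-Lorenz-not-Rossler} to \(s_1=1\). Fix an even integer \(p\geq 6\). Since \(s_1=1\) is odd, Lemma~\ref{lem:V-2-s1-p-pplus2-components} shows that \(V((2,1),(p,p+2))\) is a knot. Lemma~\ref{lem:V-2-s1-p-pplus2-satellite} shows that it is a satellite link, so it is a satellite Lorenz knot. To apply item (1) of Theorem~\ref{thm:satellite-Lorenz-not-Rossler}, it remains to check
\[
1<\frac{(p-1)(p-4)}{2}.
\]
For \(p=6\) the right-hand side is \(5\cdot 2/2=5\). The right-hand side is increasing in \(p\) for \(p\geq 4\), so the inequality holds for every even \(p\geq 6\).

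Item (1) then gives that \(V((2,1),(p,p+2))\) is not a \(W\)-link. By Theorem~\ref{thm:minimal-horseshoe-W}, every horseshoe link admits a minimal \(W\)-braid representative, so this knot cannot be embedded in the horseshoe template. Since R\"ossler links are identified with horseshoe links in our setting, it is not a R\"ossler link either.

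For infinitude, I would argue as in Corollary~\ref{cor:infinitely-many-hyperbolic-V-not-W}. By \cite[Theorem~3.9]{de2024lorenz}, the defining \(V\)-braid on \(p\) strands is minimal, so the knot has braid index \(p\). Distinct even values of \(p\) therefore give distinct knots.

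The only point needing care is the boundary case \(p=6\). There the bound \(1<5\) holds, so the knot crossing inequality \(E(V)=2(p-1)+1=11<15=p(p-1)/2\) is satisfied. Note that the link threshold \((p-1)(p-2)/2=10\) would fail here, so we genuinely need the knot case of Theorem~\ref{thm:V-not-W-crossing-obstruction}, which is legitimate because \(s_1=1\) is odd.
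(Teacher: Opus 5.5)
Your proposal is correct and follows the paper's proof essentially verbatim. Like the paper, you specialize Theorem~\ref{thm:satellite-Lorenz-not-Rossler} to $s_1=1$, verify $1<(p-1)(p-4)/2$ for even $p\geq 6$, and distinguish the knots by their braid index $p$, which comes from minimality of the defining $V$-braids. Your added observation is also accurate: at $p=6$ you have $E(V)=11$, which exceeds the link threshold $10$, so the knot case of the obstruction is genuinely needed.
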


\begin{proof}
Take \(s_1=1\) in Theorem~\ref{thm:satellite-Lorenz-not-Rossler}. Since
\(s_1\) is odd, the link
\[
V((2,1),(p,p+2))
\]
is a knot. Moreover, for every even integer \(p\geq 6\), we have
\[
1<\frac{(p-1)(p-4)}{2}.
\]
Therefore Theorem~\ref{thm:satellite-Lorenz-not-Rossler} implies that
\[
V((2,1),(p,p+2))
\]
is a satellite Lorenz knot which is not a horseshoe link, and hence not a
R\"ossler link.

These knots are pairwise distinct as \(p\) varies. Indeed, their defining
\(V\)-braids have \(p\) strands and are minimal braid representatives. Hence
their braid indices are equal to \(p\). Therefore different values of \(p\)
give knots with different braid indices.
\end{proof}

\subsection{A sublink obstruction}

Finally, we observe that the obstruction is inherited by superlinks.

\begin{proposition}\label{prop:sublink-obstruction-horseshoe}
Let \(L\subset S^3\) be a link. Suppose that \(L\) contains a sublink
\(L'\subset L\) which is not a horseshoe link. Then \(L\) is not a horseshoe
link. Consequently, since R\"ossler links and horseshoe links are carried by
equivalent templates, if \(L'\) is not a R\"ossler link, then any link
containing \(L'\) as a sublink is not a R\"ossler link.
\end{proposition}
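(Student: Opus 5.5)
The plan is to argue by contraposition, using only the definition of a horseshoe link as a link realized by a finite collection of periodic orbits on the horseshoe template. Suppose \(L\) were a horseshoe link. Then there is a finite collection \(\gamma_1,\dots,\gamma_N\) of periodic orbits of the semiflow on the horseshoe template whose union, embedded in \(S^3\), is ambient isotopic to \(L\).

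Fix such an isotopy. It induces a bijection between the components of \(L\) and the orbits \(\gamma_1,\dots,\gamma_N\). The sublink \(L'\) is a union of some subset of the components of \(L\). Let \(J\subset\{1,\dots,N\}\) index the orbits corresponding to those components. Restricting the same ambient isotopy to these components shows that \(L'\) is ambient isotopic to \(\bigcup_{j\in J}\gamma_j\). This union is again a finite collection of periodic orbits on the horseshoe template, so \(L'\) is carried by the horseshoe template, contradicting the hypothesis. Hence \(L\) is not a horseshoe link.

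One point needs care: the boundary-parallel convention of Remark~\ref{rem:boundary-parallel-components}. If \(L\) is realized by orbits some of which are boundary-parallel trivial components, then \(L'\) is realized by a subcollection of these orbits, so the conclusion is unaffected. The only possible subtlety is a degenerate situation where \(L'\) consists solely of such components. In that case \(L'\) is an unlink, which is itself carried by the template. So the argument goes through under either reading of the convention, and I would state this explicitly in one line.

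For the R\"ossler statement, I would invoke the identification of the R\"ossler template with the horseshoe template up to inversion symmetry, as adopted throughout the paper. Under this identification, R\"ossler links and horseshoe links coincide as a class, and the previous paragraphs apply verbatim.

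The main obstacle is mostly bookkeeping. One must be careful to phrase the argument at the level of template orbits, rather than \(W\)-braids or \(T^1\)-braids. Deleting strands of a braid does not obviously preserve the \(W\)-form, but deleting orbits from a template-carried collection trivially preserves being carried.
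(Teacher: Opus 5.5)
Your proposal is correct and follows the same route as the paper: argue by contraposition, observe that deleting components from a link realized by periodic orbits on the horseshoe template leaves a link still realized on that template, and transfer the conclusion to R\"ossler links via the identification of the two templates. Your extra bookkeeping about the ambient isotopy and boundary-parallel components is harmless, and it is slightly more careful than the paper's one-line version of the same argument.
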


\begin{proof}
We prove the contrapositive. Suppose that \(L\) is a horseshoe link. Then
\(L\) can be embedded in the horseshoe template. Since \(L'\) is a sublink of
\(L\), it is obtained by deleting some components of \(L\). Deleting components
from a link embedded in a template leaves a link still embedded in the same
template. Hence \(L'\) can also be embedded in the horseshoe template.
Therefore \(L'\) is a horseshoe link.

Thus, if \(L'\) is not a horseshoe link, then \(L\) cannot be a horseshoe link.
Since R\"ossler links and horseshoe links are carried by equivalent templates,
the same argument gives the corresponding statement for R\"ossler links.
\end{proof}

\begin{corollary}\label{cor:more-Lorenz-not-horseshoe}
Let \(L\) be a Lorenz link. Suppose that \(L\) contains a sublink isotopic to
one of the Lorenz links constructed above which is not a horseshoe link. Then
\(L\) is not a horseshoe link, and hence it is not a R\"ossler link.
\end{corollary}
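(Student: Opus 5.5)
This corollary follows by combining Proposition~\ref{prop:sublink-obstruction-horseshoe} with the examples constructed earlier in this section. Let \(L'\subset L\) be the sublink isotopic to one of the Lorenz links shown not to be horseshoe links. These are the torus links excluded by Proposition~\ref{prop:torus-link-crossing-obstruction}, the hyperbolic knots of Theorem~\ref{thm:hyperbolic-V-2-s1-p-pplus1-not-W}, the hyperbolic links of Theorem~\ref{thm:infinitely-many-hyperbolic-Lorenz-links-not-horseshoe}, the satellite links of Theorem~\ref{thm:satellite-Lorenz-not-Rossler}, or any \(V\)-link satisfying the hypothesis of Theorem~\ref{thm:V-not-W-crossing-obstruction}. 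By the cited result, \(L'\) is not a \(W\)-link. Since Theorem~\ref{thm:minimal-horseshoe-W} shows that every horseshoe link admits a minimal \(W\)-braid representative, \(L'\) is not a horseshoe link.

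The only point needing care is that being a horseshoe link depends only on the isotopy class. The sublink of \(L\) is merely \emph{isotopic} to the constructed example, but both the horseshoe property and the \(W\)-link property are defined up to ambient isotopy. So the actual sublink of \(L\) is also not a horseshoe link.

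We also check compatibility with the convention of Remark~\ref{rem:boundary-parallel-components}. Each constructed example has braid index at least \(2\) and is nonsplit, being the closure of a positive braid containing a full twist. It therefore contains no boundary-parallel trivial components that could be discarded, and the convention does not interfere.

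Applying Proposition~\ref{prop:sublink-obstruction-horseshoe} with this \(L'\) now shows that \(L\) is not a horseshoe link. The R\"ossler statement follows because the paper treats R\"ossler links and horseshoe links as carried by the same template class, which is the second sentence of that proposition.

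The hypothesis that \(L\) is Lorenz is not used in the argument; it only places the conclusion in context. There is no real obstacle: this is a direct combination of the proposition with the earlier constructions.
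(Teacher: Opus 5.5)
Your proposal is correct and follows the paper's argument: the paper's proof is that the given sublink is not a horseshoe link, so Proposition~\ref{prop:sublink-obstruction-horseshoe} applies directly. Your additional checks, namely isotopy invariance and that the boundary-parallel convention is harmless because each example is non-split with braid index at least \(2\), are sound and only make explicit what the paper leaves implicit.
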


\begin{proof}
The given sublink is not a horseshoe link. Therefore the result follows
immediately from Proposition~\ref{prop:sublink-obstruction-horseshoe}.
\end{proof}

\bibliographystyle{amsplain}  

\bibliography{References}

@article {unexpected,
    AUTHOR = {de Paiva, Thiago},
     TITLE = {Unexpected essential surfaces among exteriors of twisted torus
              knots},
   JOURNAL = {Algebr. Geom. Topol.},
  FJOURNAL = {Algebraic \& Geometric Topology},
    VOLUME = {22},
      YEAR = {2022},
    NUMBER = {8},
     PAGES = {3965--3982},
      ISSN = {1472-2747,1472-2739},
   MRCLASS = {57K10 (57K35)},
  MRNUMBER = {4562562},
       DOI = {10.2140/agt.2022.22.3965},
       URL = {https://doi.org/10.2140/agt.2022.22.3965},
}

@article {twofulltwists,
    AUTHOR = {de Paiva, Thiago},
     TITLE = {Hyperbolic knots given by positive braids with at least two
              full twists},
   JOURNAL = {Proc. Amer. Math. Soc.},
  FJOURNAL = {Proceedings of the American Mathematical Society},
    VOLUME = {150},
      YEAR = {2022},
    NUMBER = {12},
     PAGES = {5449--5458},
      ISSN = {0002-9939,1088-6826},
   MRCLASS = {57K10 (20F36 57K32)},
  MRNUMBER = {4494619},
MRREVIEWER = {L.\ Neuwirth},
       DOI = {10.1090/proc/16035},
       URL = {https://doi.org/10.1090/proc/16035},
}

@article {de2021satellites,
    AUTHOR = {de Paiva, Thiago and Purcell, Jessica S.},
     TITLE = {Satellites and {L}orenz knots},
   JOURNAL = {Int. Math. Res. Not. IMRN},
  FJOURNAL = {International Mathematics Research Notices. IMRN},
      YEAR = {2023},
    NUMBER = {19},
     PAGES = {16540--16573},
      ISSN = {1073-7928,1687-0247},
   MRCLASS = {57K10},
  MRNUMBER = {4651895},
       DOI = {10.1093/imrn/rnac335},
       URL = {https://doi.org/10.1093/imrn/rnac335},
}

@article {dePaivaPurcell2024,
    AUTHOR = {de Paiva, Thiago and Purcell, Jessica S.},
     TITLE = {Hyperbolic and satellite {L}orenz links obtained by twisting},
   JOURNAL = {Michigan Math. J.},
  FJOURNAL = {Michigan Mathematical Journal},
      YEAR = {2024},
    VOLUME = {73},
    NUMBER = {2},
     PAGES = {379--404},
      ISSN = {0026-2285},
   MRCLASS = {57K10 (57K32 57M25)},
       DOI = {10.1307/mmj/20246510},
       URL = {https://doi.org/10.1307/mmj/20246510}
}

@article {newtwis,
    AUTHOR = {Birman, Joan and Kofman, Ilya},
     TITLE = {A new twist on {L}orenz links},
   JOURNAL = {J. Topol.},
  FJOURNAL = {Journal of Topology},
    VOLUME = {2},
      YEAR = {2009},
    NUMBER = {2},
     PAGES = {227--248},
      ISSN = {1753-8416},
   MRCLASS = {57M25 (57M27 57M50)},
  MRNUMBER = {2529294},
MRREVIEWER = {Daniel Matei},
       DOI = {10.1112/jtopol/jtp007},
       URL = {https://doi-org.ezproxy.lib.monash.edu.au/10.1112/jtopol/jtp007},
}

@article {de2022torus,
    AUTHOR = {de Paiva, Thiago},
     TITLE = {Torus {L}orenz links obtained by full twists along torus
              links},
   JOURNAL = {Proc. Amer. Math. Soc.},
  FJOURNAL = {Proceedings of the American Mathematical Society},
    VOLUME = {151},
      YEAR = {2023},
    NUMBER = {6},
     PAGES = {2671--2677},
      ISSN = {0002-9939,1088-6826},
   MRCLASS = {57K10 (57K32 57K35)},
  MRNUMBER = {4576328},
       DOI = {10.1090/proc/16261},
       URL = {https://doi.org/10.1090/proc/16261},
}

@article{birman1983knotted,
  title={Knotted periodic orbits in dynamical system. II. Knot holders for fibered knots},
  author={Birman, Joan S and Williams, Robert F},
  journal={Contemporary Mathematics},
  volume={20},
  pages={1--60},
  year={1983},
  publisher={American Mathematical Society}
}

@article {periodicorbits,
    AUTHOR = {Birman, Joan S. and Williams, R. F.},
     TITLE = {Knotted periodic orbits in dynamical systems. {I}. {L}orenz's
              equations},
   JOURNAL = {Topology},
  FJOURNAL = {Topology. An International Journal of Mathematics},
    VOLUME = {22},
      YEAR = {1983},
    NUMBER = {1},
     PAGES = {47--82},
      ISSN = {0040-9383},
   MRCLASS = {58F13 (57M25)},
  MRNUMBER = {682059},
MRREVIEWER = {Hans G. Bothe},
       DOI = {10.1016/0040-9383(83)90045-9},
       URL = {https://doi.org/10.1016/0040-9383(83)90045-9},
}

@article {dePaiva2025,
    AUTHOR = {de Paiva, Thiago},
     TITLE = {Satellite knots that cannot be represented by positive braids with full twists},
   JOURNAL = {New York J. Math.},
  FJOURNAL = {New York Journal of Mathematics},
      YEAR = {2025},
    VOLUME = {31},
     PAGES = {1690--1701},
      ISSN = {1076-9803},
   MRCLASS = {57K10 (57K35 57M25)},
       URL = {https://nyjm.albany.edu/j/2025/31-67.html}
}

@article {MR3801447,
    AUTHOR = {Lee, Sangyop},
     TITLE = {Satellite knots obtained by twisting torus knots:
              hyperbolicity of twisted torus knots},
   JOURNAL = {Int. Math. Res. Not. IMRN},
  FJOURNAL = {International Mathematics Research Notices. IMRN},
      YEAR = {2018},
    NUMBER = {3},
     PAGES = {785--815},
      ISSN = {1073-7928,1687-0247},
   MRCLASS = {57M25},
  MRNUMBER = {3801447},
MRREVIEWER = {Toshio\ Saito},
       DOI = {10.1093/imrn/rnw255},
       URL = {https://doi.org/10.1093/imrn/rnw255},
}

@article{de2024lorenz,
  title={Lorenz links, {T}-links, Minimal Braids, Positive braids with a full twist, and Geometric Types},
  author={de Paiva, Thiago},
  journal={arXiv preprint arXiv:2409.14824},
  year={2024}
}

@article{Volumebounds,
  title={Volumes, {L}orenz-like templates, and braids},
  author={de Paiva, Thiago and Hui, Connie On Yu and Andr\'es Rodr\'iguez Migueles, Jos\'e},
  journal={arXiv preprint arXiv:2410.04391},
  year={2024}
}

@article{2025twisted,
  title={On the number of components of twisted torus links},
  author={Adnan and de Paiva, Thiago and Park, Kyungbae},
  journal={arXiv preprint arXiv:2505.01021},
  year={2025}
}

@misc{KofmanHorseshoeBeamer,
  author       = {Ilya Kofman},
  title        = {Lorenz and horseshoe knots},
  note         = {Lecture slides, Brigham Young University},
  year         = {2013},
  url          = {https://www.math.csi.cuny.edu/~ikofman/horseshoe_beamer_BYU.pdf}
}

@article {Pai1,
    AUTHOR = {de Paiva, Thiago},
     TITLE = {Hyperbolic twisted torus links},
   JOURNAL = {Geom. Dedicata},
  FJOURNAL = {Geometriae Dedicata},
    VOLUME = {217},
      YEAR = {2023},
    NUMBER = {2},
     PAGES = {Paper No. 42, 16},
      ISSN = {0046-5755,1572-9168},
   MRCLASS = {57K10 (57K32)},
  MRNUMBER = {4551666},
MRREVIEWER = {Brandy\ Guntel\ Doleshal},
       DOI = {10.1007/s10711-023-00777-z},
       URL = {https://doi.org/10.1007/s10711-023-00777-z},
}

@article {Franks,
    AUTHOR = {Franks, John and Williams, R. F.},
     TITLE = {Braids and the {J}ones polynomial},
   JOURNAL = {Trans. Amer. Math. Soc.},
  FJOURNAL = {Transactions of the American Mathematical Society},
    VOLUME = {303},
      YEAR = {1987},
    NUMBER = {1},
     PAGES = {97--108},
      ISSN = {0002-9947},
   MRCLASS = {57M25},
  MRNUMBER = {896009},
MRREVIEWER = {Kenneth C. Millett},
       DOI = {10.2307/2000780},
       URL = {https://doi-org.ezproxy.lib.monash.edu.au/10.2307/2000780},
}

@inproceedings {Genus,
    AUTHOR = {Stallings, John R.},
     TITLE = {Constructions of fibred knots and links},
 BOOKTITLE = {Algebraic and geometric topology ({P}roc. {S}ympos. {P}ure
              {M}ath., {S}tanford {U}niv., {S}tanford, {C}alif., 1976),
              {P}art 2},
    SERIES = {Proc. Sympos. Pure Math., XXXII},
     PAGES = {55--60},
 PUBLISHER = {Amer. Math. Soc., Providence, R.I.},
      YEAR = {1978},
   MRCLASS = {57M25},
  MRNUMBER = {520522},
MRREVIEWER = {Kenneth A. Perko, Jr.},
}

@article{HolmesWilliams1985,
  author  = {Holmes, Philip and Williams, Robert F.},
  title   = {Knotted periodic orbits in suspensions of Smale's horseshoe:
             torus knots and bifurcation sequences},
  journal = {Archive for Rational Mechanics and Analysis},
  volume  = {90},
  number  = {2},
  pages   = {115--194},
  year    = {1985}
}

@article{deCarvalhoHall2002Forcing,
  author  = {de Carvalho, Andr{\'e} and Hall, Toby},
  title   = {The forcing relation for horseshoe braid types},
  journal = {Experimental Mathematics},
  volume  = {11},
  number  = {2},
  pages   = {271--288},
  year    = {2002}
}

@article{deCarvalhoHall2010Decoration,
  author  = {de Carvalho, Andr{\'e} and Hall, Toby},
  title   = {Decoration invariants for horseshoe braids},
  journal = {Discrete and Continuous Dynamical Systems},
  volume  = {27},
  number  = {3},
  pages   = {863--906},
  year    = {2010}
}

@article{deCarvalhoHall2003Conjugacies,
  author  = {de Carvalho, Andr{\'e} and Hall, Toby},
  title   = {Conjugacies between horseshoe braids},
  journal = {Nonlinearity},
  volume  = {16},
  number  = {4},
  pages   = {1329--1338},
  year    = {2003}
}

@article{Igra2025Rossler,
  author  = {Igra, Eran},
  title   = {Knots and chaos in the R{\"o}ssler system},
  journal = {Journal of Differential Equations},
  volume  = {437},
  pages   = {113290},
  year    = {2025},
  doi     = {10.1016/j.jde.2025.113290}
}

@article{LetellierGouesbet1996,
  author  = {Letellier, Christophe and Gouesbet, G{\'e}rard},
  title   = {Topological characterization of reconstructed attractors modding
             out symmetries},
  journal = {Journal de Physique II},
  volume  = {6},
  pages   = {1615--1638},
  year    = {1996}
}

@article{RosalieLetellier2013,
  author  = {Rosalie, Martin and Letellier, Christophe},
  title   = {Systematic template extraction from chaotic attractors: {I}.
             Genus-one attractors with an inversion symmetry},
  journal = {Journal of Physics A: Mathematical and Theoretical},
  volume  = {46},
  number  = {37},
  pages   = {375101},
  year    = {2013},
  doi     = {10.1088/1751-8113/46/37/375101}
}

@article{LetellierDutertreMaheu1995,
  author  = {Letellier, Christophe and Dutertre, Pierre and Maheu, Bertrand},
  title   = {Unstable periodic orbits and templates of the {R}\"ossler system:
             toward a systematic topological characterization},
  journal = {Chaos: An Interdisciplinary Journal of Nonlinear Science},
  volume  = {5},
  number  = {1},
  pages   = {271--282},
  year    = {1995},
  doi     = {10.1063/1.166076}
}

\end{document}